\documentclass{amsproc}
\usepackage[matrix,arrow,tips,curve]{xy}
\usepackage[english]{babel}
\usepackage[leqno]{amsmath}
\usepackage{amssymb,amsthm}
\usepackage{amscd}
\usepackage{enumerate}
\usepackage{epsfig}
\usepackage{layout}
\usepackage{fullpage}

\input{xy}
\xyoption{all}

\newtheorem{thm}{Theorem}[subsection]
\newtheorem{lemma}[thm]{Lemma}
\newtheorem{lemmadefi}[thm]{Lemma - Definition}

\newtheorem{factdefi}[thm]{Fact - Definition}
\newtheorem{prop}[thm]{Proposition}

\newtheorem{cor}[thm]{Corollary}

\newtheorem{fact}[thm]{Fact}

\theoremstyle{remark}
\newtheorem{remark}[thm]{Remark}
\theoremstyle{definition}
\newtheorem{defi}[thm]{Definition}

\newtheorem{example}[thm]{Example}

\numberwithin{equation}{section}
\setcounter{tocdepth}{1}

\newcommand{\Q}{\mathbb{Q}}
\newcommand{\Z}{\mathbb{Z}}
\newcommand{\R}{\mathbb{R}}

\newcommand{\CC}{\mathbb{C}}

\newcommand{\C}{\mathcal{C}}
\renewcommand{\P}{\mathbb{P}}

\newcommand{\ov}{\overline}
\newcommand{\un}{\underline}

\newcommand{\wt}{\widetilde}

\newcommand{\Out}{\operatorname{Out}}
\newcommand{\val}{\operatorname{val}}
\newcommand{\Aut}{\operatorname{Aut}}
\newcommand{\Inn}{\operatorname{Inn}}
\newcommand{\supp}{\operatorname{supp}}

\newcommand{\GL}{\operatorname{GL}}
\newcommand{\Sp}{\operatorname{Sp}}

\newcommand{\Tgp}{\ensuremath{\mathcal T^{\text{tr},p}_g}}
\newcommand{\Tg}{\ensuremath{\mathcal T^{\text{tr}}_g}}
\newcommand{\Atrg}{\ensuremath{A^{\text{tr}}_g}}

\newcommand{\Mtrg}{\ensuremath{M^{\text{tr}}_g}}

\newcommand{\hooklongrightarrow}{\lhook\joinrel\longrightarrow}

\newcommand{\Mp}{\ensuremath{M^{\text{tr}, p}}_g}

% Delaunay-Voronoi

\newcommand{\Del}{{\rm Del}}

%Principal cone

\newcommand{\prin}{\sigma_{{\rm prin}}}

% Forme quadratiche

\renewcommand{\O}{\Omega_g}
\newcommand{\Ort}{\Omega_g^{\rm rt}}

\newcommand{\Null}{{\rm Null}}

%Admissible decomposition

\newcommand{\Per}{\Sigma_{\operatorname{P}}}
\newcommand{\Voro}{\Sigma_{\operatorname{V}}}

% melody's abbreviations
\newcommand{\ra}{\rightarrow}
\newcommand{\comment}[1]{} % Make long comments \comment{...}
\newcommand{\al}{\alpha}
\newcommand{\G}{\Gamma}

%Tropical Siegel space and moduli space of abelian varieties

\newcommand{\Hg}{{}^{\Sigma}\mathbb{H}_g^{tr}}
\newcommand{\Hgp}{{}^{\Sigma}\mathbb{H}_g^{tr,p}}

\newcommand{\Ag}{{}^{\Sigma}\!{A}_g^{tr}}
\newcommand{\Agp}{{}^{\Sigma}\!{A}_g^{tr,p}}

%Period-Torelli maps

\newcommand{\Pg}{{}^{\Sigma}{\mathcal P}_g^{tr}}
\newcommand{\Pgp}{{\mathcal P}_g^{tr,p}}

\newcommand{\tg}{{}^{\Sigma}t_g^{tr}}
\newcommand{\tgp}{t_g^{tr,p}}

%Abelianization map

\newcommand{\Ab}{{\mathfrak A}}

%Classical Teichmuller and period map

\newcommand{\Tei}{{\mathcal T}_g}
\newcommand{\Sie}{{\mathbb H}_g}
\newcommand{\Peri}{\mathcal{P}_g}

\begin{document}

\title{Tropical Teichm\"uller and Siegel spaces}

\author[Chan]{Melody Chan}
\address{Department of Mathematics, Harvard University, One Oxford Street, Cambridge MA 02138, USA} \email{mtchan@math.harvard.edu}

\author[Melo]{Margarida Melo}
\address{Departamento de Matem\'atica, Universidade de Coimbra,
Largo D. Dinis, Apartado 3008, 3001 Coimbra, Portugal}
\email{mmelo@mat.uc.pt}

\author[Viviani]{Filippo Viviani}
\address{ Dipartimento di Matematica,
Universit\`a Roma Tre,
Largo S. Leonardo Murialdo 1,
00146 Roma, Italy}
\email{viviani@mat.uniroma3.it}

\maketitle

\begin{abstract}
In this paper, we present a unified study of the moduli space of tropical curves and Outer space which we link via period maps to the moduli space of tropical abelian varieties and the space of positive definite quadratic forms. Our aim is to exhibit Outer space and the space of positive definite quadratic forms as analogues of Teichm\"uller space and Siegel space, respectively, in tropical geometry. All these spaces and the maps among them are described within the category of ideal stacky fans, which we describe in detail.
\end{abstract}

\tableofcontents

\section{Introduction}

The aim of this paper is to develop a tropical analogue of Teichm\"uller space, of Siegel space, and of the period map from the former space to the latter one.  Indeed, in some sense, this analogy has been known in geometric group theory well before the advent of tropical geometry; however, we hope to convince the reader that the tropical viewpoint can bring some new insight into the picture.

\vspace{0,1cm}

For the reader's convenience, we start by first reviewing the classical theory and then explaining the tropical analogy that we want to pursue here.

\subsection{Classical theory: Teichm\"uller, Siegel and the period map}
In this subsection, we give a very short overview of the classical period map from the Teichm\"uller space to the Siegel space,
referring the reader to \cite[Chap. XV]{GAC2} and \cite[Chap. 8, Chap, 11]{BL} for more details and references.

Fix a connected, compact, orientable topological surface $S_g$ of genus $g\geq 2$.\footnote{This assumption on $g$ is made only for simplicity. Indeed, everything we are going to say is trivial for $g=0$ and it can be easily adapted to the case  $g=1$, where everything is much more simple.} Let $x_0\in S_g$ and set $\Pi_g:=\pi_1(S_g,x_0)$. The group $H_1(S_g,\Z)=\Pi_g^{\rm ab}:=\Pi_g/[\Pi_g,\Pi_g]\cong\Z^{2g}$ has a symplectic structure given by the intersection pairing  $(\cdot,\cdot)$ on $S_g$. It is possible to chose a basis of $H_1(S_g,\Z)$ with respect to which $(\cdot,\cdot)$ is given by the standard symplectic form
\begin{equation*}\label{E:sym-form}
 Q=\begin{pmatrix}0 & {\mathbb I}_g\\ -{\mathbb I}_g & 0 \end{pmatrix}.
 \end{equation*}

The {\bf {\em Teichm\"uller space}} $\Tei$ is the fine moduli space of marked Riemann surfaces of genus $g$, i.e. pairs $(C,h)$ where $C$ is a  Riemann surface of genus $g$ and $h\colon S_g\stackrel{\cong}{\longrightarrow} C$ is the homotopy class of an orientation-preserving homeomorphism, called a marking of $C$.
 The space $\Tei$ is a complex manifold of dimension $3g-3$ which is homeomorphic (although not biholomorphic) to the unit ball in $\CC^{3g-3}$ (see \cite[Chap. XV, Thm. (4.1)]{GAC2}). In particular, $\Tei$ is a contractible space.

Consider now the outer automorphism group $\Out(\Pi_g):=\Aut(\Pi_g)/\Inn(\Pi_g)$ of $\Pi_g$. Since $S_g$ is an Eilenberg-MacLane $K(\Pi_g, 1)$-space, every element $\alpha\in \Out(\Pi_g)$ is induced by a homotopy equivalence $S_g\ra S_g$ which, by the Dehn-Nielsen-Baer theorem, we may choose to be a homeomorphism $\alpha_{S_g}:S_g\stackrel{\cong}{\longrightarrow} S_g$.  The map $\alpha_{S_g}$ is unique up to homotopy and we call it the geometric realization of $\alpha$.  The {\em mapping class group} $\Gamma_g$ is the index two subgroup of $\Out(F_g)$ consisting of the elements $\alpha\in \Out(F_g)$ such that the geometric realization $\alpha_{S_g}$ is orientation-preserving.

The mapping class group acts on the Teichm\"uller space $\Tei$ by changing the marking. More precisely, an element $\alpha\in \Gamma_g$ acts on $\Tei$ by sending $(C,h)\in \Tei$ to
$$(C,h)\circ \alpha:=(C, h\circ \alpha_{S_g}).$$
The action of $\Gamma_g$ is properly discontinuous (see \cite[p. 452]{GAC2}) and  the quotient $M_g:=\Tei/\Gamma_g$ is a complex quasi-projective variety which turns out to be
the coarse moduli space of Riemann surfaces of genus $g$.

The {\bf {\em Siegel space}} $\Sie$ is the fine moduli space of marked principally polarized (p.p.~for short) abelian varieties of dimension $g$, i.e.~triples $(V/\Lambda,E,\phi)$ where  $V$ is a complex $g$-dimensional vector space, $\Lambda\subset V$ is a full-dimensional lattice (so that $V/\Lambda$ is a complex torus of dimension $g$), $E$ is a principal polarization on the torus $V/\Lambda$  and  $\phi\colon(\Z^{2g},Q)\stackrel{\cong}{\longrightarrow} (\Lambda, E)$ is a symplectic isomorphism (called a marking of the principally polarized torus $(V/\Lambda, E)$).
Recall that a principal polarization on the complex torus $V/\Lambda$ is a symplectic form $E:V\times V\to \R$ such that:
\begin{enumerate}[(i)]
\item $E(\Lambda,\Lambda)\subset \Z$;
\item $E(iv,iw)=E(v,w)$ for any $v,w\in V$;
\item There exists a symplectic isomorphism $(\Z^{2g},Q)\stackrel{\cong}{\longrightarrow} (\Lambda, E)$.
\end{enumerate}
The space $\Sie$ is a complex  manifold of dimension $\binom{g+1}{2}$ which is moreover contractible (see \cite[Sec. 8.1]{BL}).

The {\em symplectic group} $\Sp_{2g}(\Z):=\Aut(\Z^{2g}, Q)$ acts on $\Sie$ by changing the marking. More precisely, an element $\alpha\in \Sp_{2g}(\Z)$ acts on $\Sie$ by sending $(V/\Lambda,E,\phi)\in \Sie$ into
$$(V/\Lambda,E,\phi)\circ \alpha:=(V/\Lambda, E, \phi \circ \alpha).$$
The action of $\Sp_{2g}(\Z)$ is properly discontinuous (see \cite[Prop. 8.2.5]{BL}) and  the quotient $A_g:=\Sie/\Sp_{2g}(\Z)$ is a complex quasi-projective variety that is
the coarse moduli space of p.p.~abelian varieties of dimension $g$ (see \cite[Thm. 8.2.6]{BL}).

Given a Riemann surface $C$ of genus $g$, denote by $\Omega_C^1$ the sheaf of K\"ahler differentials on $C$; thus elements of  $H^0(C,\Omega_C^1)$ are holomorphic $1$-forms on $C$.
The {\em Jacobian} $J(C)$ of  $C$ is the p.p.~abelian variety of dimension $g$ given by the complex torus $H^0(C,\Omega_C^1)^*/H_1(C,\Z)$ (where
the injective map $H_1(C,\Z)\hookrightarrow H^0(C,\Omega_C^1)^*$ is given by the integration of holomorphic $1$-forms along $1$-cycles) together with the principal polarization $E_C$ coming from the intersection product  on $H_1(C,\Z)$. Any marking $h:S_g\stackrel{\cong}{\longrightarrow} C$ of $C$ gives rise to a marking $\phi_h:(\Z^{2g},Q)=(H_1(S_g,\Z),(\cdot, \cdot))\stackrel{\cong}{\longrightarrow} (H_1(C,\Z), Q_C)$ of $J(C)$. Associating to a marked Riemann surface of genus $g$ its marked Jacobian we get the following holomorphic map, called the {\bf {\em period map}}:
\begin{equation*}
\begin{aligned}
\Peri: \Tei & \longrightarrow \Sie \\
(C,h) & \mapsto (J(C),\phi_h).
\end{aligned}
\end{equation*}
Consider now the group homomorphism
\begin{equation*}\label{E:map-chi}
\begin{aligned}
\chi:\Gamma_g& \longrightarrow \Sp_{2g}(\Z) \\
\alpha &\mapsto (\alpha_{S_g})_*
\end{aligned}
\end{equation*}
where $(\alpha_{S_g})_*\colon H_1(S_g,\Z)\stackrel{\cong}{\longrightarrow} H_1(S_g,\Z)$ is the symplectic automorphism induced by the geometric realization $\alpha_{S_g}$ of $\alpha$. The map $\chi$ is indeed surjective (see \cite[p. 460]{GAC2}). It is easily checked that the period map $\Peri$ is equivariant with respect to the homomorphism $\chi$ and to the actions of $\Gamma_g$ on $\Tei$ and of $\Sp_{2g}(\Z)$ on $\Sie$ that we described above. Therefore we get the following commutative diagram (in the category of complex analytic spaces)
\begin{equation}\label{E:diag-per-Tor}
\xymatrix{
\Tei \ar[r]^{\Peri} \ar[d] & \Sie \ar[d] \\
M_g\ar[r]^{t_g} & A_g
}
\end{equation}
where $t_g$ is the algebraic morphism, called the {\em Torelli morphism}, given by:
\begin{equation*}
\begin{aligned}
t_g: M_g & \longrightarrow A_g \\
C & \mapsto J(C).
\end{aligned}
\end{equation*}

\vspace{0,1cm}

After this quick review of the classical theory, we can now explain the tropical analogues of the above spaces and of the period map.

\subsection{Tropical Teichm\"uller space}

{\em Pure tropical curves}\footnote{These curves correspond to compact tropical curves up to tropical modifications in the terminology of \cite{mz}. Our definition of tropical curves is the slightly more general definition proposed in \cite{BMV}, see below. Therefore, we call this restricted subclass ''pure" tropical curves, a term which was introduced by L. Caporaso in \cite{capsurvey}.}, i.e. compact tropical manifolds of dimension $1$, are given by metric graphs,
%\footnote{up to tropical modifications, which will always be the case throughout this paper.}
as shown in the breakthrough paper \cite{mz} of Mikhalkin-Zharkov. Note that a graph is an Eilenberg-MacLane $K(F_g,1)$-space (for some natural number $g$, called the genus of the graph), where $F_g$ is the free group on $g$ generators. Therefore, in the tropical picture, Riemann surfaces of genus $g$ are replaced by metric graphs of genus $g$; the fundamental group $\Pi_g$ of a Riemann surface of genus $g$ is replaced by the fundamental group $F_g$ of a graph of genus $g$; and the mapping class group $\Gamma_g$ is replaced by the outer automorphism group $\Out(F_g)$ of $F_g$ (note that there are no orientation-preserving restrictions in the tropical world).
As in the classical case, in order to define a marking of a tropical curve, we fix a graph of genus $g$, say the rose with $g$ petals (i.e.~the graph which has a unique vertex and $g$ loops attached to it) which we denote by $R_g$, and we define a marking of a tropical curve $C=(\Gamma, l)$ of genus $g$ (where $\Gamma$ is the graph of genus $g$ and $l$ is the length function on the edges) to be a homotopy equivalence $h:R_g\to \Gamma$, up to an isometry of the metric graph $(\Gamma,l)$; see Definition \ref{D:marked-graph} for the precise definition.
This analogy has also been pointed out by L. Caporaso in \cite[Sec. 5]{capsurvey}.

Following \cite{BMV} (which was inspired by the analogy between the moduli space of tropical curves and the moduli space of Deligne-Mumford stable curves) it is convenient for our purposes to enlarge the class of pure tropical curves by allowing (vertex)-weighted graphs. Therefore, a {\em tropical curve} will be throughout this paper a metric weighted graph $(\Gamma,w,l)$ satisfying a natural stability condition; we refer the reader to \S\ref{ss:tg}, where we also define a marking of an arbitrary tropical curve.

In view of the previous definitions, it is now clear that the analogue of the Teichm\"uller space should be the moduli space of marked metric graphs. Indeed, such a moduli space, usually denoted by $X_g$ and dubbed {\bf {\em Outer space}} by P. Shalen, was constructed, in the celebrated work of Culler-Vogtmann \cite{cullervogtmann}\footnote{In loc. cit., the authors consider only marked metric graphs of genus $g$ with total length equal to one; however this is not very natural for our purpose, so we never normalize the total length of a metric graph.}, as a fan inside an infinite dimensional vector space; we review the construction of $X_g$ in \S \ref{S:Outerspace}.
Moreover, the group $\Out(F_g)$ acts naturally on $X_g$ by changing the marking, and this action is known to be properly discontinuous. This action of $\Out(F_g)$ on $X_g$ has been successfully used to reveal some of the features of this very interesting group (which was the original purpose of geometric group theorists in studying $X_g$); we refer the reader to the survey papers \cite{vogtmann}, \cite{Bes} and \cite{vogICM} for an update of the known results.

Our approach to the tropical Teichm\"uller theory is slightly different from the one used by Culler-Vogtmann in the definition of Outer Space. We define the {\bf {\em (pure) tropical Teichm\"uller space}}, denoted by $\Tg$ (resp.~$\Tgp$) as an abstract (i.e.~not embedded) topological space by gluing together rational polyhedral cones (resp.~ideal rational polyhedral cones, i.e.~rational polyhedral cones with some faces removed) parametrizing marked (pure) tropical curves having fixed underlying marked (weighted) graph; see Definitions \ref{Tgp} and \ref{Tg}. In fact $\Tgp$ is homeomorphic to $X_g$, which amounts to saying that $X_g$ has the simplicial topology: this result is certainly well-known to the experts in geometric group theory, and a proof can be found in \cite{guirardel-levitt}. The space $\Tg$ can be regarded as a bordification (or partial compactification) of $\Tgp\cong X_g$ and, indeed, 
it provides a modular description of the simplicial closure of $X_g$.
It would be interesting to compare $\Tg$ with the bordification of $X_g$ constructed by M. Bestvina and M. Feighn in \cite{BF} (see \S\ref{S:open}).

Topological spaces obtained by gluing together rational polyhedral (ideal) cones via lattice-preserving linear maps are called {\em (ideal) stacky fans} in this paper. The stacky fans previously introduced in \cite{BMV} and \cite{chan} are special cases of the more general definition presented here. Section \ref{s:sf} of this paper is devoted to the study of (ideal) stacky fans: we prove that they have nice topological properties (e.g., we prove that they are always Hausdorff and, under some mild conditions, also locally compact, locally path-connected, second countable and metrizable; see Corollary \ref{c:all-hausdorff} and Proposition \ref{p:sf-hausdorff}) and we define morphisms of ideal stacky fans (see Definition \ref{d:sfm}). Moreover, we introduce admissible actions of groups on (ideal) stacky fans (roughly, those actions sending ideal cones into ideal cones via lattice-preserving linear maps; see Definition \ref{d:admissible-action}) and we show that quotients by admissible actions do exist in the category of (ideal) stacky fans (see Propositions \ref{p:isstackyfan} and \ref{p:strat-is-global}). Indeed, as it will be clear in a moment, it is this last property that makes the category of ideal stacky fans particularly suited for the purposes of this paper.

The tropical Teichm\"uller space $\Tg$ and its open subset $\Tgp$, the pure tropical Teichm\"uller space, are indeed ideal stacky fans (see Propositions \ref{p:tgp-is-isf} and \ref{p:tg-is-isf}). The group $\Out(F_g)$ acts on $\Tg$ and on $\Tgp$ by changing the marking of the (pure) tropical curves (see \S\ref{outergroup}) and we show that this action is admissible in Proposition \ref{p:outer-is-admissible}. The quotients $\Mtrg:=\Tg/\Out(F_g)$ and $\Mp:=\Tgp/\Out(F_g)\cong X_g/\Out(F_g)$ (which exist in the category of ideal stacky fans by what was said before) are moduli spaces for tropical curves (resp. pure tropical curves) of genus $g$; they coincide indeed with the moduli spaces first introduced in \cite{BMV} and further studied in \cite{chan} and \cite{capsurvey} (see \S\ref{s:mtrg}). Note also that we have natural maps of ideal stacky fans $\Tg\to \Mtrg$ and $\Tgp\to \Mp$ which correspond to forgetting the marking of the tropical curves (resp. pure tropical curves).

\subsection{Tropical Siegel space}

Following \cite{BMV} and slightly generalizing \cite{mz}, we define a {\em tropical principally polarized (=p.p.) abelian variety} of dimension $g$ to be a  pair $(V/\Lambda,Q)$ consisting of a $g$-dimensional real torus $V/\Lambda$ and a positive semi-definite quadratic form $Q$ on $V$ which is rational with respect to $\Lambda$, i.e. such that its null space is defined over $\Lambda\otimes_{\Z} \Q$. We say that a tropical p.p. abelian variety $(V/\Lambda, Q)$ is {\em pure} if $Q$ is positive definite\footnote{In the paper \cite{mz}, only pure tropical p.p. abelian varieties are considered and they are simply called tropical p.p. abelian varieties.}.  A marking of a tropical p.p. abelian variety $(V/\Lambda, Q)$ is an isomorphism of real tori $\phi:\R^g/\Z^g\stackrel{\cong}{\longrightarrow} V/\Lambda$, or equivalently a linear isomorphism from $\R^g$ onto $V$ sending $\Z^g$ isomorphically onto $\Lambda$.

Therefore, in the tropical setting,  complex tori are replaced by real tori, the symplectic form giving the polarization on a complex torus is replaced by a rational positive semi-definite quadratic form, and the isomorphism of symplectic lattices giving the marking is replaced by an isomorphism of lattices. In particular, the symplectic group $\Sp_{2g}(\Z)$ is replaced by the general linear group $\GL_g(\Z)$.

Marked (pure) tropical p.p. abelian varieties of dimension $g$ are naturally parame\-trized by the cone $\Ort\subset \R^{\binom{g+1}{2}}$ (resp. $\O\subset \R^{\binom{g+1}{2}}$) of rational positive semi-definite (resp. positive definite) quadratic forms on $\R^g$. Note that $\O$ is an open cone in $\R^{\binom{g+1}{2}}$ and $\Ort$ is a subcone of the closure $\ov{\O}$ of $\O$ inside $\R^{\binom{g+1}{2}}$, which consists of all the positive semi-definite quadratic forms.

In order to put an ideal stacky fan structure on $\O$ and on $\Ort$, we need to choose an {\em admissible  decomposition} $\Sigma$ of $\Ort$, which, roughly speaking, consists of a fan of rational polyhedral cones whose support is $\Ort$ and such that $\GL_g(\Z)$ acts on $\Sigma$ with finitely many orbits (see Definition \ref{decompo}). There are few known examples of such admissible decompositions of $\Ort$: in \S\ref{S:exa-deco} we review the definition and main properties of two of them, namely the perfect cone decomposition $\Sigma_P$ (or first Voronoi decomposition) and the 2nd Voronoi decomposition $\Sigma_V$, both introduced by Voronoi in \cite{Vor}.
Given any such admissible decomposition $\Sigma$ of $\Ort$, we can define a stacky fan $\Hg$, which we call the {\bf {\em tropical Siegel space}} associated to $\Sigma$, by gluing together the cones of $\Sigma$ using the lattice-preserving linear maps given by the natural inclusions of cones of $\Sigma$ (see Definition \ref{D:trop-Sieg}). The tropical Siegel space $\Hg$ contains an open subset $\Hgp\subset \Hg$, called the {\bf {\em pure tropical Siegel space}} (associated with $\Sigma$), which is the ideal stacky fan obtained from the stacky fan $\Hg$ by removing the cones that are entirely contained in the boundary $\Ort\setminus \O$. The (pure) tropical Siegel space $\Hg$ (resp. $\Hgp$) naturally parametrizes marked (pure) tropical p.p. abelian varieties of dimension $g$ (see Proposition \ref{P:Siegel}\eqref{P:Siegel1}). Moreover, there exists  a continuous bijection $\Phi:\Hg\to \Ort$ which restricts to a homeomorphism from $\Hgp$ onto $\O$ (see Proposition \ref{P:Siegel}\eqref{P:Siegel2}). Notice however that the map $\Phi$ is certainly not a homeomorphism because $\Sigma$ is not locally finite at the boundary $\Ort\setminus \O$ (see Lemma \ref{L:locally-finite}).
The topology that we put on $\Hg$ depends on the choice of the admissible decomposition $\Sigma$;
indeed we do not know if, varying $\Sigma$, the tropical Siegel spaces $\Hg$ are homeomorphic among each other or not (see \S\ref{S:open}).

The group $\GL_g(\Z)$ acts naturally on $\Hg$ and on $\Hgp$ by changing the marking and it is easy to check that these actions are admissible (see Lemma \ref{L:action-GL}).
The quotients $\Ag:=\Hg/\GL_g(\Z)$ and $\Agp:=\Hgp/\GL_g(\Z)$ (which exist in the category of ideal stacky fans by what was said before) are moduli spaces for tropical p.p. abelian varieties (resp. pure tropical p.p. abelian varieties) of dimension $g$; they coincide indeed with the moduli spaces first introduced in \cite{BMV} and in \cite{chan}.
Note also that we have natural maps of ideal stacky fans $\Hg\to \Ag$ and $\Hgp\to \Agp$ which correspond to forgetting the marking of the tropical p.p. abelian varieties (resp. the pure tropical p.p. abelian varieties).
In Proposition \ref{P:Siegel-Ag}\eqref{P:Siegel-Ag3}, we prove that the moduli spaces $\Agp$ are homeomorphic to the quotient $\O/\GL_g(\Z)$ for every admissible decomposition $\Sigma$. On the other hand, the topology that we put on $\Ag$ depends on the choice of $\Sigma$ and we do not know if, varying $\Sigma$, the different moduli spaces $\Ag$ are homeomorphic among each other
or not (see \S\ref{S:open}).

\subsection{Tropical period map}

Following again \cite{BMV}, which slightly generalizes the original definition of Mikhalkin-Zharkov \cite{mz}, to any (pure) tropical curve $C=(\Gamma, w, l)$ of genus $g$ we can associate a (pure) tropical p.p. abelian variety of dimension $g$, called the {\em Jacobian}Ê of $C$ and denoted by $J(C)$, which is given by the real torus  $\displaystyle \frac{H_1(\Gamma,\R)\oplus \R^{|w|}}{H_1(\Gamma,\Z)\oplus \Z^{|w|}}$ together with a rational positive semi-definite quadratic form $Q_C$ which is identically zero on $\R^{|w|}$ and on $H_1(\Gamma,\R)$ measures the lengths of the cycles of $\Gamma$ with respect to the length function $l$ on $\Gamma$ (see Definition \ref{D:trop-Jac}). Moreover, a marking $h$ of $C$ naturally induces a marking $\phi_h$ of $J(C)$ (see Definition \ref{D:mark-Jac}).

In Section \ref{S:per-map}, we define the {\bf{\em tropical period map} }
$$\begin{aligned}
{\mathcal P}_g^{tr}: \Tg & \longrightarrow \Ort \\
(C,h)& \mapsto \phi_h^*(Q_C),
\end{aligned}$$
which is shown to be a continuous map in Lemma-Definition \ref{D:periodmap}.

A natural question then arises: can we lift the tropical period map to a map of stacky fans
$$\Pg: \Tg\to \Hg$$
for some admissible decomposition $\Sigma$ of $\Ort$?
The answer is given in Theorem \ref{T:period-S}: such a  map $\Pg$ of stacky fans, called the {\bf {\em $\Sigma$-period map}}, exists  when $\Sigma$ is compatible with the period map
${\mathcal P}_g^{tr}$ in the sense that it sends cells of the stacky fan $\Tg$ into cones of $\Sigma$ (see Definition \ref{D:compat-deco}). For example, it is known that the perfect cone decomposition $\Sigma_P$ and the 2nd Voronoi decomposition $\Sigma_V$ are both compatible with the period map (see Fact \ref{F:compa-P-V}).

It is easily checked that the $\Sigma$-period map (when it exists) is equivariant with respect to the natural group homomorphism
$$\Ab: \Out(F_g)\to \Out(\Z^g)=\Aut(\Z^g)=\GL_g(\Z)$$
induced by the abelianization homomorphism $F_g\to F_g^{\rm ab}=\Z^g$ (see Theorem \ref{T:period-S}\eqref{T:per2}). Therefore, for any $\Sigma$ compatible with the period map, we have the following commutative diagram of stacky fans (which in some sense can be regarded as the main result of this paper):
\begin{equation}\label{E:main-diag}
\xymatrix{
\Tg \ar[r]^{\Pg} \ar[d] &   \Hg \ar[d]\\
\Tg/\Out(F_g)=\Mtrg \ar[r]^{\tg} & \Ag=\Hg/\GL_g(\Z)
}
\end{equation}
where $\tg$ is the map, called the {\bf {\em tropical Torelli map}} (with respect to $\Sigma$), that sends a tropical curve $C$ into its tropical Jacobian $J(C)$ (see Theorem \ref{T:period-S}\eqref{T:per2bis}). For the 2nd Voronoi decomposition $\Sigma_V$, the tropical Torelli map was introduced in \cite{BMV}  and further studied in \cite{chan}.

The restriction of the above diagram \eqref{E:main-diag} to the pure moduli spaces is independent from the chosen admissible decomposition $\Sigma$ and reduces to the following commutative diagram of topological spaces
\begin{equation}\label{E:main-diag2}
\xymatrix{
X_g \ar[r]^{\Pgp} \ar[d] &   \O \ar[d]\\
X_g/\Out(F_g) \ar[r]^{\tgp} & \O/\GL_g(\Z)
}
\end{equation}
where $\Pgp$ (called the {\em pure period map}) is equal to the restriction of the tropical period map $\Pg$  to $X_g\cong \Tgp$
and $\tgp$  (called the {\em pure tropical Torelli map}) is equal to the restriction of the tropical Torelli map $\tg$ (for any $\Sigma$ as above) to $\Mp\cong X_g/\Out(F_g)$; see Theorem  \ref{T:period-S}\eqref{T:per3}. Note that the diagram \eqref{E:main-diag} can be seen as a bordification (or partial compactification) of the diagram \eqref{E:main-diag2}; the vertical arrows are now topological quotient maps, in contrast to the vertical arrows in \eqref{E:main-diag}.

Here is a concrete example that illustrates the effect of applying the maps in (\ref{E:main-diag}) to a particular point in $X_3$.  Let $\G$ be the graph drawn at the top of Figure~\ref{fig:specialize}, with three vertices and edges labeled $a,b,c,d,$ and $e$.  Let $h\colon R_3\ra \G$ be the marking of $\G$ that sends the three loops of $R_3$ to $\ov a b, \ov c b,$ and $ d e \ov d$, respectively, where $\ov e$ denotes the reversal of $e$.  Thus $h$ is a homotopy equivalence.  Suppose $l\colon E(\G)\ra \R_{\ge 0}$  is  a length function on the edges of $\G$.  Then $(\G,l,h)$ is a point in $X_3$, i.e.~a metric graph with a marking.  Applying the period map ${\mathcal P}_g^{tr}$ to $(\G,l,h)$ yields a $3\times 3$ positive definite matrix
$$A = \left(
\begin{matrix}
	l(a)+l(b) & l(b) & 0\\
	l(b) & l(b)+l(c) & 0\\
	0 & 0 & l(e)
\end{matrix}
\right).
$$
The image of $(\G,l,h)$ under the quotient of $X_3$ by $\Out(F_3)$ is simply the pure tropical curve $(\G,l)$ in $M^{tr}_3$.  The period map ${\mathcal P}_g^{tr}$ then descends to the Torelli map $t^{tr}_3$ from $M^{tr}_3$ to $A^{tr}_3$ sending $(\G,l)$ to the $\GL_3(\Z)$-equivalence class of the matrix $A$.  All of these maps will be defined precisely in this paper.

\vspace{0.3cm}

The paper is organized as follows. In Section \ref{s:sf} we introduce the category of  ideal stacky fans and we study quotients of ideal stacky fans by admissible actions. In Section \ref{s:tts} we define the (pure) tropical Teichm\"uller space as a (ideal) stacky fan and study the admissible action of $\Out(F_g)$ on them. In Section \ref{s:mtrg} we realize  the moduli space of (pure) tropical curves as a quotient of the (pure) Teichm\"uller space by $\Out(F_g)$. In Section \ref{S:admi-deco} we study the (pure) tropical Siegel space and identify its quotient by $\GL_g(\Z)$ with the moduli space of (pure) tropical p.p. abelian varieties. In Section \ref{S:periodmap} we study the (pure) tropical period map from the (pure) tropical Teichm\"uller space to the (pure) tropical Siegel space and we identify its quotient with the (pure) tropical Torelli map. Finally, in Section \ref{S:open} we present some open problems that naturally arise from our work.

\vspace{0.2cm}

After the completion of this work, we heard about the Ph.D. thesis \cite{Bak} of Owen Baker at Cornell University, where the tropical period map (called the Jacobian map in loc. cit.)
from Outer space to the space of positive define quadratic  forms is also discussed. More specifically, the author uses the Jacobian map in order to find an invariant deformation retract of Outer Space
for genus $3$, which is then used to compute the first and second cohomology group
of the kernel of the abelianization map $\Ab: \Out(F_3)\to \GL_3(\Z)$ (which can be considered as a tropical analogue of the classical Torelli subgroup of the mapping class group).
We wonder if the results presented here may help in finding an invariant deformation retract of Outer Space in arbitrary genus, generalizing the results of O. Baker.

Moreover, after this paper was submitted for publication, two interesting preprints \cite{Ji} and \cite{ACP} related to the topics discussed in this paper were posted on the arXiv.
In \cite{Ji}, Lizhen Ji uses the tropical period map to construct several $\Out(F_g)$-invariant complete geodesic length metrics on the Outer Space, some of them induced by piecewise smooth
Riemannian metrics.
In \cite{ACP}, D. Abramovich, L. Caporaso and S. Payne describe, among other results,  $\Mtrg$ as the skeleton of the stack $\mathcal{M}_g$ of smooth projective curves of genus $g$.
They view $\Mtrg$ inside the category of generalized cone complexes with integral structure, which is a subcategory of  the category of ideal stacky fans.
Moreover, they construct a tropicalization map
%${\rm Trop}:  {\mathcal M}_g^{\rm an}\to \Mtrg$
from a suitable subset of the Berkovich analytification ${\mathcal M}_g^{\rm an}$ of ${\mathcal M}_g$ to the tropical moduli space $\Mtrg$, which generalizes the tropicalization map constructed by
the third author in \cite{viviani}.

\vspace{0,3cm}

\noindent {\bf Acknowledgments.} We are extremely grateful to M.~Bestvina and F.~Vallentin for their very helpful correspondences and for generously answering several mathematical questions, and D.~Margalit for supplying many helpful references and for introducing many of the ideas mentioned above to the first author. We thank the referee for her/his useful comments.

M.~Chan was supported by a Graduate Research Fellowship from the National Science Foundation. M. Melo was supported by the FCT project \textit{Espa\c cos de Moduli em Geometria Alg\'ebrica} (PTDC/MAT/ 111332/2009), by the FCT project \textit{Geometria Alg\'ebrica em Portugal} (PTDC/MAT/099275/2008) and by the Funda\c c\~ao Calouste Gulbenkian program ``Est\'imulo \`a investiga\c c\~ao 2010''. F. Viviani is a member of the research center CMUC (University of Coimbra) and he was supported by  the FCT project \textit{Espa\c cos de Moduli em Geometria Alg\'ebrica} (PTDC/ MAT/111332/2009) and by PRIN project
\textit{Geometria delle variet\`a algebriche e dei loro spazi di moduli} (funded by MIUR, cofin 2008).

\section{Ideal stacky fans and stratified quotients}\label{s:sf}

Throughout, we will consider a fixed $\R$-vector space $V\cong\R^\mathcal{I}$ where $|\mathcal{I}|$ is finite or countable. We endow $V$ with the product topology and we fix a lattice $N\cong\Z^\mathcal{I}\subset \R^\mathcal{I}$ in it.

\subsection{Ideal fans and ideal stacky fans}

We start this subsection by recalling the definition of a rational polyhedral fan and introducing the concept of an ideal fan, which is obtained from a rational polyhedral fan by removing some faces.

\begin{defi}\label{D:cone-and-fan}
A  {\em rational polyhedral cone}, or a cone, for short, is a closed, convex set of the form
$$\ov{\sigma}=\{\lambda_1x_1+\cdots\lambda_sx_s~:~\lambda_i\in\R_{\ge0}\}\subseteq V$$
for some finite set $x_1,\ldots,x_s\in N$. A \textit{rational polyhedral fan} in $V$ is a (possibly infinite) collection $\ov{\Sigma}$ of cones  of $V$ satisfying:
\begin{itemize}
	\item if $\ov{\tau}$ is a face of $\ov{\sigma}\in\ov{\Sigma}$ then $\ov{\tau}\in\ov{\Sigma},$ and
	\item the intersection of two cones in $\ov{\Sigma}$ is a face of each.
\end{itemize}
The {\em support} of $\ov{\Sigma}$, denoted $\supp(\ov{\Sigma})$, is the union of the cones in $\ov{\Sigma}$.
\end{defi}

\begin{defi}\label{D:ideal-cone}
An \textit{ideal rational polyhedral cone} $\sigma$ of $V$ (or an \textit{ideal cone} of $V$, for short) is a convex subset of $V$ obtained from an $N$-rational polyhedral cone ${\ov \sigma}$
in $V$ by removing some of its faces. A \textit{face} of an ideal cone $\sigma$ is the intersection of $\sigma$ with a face of $\ov{\sigma}$.  Thus, faces of ideal cones can be empty.  We let $\sigma^0$ denote the relative interior of $\sigma$,
so that $\sigma^0$ is a rational open polyhedral cone.
\end{defi}

Note that faces of $\sigma$ are closed subsets of $\sigma$. The expression ``ideal cones'' is taken
 from ~\cite{cullervogtmann}\footnote{Indeed the word ``ideal" occurs in other contexts with a similar meaning,  e.g. ``ideal triangles'' in hyperbolic geometry
 (see \cite[Prop. 2.4.12]{Thu}) or ``ideal boundaries'' of a Riemann surface (see \cite[Sec. 8.8]{Bea}).}
%Our use of the word ``ideal'' follows the terminology introduced in~\cite{cullervogtmann}
; when it is clear from the context, we will refer to ideal cones simply as cones.

\begin{defi}\label{D:ideal-fan}
Suppose $\ov{\Sigma} = \{\ov{\sigma_i}\}$ is a rational polyhedral fan in $V$.  Fix some subset of the cones of $\ov{\Sigma}$ whose support is denoted by $Z$.  Then $\supp(\ov{\Sigma})\setminus Z\subseteq V$ can be written as a union of ideal cones in a natural way: if $\ov{\sigma_i}$ was a cone in $\ov{\Sigma}$, then replace it by the ideal cone $\sigma_i$ obtained by deleting the faces of $\ov{\sigma_i}$ lying in $Z$, or remove $\ov{\sigma_i}$ entirely if it lies in $Z$ itself.  We call a collection of ideal cones $\Sigma = \{\sigma_i\}$ obtainable in this way an \textit{ideal fan}.

As with fans, the \textit{support} of an ideal fan is the union of its ideal cones and is denoted $\supp(\Sigma)$.
Note also that
if $\tau$ is a face of $\sigma\in \Sigma$ then $\tau\in \Sigma$, and
the intersection of two ideal cones in $\Sigma$ is a face of each.
\end{defi}

\begin{defi}\label{D:locally-finite}
An ideal fan $\Sigma$ in $V$ is \emph{locally finite} if every $x\in \supp(\Sigma)$ has some open neighborhood that meets only finitely many ideal cones of $\Sigma$.

Equivalently, by shrinking the open neighborhoods, we see that $\Sigma$ is locally finite if and only if every $x\in \supp(\Sigma)$ is in only finitely many ideal cones and has some neighborhood meeting only those cones.
\end{defi}

\begin{figure}\label{non-loc-finite}
\setlength{\unitlength}{1.9pt}
\begin{picture}(113,35.5)(20,148)
\put(47.75,150){\circle{2.693}}
\put(49,150){\line(1,0){35}}
\put(83.75,150){\circle*{2.236}}
\put(83.5,146){\makebox(0,0)[cc]{$x$}}
\put(113,174.75){\makebox(0,0)[cc]{$\ldots$}}
\put(91.75,161.25){\makebox(0,0)[cc]{$\ldots$}}
\put(47.75,150){\line(0,1){26.5}}
%\emline(48,150)(58,176.25)
\multiput(48,150)(.0336700337,.0883838384){297}{\line(0,1){.0883838384}}
%\end
%\emline(48,149.75)(68,176.25)
\multiput(48,149.75)(.0337268128,.044688027){593}{\line(0,1){.044688027}}
%\end
%\emline(47.75,149.75)(78,176.25)
\multiput(47.75,149.75)(.03848600509,.03371501272){786}{\line(1,0){.03848600509}}
%\end
%\emline(47.75,150)(88,176.25)
\multiput(47.75,150)(.05166880616,.0336970475){779}{\line(1,0){.05166880616}}
%\end
%\emline(47.75,150)(98,176.25)
\multiput(47.75,150)(.06450577664,.0336970475){779}{\line(1,0){.06450577664}}
%\end
%\emline(47.75,149.75)(108,176.25)
\multiput(47.75,149.75)(.07665394402,.03371501272){786}{\line(1,0){.07665394402}}
%\end
\end{picture}
\caption{Example of a non-locally finite ideal fan.  The bottom-left point, colored white, is omitted.  Even though every point of the ideal fan is in finitely many cones, every neighborhood of $x$ meets infinitely many cones.
%CAN YOU MAKE THE TOP ROW EVENLY SPACED? ALSO, MAYBE IT SHOULD BE A FAN IN $R^2$, I.E. CONES AND RAYS INSTEAD OF TRIANGLES AND SEGMENTS? SORRY, I DON'T KNOW HOW TO EDIT THE FIGURE.
}
\end{figure}
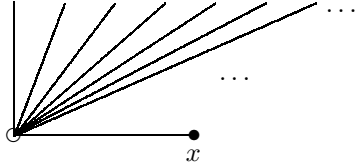

Figure \ref{non-loc-finite} illustrates that the property of being locally finite is stronger than just the requirement that each point is in only finitely many ideal cones.

\begin{lemma}\label{L:locally-finite}
Let $\Sigma$ be an ideal fan in $V$, and let $X_\Sigma = (\coprod \sigma_i)/\sim$ be the space obtained by gluing the ideal cones $\sigma_i\in\Sigma$ on their overlaps.  Then $\Sigma$ is locally finite if and only if the natural  map $X_{\Sigma} \ra \supp(\Sigma)$ is a homeomorphism.
\end{lemma}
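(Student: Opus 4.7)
The map $\pi\colon X_\Sigma\to\supp(\Sigma)$ is set-theoretically a bijection because every point of $\supp(\Sigma)$ lies in the relative interior of a unique cone of $\Sigma$, and it is continuous by the universal property of the quotient topology applied to the inclusions $\sigma_i\hookrightarrow V$. So the problem reduces to determining when $\pi$ is open, equivalently when the subspace topology on $\supp(\Sigma)\subseteq V$ agrees with the quotient topology on $X_\Sigma$.

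For the forward direction, assume $\Sigma$ is locally finite. Given an open $U\subseteq X_\Sigma$ and a point $x\in\pi(U)$, the plan is to produce a $V$-open neighborhood $W'$ of $x$ with $W'\cap\supp(\Sigma)\subseteq\pi(U)$. Local finiteness furnishes a $V$-open neighborhood $W$ of $x$ meeting only finitely many cones $\sigma_1,\dots,\sigma_k$, which necessarily includes every cone through $x$. For each $j$, the openness of $U\cap\sigma_j$ in the subspace topology of $\sigma_j$ gives a $V$-open $W_j$ with $W_j\cap\sigma_j=U\cap\sigma_j$; one sets $W_j=V$ if $x\notin\sigma_j$. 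Then $W':=W\cap\bigcap_j W_j$ does the job: any $y\in W'\cap\supp(\Sigma)$ lies in some $\sigma_m$ and so $y\in W_m\cap\sigma_m=U\cap\sigma_m\subseteq U$.

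For the converse, assume $\Sigma$ fails to be locally finite at some $x\in\supp(\Sigma)$. The plan is to delete from $X_\Sigma$ a well-chosen sequence converging to $x$ in $V$ but staying closed in the quotient topology. Since every ball $B(x,1/n)$ meets infinitely many cones, I would inductively pick distinct cones $\tau_n\in\Sigma$ with $d(x,\tau_n)<1/n$. Because $\tau_n\subseteq\ov{\tau_n^0}$ we have $d(x,\tau_n^0)=d(x,\tau_n)$, so one can then choose $y_n\in\tau_n^0$ with $d(x,y_n)<2/n$, giving $y_n\to x$ in $V$. Set $S=\{y_n:n\ge 1\}$ and $U=X_\Sigma\setminus S$. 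Since $y_n$ lies in the relative interior of $\tau_n$, a cone $\rho\in\Sigma$ contains $y_n$ if and only if $\tau_n$ is a face of $\rho$; as $\rho$ has only finitely many faces, only finitely many $\tau_n$ are faces of $\rho$, hence $S\cap\rho$ is finite and therefore closed in $\rho$ (a Hausdorff subspace of $V$). By the definition of the quotient topology, $S$ is closed in $X_\Sigma$, so $U$ is open. Yet $x\in U$ while every $V$-neighborhood of $x$ contains some $y_n\in S$, so $\pi(U)$ is not a neighborhood of $x$ in $\supp(\Sigma)$, contradicting $\pi$ being a homeomorphism.

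The delicate step is the construction of the closed set $S$ in the converse direction. Two features of the setup unlock it: the fact that every rational polyhedral cone has only finitely many faces (which forces $S\cap\rho$ to be finite for each $\rho$), and the equality $d(x,\tau^0)=d(x,\tau)$ for any ideal cone $\tau$, which allows one to perturb the approximating points into relative interiors without losing the convergence $y_n\to x$ needed to witness the failure of openness.
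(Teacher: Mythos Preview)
Your converse direction is essentially the paper's own construction: a sequence $y_n\in\tau_n^0$ converging to $x$ in $V$ but meeting each fixed cone in only finitely many points (since a cone has only finitely many faces), hence closed in $X_\Sigma$. One small omission: you should require $y_n\ne x$, so that $x\in U$. Since the relative interiors $\tau_n^0$ are pairwise disjoint, $x$ lies in at most one of them, so this is trivially arranged.

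In the forward direction you argue openness rather than closedness, which is equivalent, but there is a slip. When $x\notin\sigma_j$ you set $W_j=V$; then for a point $y\in W'\cap\sigma_m$ with $x\notin\sigma_m$ your asserted equality $W_m\cap\sigma_m=U\cap\sigma_m$ is false (it reads $\sigma_m=U\cap\sigma_m$), and you cannot conclude $y\in U$. The repair is one observation about ideal fans: if $x\in\supp(\Sigma)$ and $x\notin\sigma_j$, then in fact $x\notin\ov{\sigma_j}$, because $\sigma_j=\ov{\sigma_j}\setminus Z$ and $x\notin Z$. Hence you may instead take $W_j$ to be a $V$-open neighborhood of $x$ disjoint from $\sigma_j$, giving $W'\cap\sigma_j=\emptyset\subseteq U$. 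Equivalently, invoke the second formulation in Definition~\ref{D:locally-finite} and choose $W$ from the outset to meet only the cones through $x$, making the case $x\notin\sigma_j$ vacuous. With this fix your argument is complete and parallels the paper's closed-set version.
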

\begin{proof}
The map $f\colon \left((\coprod \sigma_i)/\sim\right) \ra\supp(\Sigma)$  is obviously a continuous bijection. Suppose $\Sigma$ is locally finite, let $Z$ be a closed set in $X_\Sigma$, and let $x\in \supp(\Sigma)$ be a limit point of $f(Z)$.  By local finiteness, there is some cone $\sigma$ such that $x$ is a limit point of $f(Z)\cap \sigma$; hence $Z$ is~closed.

Conversely, suppose $\Sigma$ is not locally finite at $x\in \supp(\Sigma)$.  For $j=1,2,\ldots$, choose a cone $\sigma_j \in \Sigma$ whose interior meets the open ball $B_{1/j}(x)$ of radius $1/j$ and such that $\sigma_j$ is not a face of any previously chosen cone.  Such a choice is possible since infinitely many cones meet $B_{1/j}(x)$.  Pick any $x_j \in B_{1/j}(x) \cap\sigma_j^0$ different from $x$.  Then the set $\{x_1,x_2,\ldots\}$ is closed in $X_\Sigma$ since each cone contains only finitely many of its points, but as a subset of $V$, it contains $x$ in its closure.
\end{proof}

\begin{defi}
\label{d:integral}
Let $V=\R^m$ and $V'=\R^n$ with fixed lattices $N\cong\Z^m\subset V$ and $N'\cong\Z^n\subset V' $  and let $X,X'$ be ideal cones in $V$ and $V'$, respectively. We say that a linear map $L:V\ra V'$ is {\em integral} if $L(N)\subseteq N'$.  We say that it is a {\em lattice-preserving} inclusion if it induces an inclusion $X\hookrightarrow X'$ identifying $X$ with a face of $X'$, and if
$$L(N\cap {\rm span}(X)) = N'\cap L({\rm span}(X)).$$
\end{defi}

We now introduce ideal stacky fans, which are a slight strengthening of stacky fans in the sense of  \cite[Def. 3.2]{chan} (relaxing the
definition in~\cite[Def. 2.1.1]{BMV}).

\begin{defi}
\label{d:sf}
Let $\{V_i\}$ is a collection of finite-dimensional real vector spaces, each with a fixed associated lattice,
%$\{N_i\}$,
and let $\{\sigma_i\subseteq V_i\}$ be a collection of ideal cones, one in each.  Let $\{L_\al\colon V_i\ra V_j\}$ be a collection of lattice-preserving linear maps inducing an identification $\sigma_i\hookrightarrow \sigma_j$ of $\sigma_i$ with a face of $\sigma_j$.  Here, we allow $i=j$.

Let $X=(\coprod \sigma_i)/\sim$, where $\sim$ is the equivalence relation generated by identifying each $x$ with $L_\al(x)$ for all linear maps $L_\al$.  If $X=\coprod \left( \sigma^0_i/\!\sim\right)$ as sets, then we say that $X$ is an {\em ideal stacky fan} with cells $\{\sigma^0_i/\!\sim\}$.  We call a map $L_\alpha$ an {\em inclusion of faces}.  We say that $\sigma_i$ is a {\em stacky face} of $\sigma_j$ if $i=j$ or there is a sequence of inclusions of faces from $\sigma_i$ to $\sigma_j$.
\end{defi}

\begin{remark}\label{r:finitely-many-maps}
Note that for fixed $\sigma$ and $\sigma'$ in the definition above, there can be only finitely many distinct maps $L_\alpha\colon\sigma\!\ra\sigma'$.  For since $L_\al$ is lattice-preserving, it must take the first lattice point on a ray of the closure of $\sigma$ to a first lattice point on a ray of the closure of $\sigma'$, so there are only finitely many choices.
\end{remark}

\begin{remark}\label{R:old-def}
Definition~\ref{d:sf} allows for ideal cones instead of closed cones, and it allows for infinitely many cones as well.  However, a straightforward argument, essentially the one in \cite[Theorem~3.4]{chan} in the case of tropical moduli space, shows that an ideal stacky fan on finitely many closed cones is a stacky fan in the restricted sense of~\cite[Def.~3.2]{chan}.
\end{remark}

\begin{remark}\label{r:asso-fan}
If $\Sigma=\{{\sigma}_i\}$ is an ideal fan in a finite-dimensional vector space $V$, then the space $X_\Sigma=(\coprod \sigma_i)/\!\sim$, where $\sim$ is generated by inclusions of faces, is trivially an ideal stacky fan with cells $\{\sigma_i^0\}$.  However, $X_\Sigma$ has the same topology as $\supp(\Sigma)\subseteq V$ only in the case that $\Sigma$ is locally finite; this is precisely Lemma~\ref{L:locally-finite}.
\end{remark}

\begin{remark}\label{r:infinite}
Suppose instead that $V$ is infinite-dimensional, $\Sigma=\{{\sigma}_i\}$ is an ideal fan in $V$, and each cone of $\Sigma$ is simplicial, i.e.~is a cone over a simplex, possibly minus some faces.
Pick a point $e_\rho$ on each ray $\rho$ of the associated (closed) fan $\ov{\Sigma}$, to be regarded as the ``first lattice point'' of that ray.  Then a $d$-dimensional cone $\sigma\in\Sigma$ can
be identified with an orthant $\sigma' = \R_{\ge 0}^d \subset \R^d$, possibly minus some faces, by sending each point $e_\rho$ to one of the standard basis vectors in $\R^d$.

Then we associate a space $X_\Sigma$ to $\Sigma$ by gluing the cones $\sigma'$ along their faces in accordance with the fan structure of $\Sigma$; this space $X_\Sigma$ is again trivially a
stacky fan.  Again by Lemma~\ref{L:locally-finite}, it is homeomorphic to $\supp(\Sigma)$ if and only if $\Sigma$ is locally~finite.
\end{remark}

Ideal stacky fans enjoy remarkable topological properties.

\begin{prop}\label{P:CW}
Any ideal stacky fan is homeomorphic to a cone over a CW-complex minus a subcomplex.
\end{prop}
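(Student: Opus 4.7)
The plan is to reduce the statement to a standard fact about closed stacky fans by passing to the ``closure'' $\ov X$ of $X$ obtained by re-inserting the removed faces. Concretely, I would replace each ideal cone $\sigma_i \subseteq V_i$ by its Euclidean closure $\ov{\sigma_i}$, a rational polyhedral cone. Since the face inclusions $L_\alpha \colon V_i \to V_j$ are linear and lattice-preserving, they extend canonically to lattice-preserving inclusions $\ov{\sigma_i} \hookrightarrow \ov{\sigma_j}$, and gluing along these produces a stacky fan $\ov X$ (in the closed sense of~\cite[Def.~3.2]{chan}) that naturally contains $X$ as a subset.

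Next, I would realize $\ov X$ as a genuine topological cone. Every $\ov{\sigma_i}$ contains $0 \in V_i$, and these origins are identified to a single distinguished point $0 \in \ov X$. The scaling action of $\R_{\geq 0}$ on each $V_i$ commutes with the (linear) face inclusions and hence descends to a continuous action on $\ov X$ fixing $0$. Setting $K := (\ov X \setminus \{0\})/\R_{>0}$, one obtains $K$ concretely as the space obtained by gluing the link spaces $K_i := (\ov{\sigma_i} \setminus \{0\})/\R_{>0}$ --- each canonically a polyhedral CW complex --- along the inclusions induced by the $L_\alpha$. Thus $K$ inherits a CW structure, and the radial homeomorphism identifies $\ov X$ with $\mathrm{Cone}(K) := (K \times [0,1])/(K \times \{0\})$, with the apex corresponding to $0$.

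Finally, I would identify the complement $R := \ov X \setminus X$ with a subcomplex $K'' \subseteq \mathrm{Cone}(K)$. The set $R$ is a union of open cells of $\ov X$, namely those corresponding to faces $\tau$ of some $\ov{\sigma_i}$ that were removed from $\sigma_i$. By Definition~\ref{D:ideal-cone} the removed portion of each ideal cone is a union of faces of its closure, automatically closed under taking subfaces; and the consistency of the face-inclusion data in Definition~\ref{d:sf} forces these local removal patterns to glue coherently across all $\sigma_i$, so $R$ is closed in $\ov X$. Consequently $R$ is a subcomplex of the CW complex $\mathrm{Cone}(K)$, and $X \cong \mathrm{Cone}(K) \setminus K''$, as desired.

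The step I expect to be the main obstacle is this last one: one must verify carefully that a face $\tau^0$ appearing inside several closures $\ov{\sigma_{i_1}}, \ov{\sigma_{i_2}}, \ldots$ is either removed in all of them or in none, so that its status in $\ov X$ is unambiguous and the closed union $R$ truly forms a subcomplex. This ultimately reduces to a careful unpacking of what it means for the maps $L_\alpha$ in Definition~\ref{d:sf} to induce an identification of $\sigma_i$ ``with a face of $\sigma_j$'' --- essentially, that such an identification can only occur along a non-removed face of $\sigma_j$ --- but involves no deeper difficulty once the definitions are handled carefully.
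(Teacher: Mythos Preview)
Your overall strategy---pass to the closed case, realize $\ov X$ as a cone over a link space $K$, then remove a subcomplex---matches the paper's. However, you have misidentified where the difficulty lies. The step you flag (consistency of the removed faces across different $\sigma_i$) is routine once the definitions are unpacked, as you yourself suspect. The genuine gap is earlier, at the sentence ``Thus $K$ inherits a CW structure.'' The definition of an ideal stacky fan (Definition~\ref{d:sf}) explicitly allows $i=j$, so there may be nontrivial lattice-preserving self-maps $L_\alpha\colon \sigma_i\to\sigma_i$; these induce nontrivial identifications on the open cell $\sigma_i^0$. Consequently the natural map from the polytope $K_i$ into $K$ is not injective on its interior and so is not a characteristic map, and the putative cells $K_i^0/{\sim}$ need not even be open balls. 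Gluing polytopes along such maps does not automatically produce a CW complex. (A toy example: a single orthant $\R_{\ge 0}^n$ with $L$ a cyclic permutation of coordinates.)

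The paper's remedy, which is really the entire technical content of the proof, is to barycentrically subdivide each $\sigma_i$ before gluing. The vertices of a barycentric simplex correspond to a flag of faces of $\sigma_i$ of pairwise distinct dimensions, and every $L_\alpha$ preserves dimension; hence any composite of $L_\alpha$'s carrying a barycentric simplex to itself must fix each vertex and is therefore the identity. After subdivision there are no nontrivial self-identifications on any simplex, and the quotient is a genuine simplicial (hence CW) complex. Once you insert this step, your outline goes through.
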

\begin{proof}
Let $X=(\coprod \sigma_i)/\!\sim$ be an ideal stacky fan.
Assume first that each $\sigma_i$ is a closed cone. Take the barycentric subdivision of each $\sigma_i$; let $\{\sigma_i^1,\ldots,\sigma_i^{l_i}\}$ be the cones in the resulting complex.  Let $C = \{\sigma^k_i\}_{\sigma_i\in\Sigma, 1\le k\le l_i}$ be the set of all of the barycentric pieces of the cones.
Now, each $L_\alpha$ maps each cone in $C$ homeomorphically to another cone in $C$.  Consider the equivalence relation on the set $C$ generated by the maps $L_\al$ in this way, and let $\mathcal{J}\subseteq C$ be a choice of representatives.   Then
$$X\cong \displaystyle \coprod_{\sigma^k_i\in \mathcal{J}} \sigma^k_i /\sim',$$
where $\sim'$ is generated by composing identifications via $L_\al$ and identifications of faces within a barycentric subdivision.  Note in particular that any such composite map taking $\sigma^k_i$ to itself must be the identity, since the vertices of $\sigma^k_i$ correspond to a faces of $\sigma_i$ of distinct dimensions and each map $L_\al$ preserves this correspondence.  It follows that the space $\coprod_{\mathcal{J}} \sigma^k_i /\sim'$ is a cone over a CW-complex.

For a general ideal stacky fan, we may remove some of the faces of the cones $\sigma_i$ and repeat the above construction; hence we end up with a cone over a CW-complex minus a subcomplex.
\end{proof}

\begin{cor}\label{c:all-hausdorff}
Any ideal stacky fan is
\begin{itemize}
\item normal (hence Hausdorff);
\item paracompact;
\item locally contractible (hence locally path connected and locally connected).
\end{itemize}
\end{cor}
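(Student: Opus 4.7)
The plan is to reduce the statement to standard properties of CW-complexes by invoking the preceding Proposition~\ref{P:CW}. That proposition shows any ideal stacky fan $X$ is homeomorphic to $Y\setminus B$, where $Y$ is a cone over a CW-complex and $B\subseteq Y$ is a subcomplex. I would first note that $Y$ itself admits a natural CW structure (adding an apex $0$-cell and one cell $e\times(0,1]$ for each cell $e$ of the base CW-complex), so that $X\cong Y\setminus B$ is simply an open subspace of the CW-complex $Y$. The three bullet points then reduce to the corresponding properties for open subspaces of CW-complexes.

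For normality I would invoke the classical fact that every CW-complex is perfectly normal, and therefore hereditarily normal; this yields normality of $Y\setminus B$ (and hence the Hausdorff property) at once.

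For paracompactness, the two ingredients are paracompactness of CW-complexes (Miyazaki's theorem) and perfect normality, which forces every open subset of $Y$ to be $F_\sigma$. A standard general-topology result states that an $F_\sigma$ open subset of a paracompact Hausdorff space is paracompact, which finishes this bullet.

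For local contractibility I would use that every CW-complex is locally contractible (by induction on the skeleta, using characteristic maps to build contractible neighborhoods around any point), and that local contractibility is trivially inherited by open subspaces. Local path-connectedness and local connectedness then follow for free. The main subtlety to watch is that Proposition~\ref{P:CW} can produce a CW-complex with infinitely -- possibly uncountably -- many cells when the underlying ideal fan has infinitely many cones; however, all the invoked facts about CW-complexes (paracompactness, perfect normality, local contractibility) are valid without any cardinality restriction on the cell structure, so this causes no trouble.
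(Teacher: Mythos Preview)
Your argument is correct and essentially the same as the paper's: both invoke Proposition~\ref{P:CW} and then use standard topological properties of CW-complexes. The paper merely asserts that the three properties are ``easily checked'' to persist after removing a subcomplex and coning, whereas you supply the details by noting that $C(K)$ is itself a CW-complex and that the ideal stacky fan sits inside it as an open subspace, so that perfect normality, hereditary paracompactness (via your $F_\sigma$ argument), and local contractibility transfer directly to the open subset.
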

\begin{proof}
Any CW-complex $Y$ is normal (see \cite[Proposition A.3]{hatcher}), paracompact (see \cite[Ex. 4, Sec. 4G, p. 460]{hatcher}) and locally contractible (see \cite[Prop. A.4]{hatcher}).
It is easily checked that these properties are preserved if we remove a subcomplex $A$ from $Y$ and then take the cone over $Y\setminus A$.
Hence the result follows from Propostion \ref{P:CW}.
\end{proof}

Thus, all of the spaces we shall consider in this paper are normal.  In particular, this proof unifies and generalizes the results \cite[Theorem 5.2]{caporaso-linkage} and \cite[Theorem 4.13]{chan} that the spaces $\Mtrg$ and $\Atrg$, whose definitions we will soon recall, are Hausdorff.

Ideal stacky fans that satisfy a mild finiteness property enjoy further topological properties.

\begin{prop}\label{p:sf-hausdorff}
Let $X=(\coprod \sigma_i)/\!\sim$ be an ideal stacky fan, and assume that each ideal cone $\sigma_i$ is a stacky face of only finitely many other ideal cones.
Then $X$ is:
\begin{itemize}
%\item normal (hence Hausdorff);
\item locally compact;
\item metrizable (hence first countable).
\end{itemize}
If, moreover, $X$ has only countably many connected components, then $X$ is also
\begin{itemize}
\item second countable (hence separable and Lindel\"of).
\end{itemize}
\end{prop}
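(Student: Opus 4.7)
The plan is to upgrade Proposition~\ref{P:CW} under the finiteness hypothesis: I would show that $X$ embeds as the complement of a closed subcomplex inside a \emph{locally finite} CW-complex $W$, and then invoke standard properties of such complexes. Concretely, $W$ is built (as in the proof of Proposition~\ref{P:CW}) from barycentric subdivisions of the $\sigma_i$'s glued via the maps $L_\al$. A piece of $\sigma_i$ lies in the closure of a piece of $\sigma_j$ only if $\sigma_i$ is a stacky face of $\sigma_j$; by hypothesis, for each fixed $\sigma_i$ this happens for only finitely many $\sigma_j$. Hence every cell of $W$ is contained in the closures of only finitely many cells, i.e., $W$ is locally finite.

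For local compactness, a locally finite CW-complex is locally compact in the standard way (each point admits a neighborhood contained in the finite union of closed cells meeting it, and closed cells are compact), and an open subspace of a locally compact Hausdorff space is locally compact; by Corollary~\ref{c:all-hausdorff}, $W$ is Hausdorff, so $X\subseteq W$ inherits local compactness. For metrizability, I would invoke the classical theorem that a CW-complex is metrizable if and only if it is locally finite (see e.g.~Fritsch and Piccinini, \emph{Cellular Structures in Topology}, Theorem~1.5.24), which gives metrizability of $W$ and hence of its subspace $X$; first countability then follows.

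For second countability, suppose $X$ has countably many components. In a locally finite CW-complex each connected component contains only countably many cells: fixing a base-cell, the set of cells reachable by a chain of face-incidences of length at most $k$ is finite, and every cell of the component is reachable by some such chain. Hence $W$ has countably many cells under our assumption; each cell is separable (being homeomorphic to an open Euclidean ball), so $W$ and its subspace $X$ are separable. A separable metrizable space is second countable (and Lindel\"of), giving the last claim.

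The main obstacle is verifying that the ideal-face removals do not disrupt any of these properties. This is handled cleanly by the observation that local compactness (in a Hausdorff setting), metrizability, and separable metrizability all pass to open subspaces. The one substantive external input is the CW-metrization theorem used for metrizability; everything else reduces to the bookkeeping needed to turn the stacky-face finiteness hypothesis into CW local finiteness of $W$.
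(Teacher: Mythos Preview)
Your approach is sound but takes a genuinely different route from the paper's. The paper does not revisit the CW description of Proposition~\ref{P:CW}; instead it observes that the finiteness hypothesis together with Remark~\ref{r:finitely-many-maps} forces the quotient map $\coprod\sigma_i\to X$ to be closed with finite fibers, i.e.\ a \emph{perfect} map, and then uses that perfect images of locally compact spaces are locally compact. Metrizability is obtained indirectly: each connected component, being locally compact, regular and paracompact (the latter two by Corollary~\ref{c:all-hausdorff}), is second countable by a standard exercise, hence metrizable by Urysohn; the last clause follows since a countable disjoint union of second-countable spaces is second countable. Your CW-embedding argument is more geometric and delivers metrizability in a single citation, while the paper's perfect-map argument is pure point-set topology and avoids constructing or analyzing any ambient complex $W$. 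Both approaches convert the stacky-face hypothesis into the same underlying local-finiteness statement---finite fibers of the quotient map on the paper's side, finitely many incident cells on yours. One detail to watch in your write-up: the $W$ of Proposition~\ref{P:CW} is a \emph{cone} over a CW-complex, so at the cone vertex every cell of that component is incident; literal local finiteness of $W$ there does not follow from your barycentric cell-incidence count alone, and you should either invoke the hypothesis on the zero-dimensional cone explicitly or work instead with the locally finite link and treat the cone direction separately.
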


\begin{proof}
Let $X=(\coprod \sigma_i)/\!\sim$ be an ideal stacky fan satisfying the hypothesis.
% with cells $\{\sigma^0_i/\!\sim\}$ and linear maps $\L_\alpha$ generating the relation $\sim$.
Since each $\sigma_i$ is a stacky face of only finitely many other cones, it follows from Remark~\ref{r:finitely-many-maps} that the surjective continuous map
$(\coprod \sigma_i) \ra (\coprod \sigma_i)/\!\sim$ is closed  with finite fibers.  In particular, it is  a \emph{perfect} map, i.e.~a surjective continuous map which is closed and has compact
fibers (see \cite[Ex. 26.12]{munkres}).  Since the topological space $\coprod \sigma_i$ is locally compact (because each ideal cone $\sigma_i$ is),
and it is well-known that  a perfect map preserves local compactness (see e.g. \cite[Ex.~31.7]{munkres}), we conclude that $X$ is locally compact.

Observe next that, since $X$ is locally path connected, $X$ is the disjoint union of its connected components (with the topology of the disjoint union).
In order to prove that $X$ is metrizable, it is sufficient to prove that each of its connected components is metrizable.
Clearly, each connected component $C$ of $X$ inherits from $X$ the property of being locally compact, regular and paracompact (see also Corollary \ref{c:all-hausdorff}).
 Hence $C$ is second countable by
\cite[Ex.~41.10]{munkres}. Since $C$ is regular and second countable, the Urysohn metrization theorem (see \cite[Thm. 34.1]{munkres})
implies that $C$ is metrizable.
Finally, if $X$ has countably many connected components, then since each of these connected components is second countable, it follows that $X$
 itself is second countable.

\end{proof}

\begin{remark}
The hypothesis  that each ideal cone $\sigma_i$ is a stacky face of only finitely many other ideal cones
 cannot be removed from Proposition \ref{p:sf-hausdorff}, as the following example shows.

Consider the stacky fan $X$ whose cones are $\sigma_0:=\{0\}\subseteq \R^0$ and $\sigma_n:=\R_{\geq 0}\subset \R^1$ for
every $n\geq 1$, and whose inclusion of faces are $L_n:\sigma_0\hookrightarrow \sigma_n$ identifying
$\sigma_0$ with the origin of $\sigma_n$, for every $n\geq 1$.
It is easily checked that $X$ is not first countable at the point corresponding to $\sigma_0$ (hence it is not metrizable)
and it is not locally compact.
\end{remark}

There is a natural notion of morphism of ideal stacky fans that generalizes the definition  of morphism of stacky fans (see \cite[Def. 2.1.2]{BMV}).

\begin{defi}\label{d:sfm}
Let $X$ and $Y$ be ideal stacky fans with cells $\{\sigma^0_i/\!\sim\}$ and $\{\tau^0_i/\!\sim\}$, respectively.  A continuous map $f\colon X\ra Y$ is a {\em morphism of ideal stacky fans} if for all $\sigma_i$, there exists a $\tau_j$ and an %lattice-preserving
integral--linear map $L:\sigma_i\ra\tau_j$ such that the following diagram commutes.
\[ \xymatrix{
\sigma_i \ar[r]^L \ar[d]  &\tau_j \ar[d]\\
X \ar[r]^f &Y } \]

\end{defi}
\subsection{Admissible decompositions and stratified quotients}\label{ss:strat}

The aim of this subsection is to introduce admissible decompositions and their corresponding stratified quotients.

\begin{defi}\label{strat-defi}
Let $V$ be a finite-dimensional real vector space, let $X\subseteq V$, and let $G$ be a group acting on $X$. An ideal fan $\Sigma$ with support $X$ is called
an \textit{ideal G-admissible decomposition} of $X$ if:
\begin{enumerate}[(i)]
\item the action of $G$ permutes the cones on $\Sigma$, that is, if ${ \sigma}\in\Sigma$ and $g\in G$, then $\sigma \cdot g\in\Sigma$;
\item the action of $g\in G$ on any $\sigma\in\Sigma$ is given by a lattice-preserving linear map $V\ra V$ taking $\sigma$ to $\sigma\cdot g$.
\end{enumerate}
%If $\Sigma$ is an ideal fan satisfying the above conditions, then we call it an ideal  \textit{G-admissible %decomposition} of $X$.
\end{defi}

Given an ideal $G$-admissible decomposition $\Sigma$ of $X$, we now define a kind of quotient space, called the stratified quotient, obtained by gluing representative cones together. We will see that it is the same as the topological quotient $X/G$ in the case that $\Sigma$ is locally finite, but is better behaved than $X/G$ if not
(see Corollary \ref{p:homeo} and Remark \ref{R:nonlocfin-dec}).

\begin{defi}\label{d:sq}
Let $V$ be finite-dimensional, let $\Sigma$ be an ideal $G$-admissible decomposition of $X\subseteq V$, and let $\{\sigma_i\}$ be a system of representatives for the $G$-orbits of the cones in $\Sigma$. Given two representatives $\sigma_i$ and $\sigma_j$ and an element $g\in G$ such that $\sigma_i\cdot g$ is a face of $\sigma_j$, let $L_{i,j,g}: \sigma_i\hookrightarrow\sigma_j$ be the corresponding lattice-preserving linear map.  (Here, we allow $i=j$.)  Then the \textit{stratified quotient} of $X$ with respect to $\Sigma$ is
$$X/\!/_{\Sigma} G:=\left(\coprod\sigma_i\right)/_{\sim}$$
where $\sim$ is the equivalence generated by the maps $L_{i,j,g}$.  We emphasize that $X/\!/_{\Sigma} G$
is a topological space with respect to the quotient topology.
\end{defi}

With just a little more work, we can define $G$-admissible actions and stratified quotients for any ideal stacky fan.  We will need these more general definitions to take quotients of Outer Space and its simplicial closure in Section~\ref{s:tts}.

\begin{defi}\label{d:admissible-action}
Let $X=(\coprod{\sigma_i})/\!\sim$ be a stacky fan with cells $\{\sigma^0_i/\!\sim\}$. An action of a group $G$ on $X$ is {\em admissible} if for each $g\in G$ and $\sigma_i$, there exists some $\sigma_j$ (necessarily unique) and a lattice-preserving map $L_{g,i}\colon\sigma_i\stackrel{\cong}{\longrightarrow}\sigma_j$ such that the following diagram of sets commutes.
\[ \xymatrix{
\sigma_i \ar[r]^{L_{g,i}} \ar[d]  &\sigma_j \ar[d]\\
X \ar[r]^g &X } \]
We say that $\sigma_i$ and $\sigma_j$ are in the same $G$-orbit if so.
\end{defi}

\begin{defi}\label{d:sq2}
Let $X=(\coprod{\sigma_i})/\!\sim$ be a stacky fan with cells $\{\sigma^0_i/\!\sim\}$ and inclusions of faces $\{L_\al\}$, and suppose we have an admissible $G$-action on $X$.  Let $\{\sigma_j\}_{j\in\mathcal{J}}$ be a set of $G$-orbit representatives.  Then define the \textit{stratified quotient} to be
$$X/\!/\,G= (\coprod_{j\in\mathcal{J}}{\sigma_j})/\!\sim',$$
where the quotient is over all identifications given by composite maps of the form $\sigma_i \stackrel{L_{g,i}}{\longrightarrow}\sigma_i\cdot g\stackrel{L_\alpha}{\longrightarrow}\sigma_j$
for all $g,i,$ and $j$.
\end{defi}

\begin{remark}\label{r:spec-cases}
Definitions~\ref{strat-defi} and~\ref{d:sq} are just special cases of Definitions~\ref{d:admissible-action} and~\ref{d:sq2}, namely, in the case that the ideal stacky fan is obtained from an ideal polyhedral fan.
\end{remark}

\begin{prop}\label{p:isstackyfan}
Let $X=(\coprod{\sigma_i})/\!\sim$ be an ideal stacky fan with cells $\{\sigma^0_i/\!\sim\}$ and inclusions of faces $\{L_\al\}$, and suppose we have an admissible $G$-action on $X$.  Let $\{\sigma_j\}_{j\in\mathcal{J}}$ be a set of $G$-orbit representatives.
Then the stratified quotient $X/\!/\, G$ is an ideal stacky fan with cells $\{\sigma^0_j/\!\sim'\}_{j\in\mathcal{J}}$, and the map $X\ra X/\!/\,G$ is a morphism of ideal stacky fans.
\end{prop}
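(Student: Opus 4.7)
The plan is to equip $X/\!/\,G$ with the ideal stacky fan structure whose cones are the orbit representatives $\{\sigma_j\}_{j \in \mathcal{J}}$ (each with its ambient vector space and lattice), and whose inclusions of faces are the composite maps $L_\alpha \circ L_{g,i}: \sigma_i \to \sigma_j$ for $i, j \in \mathcal{J}$ and $g \in G$ that identify $\sigma_i$ with a face of $\sigma_j$. Each such composite is lattice-preserving by composition, using that $L_{g,i}$ is a lattice isomorphism by Definition \ref{d:admissible-action} and that $L_\alpha$ is a lattice-preserving face inclusion in $X$ by assumption. The natural map $\phi: X \to X/\!/\,G$ is defined by lifting $x \in X$ to some $\tilde x \in \sigma_k$, choosing $g \in G$ with $\sigma_k \cdot g = \sigma_j$ for some $j \in \mathcal{J}$, and setting $\phi(x) = [L_{g,k}(\tilde x)]_{\sim'}$; continuity is immediate once well-definedness is verified, since the composition $\coprod \sigma_i \to X \to X/\!/\,G$ is continuous cone-by-cone.

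The central task is to show that $\{\sigma_j^0/\!\sim'\}_{j \in \mathcal{J}}$ partitions $X/\!/\,G$; I expect this to be the main obstacle. My approach is to first construct a set-theoretic bijection $X/\!/\,G \leftrightarrow X/G$ between the stratified quotient and the naive orbit set. On one hand, each representative $\sigma_j$ maps into $X$ via the quotient $\coprod \sigma_i \twoheadrightarrow X$ and then into $X/G$; this descends to a map $\Psi: X/\!/\,G \to X/G$ because each generating identification $L_\alpha(L_{g,i}(x))$ couples a $G$-action (trivial in $X/G$) with a face identification in $X$ (also trivial in $X/G$). On the other hand, a map $X/G \to X/\!/\,G$ is built by the same recipe defining $\phi$: for $x \in X$ lying in some cell $\sigma_k^0/\!\sim$, one sends $[x]_G \mapsto [L_{g,k}(\tilde x)]_{\sim'}$ for any choice of $g$ sending $\sigma_k$ to a representative. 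Independence from the choice of $g$ uses that any two such choices differ by an element of $G$ stabilizing the representative, and that stabilizer element acts on $\sigma_j$ by a map which is itself a generating identification of $\sim'$ (with $L_\alpha$ taken to be the identity on $\sigma_j$).

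Once the bijection $X/\!/\,G \cong X/G$ is in hand, cell disjointness follows immediately: if $x \in \sigma_{j_1}^0$ and $y \in \sigma_{j_2}^0$ satisfy $x \sim' y$, then their classes coincide in $X/G$, so some $g \in G$ sends $x$ to a lift of $y$ in $X$. Both $L_{g, j_1}(x) \in (\sigma_{j_1} \cdot g)^0$ and $y \in \sigma_{j_2}^0$ are interior points of cones in the presentation of $X$, and the cell disjointness axiom for $X$ forces $\sigma_{j_1} \cdot g = \sigma_{j_2}$; since both cones are orbit representatives, $j_1 = j_2$.

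For the final claim that $\phi$ is a morphism of ideal stacky fans in the sense of Definition \ref{d:sfm}, it suffices to observe that for each cone $\sigma_k$ in the presentation of $X$, the map $L_{g,k}: \sigma_k \to \sigma_j$ used to define $\phi$ is integral linear, and by construction the induced diagram commutes. The main subtlety, underlying the whole argument, is verifying that the admissible $G$-action interacts correctly with the face inclusions $\{L_\alpha\}$ of $X$, which is exactly what is needed for the relations $\sim$, $\sim'$, and the $G$-action to mesh compatibly in the bijection above.
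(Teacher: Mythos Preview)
Your proposal is correct and follows essentially the same strategy as the paper, though you supply considerably more detail. The paper's proof is terse: for cell disjointness it simply asserts that if the images of $\sigma_j^0$ and $\sigma_{j'}^0$ overlap in $X/\!/\,G$ then some $g\in G$ takes $\sigma_j$ to $\sigma_{j'}$, hence $j=j'$; and for the morphism claim it invokes the universal property of the quotient map $\coprod\sigma_i\to X$ to produce the dotted arrow in a commutative square, with the cone-wise maps $L_{g,i}$ witnessing the morphism condition.

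The one organizational difference worth noting is that you establish the set-theoretic bijection $X/\!/\,G\leftrightarrow X/G$ as an intermediate step in order to justify the key implication ``$x\sim' y$ implies $x$ and $y$ are $G$-related in $X$.'' The paper instead defers this bijection to the subsequent Proposition~\ref{p:strat-is-global} (where it is upgraded to a homeomorphism) and treats the implication here as evident from the form of the generating relations $L_\alpha\circ L_{g,i}$. Your route is arguably more careful, since it makes explicit why a chain of $\sim'$-identifications collapses to a single $G$-action on the level of cells; the paper's route is shorter but leaves that unpacking to the reader. Neither approach requires an idea the other lacks.
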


\begin{proof}
The only part of the first claim that needs checking is that the obvious map from $\coprod_{j\in\mathcal{J}} (\sigma^0_j/\!\sim')$ to $X$ is a bijection.  It is a surjection since $\{\sigma_j\}$ is a set of representatives; on the other hand, if the images of $\sigma^0_j$ and $\sigma^0_{j'}$ overlap, then some $g\in G$ takes $\sigma_j$ to $\sigma_{j'}$, so $j=j'$.

To show that $X\ra X/\!/\,G$ is a morphism, consider the following diagram of topological spaces.
\[ \xymatrix{
\coprod \sigma_i \ar[r] \ar[d]^{/\{L_\al\!\}}  &\displaystyle\coprod_{j\in\mathcal{J}}{\sigma_j} \ar[d]^{/\{L_\al \circ L_{g,i}\!\}}\\
X \ar@{.>}[r] &X/\!/\,G  }\]
The top map sends each cone $\sigma_i$ to its representative via a lattice-preserving linear map $L_{g,i}$.  The bottom map exists (and is continuous), and makes the square commute, because the left arrow is a quotient map.
The maps $L_{g,i}$ comprising the top arrow of the commmutative square also give precisely the condition that $X\ra X/\!/\,G$ is a morphism of ideal stacky fans.
\end{proof}

A priori, the definition of a stratified quotient depends on the choice of the representatives for the $G$-orbits on $\Sigma$. However, we will prove %in Proposition \ref{d:indip-choice}
next that this is not the case
and therefore that the above definition is well-posed, not just at the level of topological spaces (see Proposition~\ref{p:strat-is-global}), but as ideal stacky fans.

\begin{prop}\label{d:indip-choice}
  The construction of $X/\!/\,G$ in Definition \ref{d:sq2} does not depend on our choice of
  representatives $\{\sigma_j\}_{j\in\mathcal{J}}$. More precisely, suppose that $\{\sigma'_j\}_{j\in\mathcal{J}}$ is another choice of representatives such that
  ${ \sigma}_j'$ and ${ \sigma}_j$ are $G$-equivalent for each $j$. Let $X/\!/\,G$ and $X\wt{/\!/\,} G$ denote the respective corresponding ideal stacky fans. Then there is an isomorphism
  of stacky fans between $\, X/\!/\, G \,$ and $X\wt{/\!/\,}G$.
\end{prop}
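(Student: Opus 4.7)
The plan is to construct an explicit isomorphism $\Phi: X/\!/\,G \stackrel{\cong}{\to} X\wt{/\!/\,}G$ of ideal stacky fans, using the admissibility maps arising from the $G$-equivalence of the representatives. For each $j \in \mathcal{J}$, fix some $g_j \in G$ with $\sigma_j \cdot g_j = \sigma_j'$; admissibility (Definition \ref{d:admissible-action}) supplies a lattice-preserving isomorphism $L_{g_j, j}: \sigma_j \stackrel{\cong}{\to} \sigma_j'$. Assembling these yields a homeomorphism $\tilde\Phi = \coprod_j L_{g_j,j}: \coprod_j \sigma_j \to \coprod_j \sigma_j'$, which should descend to the desired $\Phi$.

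The main step is to verify that $\tilde\Phi$ respects the two equivalence relations. Given a generating identification $y = L_\alpha(L_{g, i}(x))$ of $X/\!/\,G$, with $x \in \sigma_i$ and $y \in \sigma_j$, set $g' := g_i^{-1} g g_j$. Then $\sigma_i' \cdot g' = \sigma_i \cdot g \cdot g_j$ is a face of $\sigma_j' = \sigma_j \cdot g_j$ because the $G$-action preserves faces, so there is a face inclusion $L_\beta: \sigma_i \cdot g \cdot g_j \hookrightarrow \sigma_j'$, and the composite $L_\beta \circ L_{g', i'}: \sigma_i' \to \sigma_j'$ is a generating identification in $X\wt{/\!/\,}G$. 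A direct calculation shows that both $L_\beta(L_{g', i'}(L_{g_i, i}(x)))$ and $L_{g_j, j}(y)$ are points of $\sigma_j'$ whose images in $X$ coincide (both equal $[x] \cdot g g_j$), so they are related by a chain of face identifications in the ambient stacky fan structure on $X$; using admissibility maps to shuttle any intermediate cells to chosen representatives, this chain is realized by a sequence of generating identifications in $X\wt{/\!/\,}G$, giving $L_\beta(L_{g', i'}(L_{g_i, i}(x))) \sim' L_{g_j, j}(y)$. Chaining with the identification via $L_\beta \circ L_{g', i'}$ itself yields $L_{g_i, i}(x) \sim' L_{g_j, j}(y)$, as required.

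Once $\tilde\Phi$ is shown to descend to a continuous map $\Phi$, that map is automatically a morphism of ideal stacky fans, since its restriction to each representative cell is the lattice-preserving linear isomorphism $L_{g_j, j}$ (Definition \ref{d:sfm}). Applying the same construction with $\{g_j^{-1}\}$ produces an inverse morphism $X\wt{/\!/\,}G \to X/\!/\,G$, making $\Phi$ an isomorphism of ideal stacky fans.

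The main obstacle is the bookkeeping in the second paragraph: Definition \ref{d:admissible-action} guarantees existence but not canonical naturality for the admissibility maps $L_{g,i}$, so composites cannot be treated as literal equalities and must instead be compared via their common action on $X$, with the residual point-ambiguity controlled via the cell-stratification $X = \coprod_i (\sigma_i^0/\!\sim)$ guaranteed by Definition \ref{d:sf}.
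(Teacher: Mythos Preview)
Your approach is essentially the paper's: pick $g_j$ with $\sigma_j\cdot g_j=\sigma_j'$, assemble $\coprod L_{g_j,j}$, and use $\{g_j^{-1}\}$ for the inverse. The paper's proof is in fact much terser than yours---it simply asserts that the map $(g_j)_{j\in\mathcal{J}}$ descends, without any of your bookkeeping.

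One point worth flagging: in your middle paragraph you assert that ``the $G$-action preserves faces,'' so that a face inclusion $L_\beta\colon \sigma_i\cdot g g_j \hookrightarrow \sigma_j'$ lies in the given collection $\{L_\alpha\}$ of the stacky fan $X$. Admissibility (Definition~\ref{d:admissible-action}) only demands a lattice-preserving bijection on each cone commuting with the map to $X$; it does not literally say that conjugating an $L_\alpha$ by the $G$-action yields another $L_\alpha$. Your subsequent ``chain of face identifications shuttled to representatives'' is the right way to close this, but note that the same issue reappears there: each shuttled step again asks for a face inclusion landing in a representative. The paper's proof glosses over exactly this point, and the cleanest way to resolve it for both arguments is to appeal to (the proof of) Proposition~\ref{p:strat-is-global}, which identifies both $X/\!/\,G$ and $X\wt{/\!/\,}G$ with $X/G$ independently of the choice of representatives; then checking that the resulting bijection is a stacky-fan morphism is immediate from your $L_{g_j,j}$ description on cells.
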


\begin{proof}
  For each $j$, choose $g_j \in G$ with
  \[ \sigma_{j}\cdot g_j = { \sigma_{j}'}. \]
  Then we obtain a map
  \[\coprod_{j\in\mathcal{J}}(\sigma_j/\!\sim)\xrightarrow{(g_j)_{j\in\mathcal{J}}}\coprod_{j\in\mathcal{J}}(\sigma_j/\!\sim)\]
    descending to a map
  \[X/\!/\, G \longrightarrow X\wt{/\!/\,} G,\]
  and this map is an isomorphism of stacky fans, as evidenced by the inverse map $X\wt{/\!/\,} G \rightarrow X/\!/\, G$ constructed from the elements $\{g_j^{-1}\}_{j\in\mathcal{J}}$.
\end{proof}

From Corollary \ref{c:all-hausdorff}, we get the following result.

\begin{cor}\label{c:hausdorff}
A stratified quotient is normal (hence Hausdorff).
\end{cor}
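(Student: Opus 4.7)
The plan is to reduce this corollary immediately to two previously established results. First, I would invoke Proposition~\ref{p:isstackyfan}, which asserts that for an ideal stacky fan $X$ carrying an admissible $G$-action, the stratified quotient $X/\!/\,G$ is itself an ideal stacky fan (with cells the $G$-orbit representatives of cells of $X$, glued via the composite identifications appearing in Definition~\ref{d:sq2}). By Proposition~\ref{d:indip-choice}, this ideal stacky fan structure does not depend on the choice of orbit representatives, so the statement is unambiguous.

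Once $X/\!/\,G$ is recognized as an ideal stacky fan, I would apply Corollary~\ref{c:all-hausdorff}, which says that every ideal stacky fan is normal, and in particular Hausdorff. Composing these two observations yields the claim directly, so the body of the proof is essentially a one-line citation.

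The only step requiring any care is to confirm that the hypotheses are correctly transferred: namely, Proposition~\ref{p:isstackyfan} is stated for ideal stacky fans $X$ with an admissible $G$-action in the sense of Definition~\ref{d:admissible-action}, and Remark~\ref{r:spec-cases} observes that the set-up of Definitions~\ref{strat-defi} and \ref{d:sq} (an ideal $G$-admissible decomposition $\Sigma$ of $X\subseteq V$) is a special case of this more general framework. Thus Corollary~\ref{c:hausdorff} as stated applies to both flavors of stratified quotient introduced in this subsection. No genuine obstacle arises; the corollary is a formal consequence of Proposition~\ref{p:isstackyfan} and Corollary~\ref{c:all-hausdorff}.
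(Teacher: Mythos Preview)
Your proposal is correct and follows exactly the paper's approach: the paper simply notes that the corollary follows from Corollary~\ref{c:all-hausdorff}, implicitly using Proposition~\ref{p:isstackyfan} to know that a stratified quotient is an ideal stacky fan. Your additional remarks invoking Proposition~\ref{d:indip-choice} and Remark~\ref{r:spec-cases} are accurate but not needed for the argument.
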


In fact, stratified quotients are global quotients. (By the global quotient $X/G$ we just mean the set of $G$-orbits of $X$, endowed with the quotient topology.)

\begin{prop}\label{p:strat-is-global}
Let $X=(\coprod{\sigma_i})/\!\sim$ be an ideal stacky fan, and suppose we have an admissible $G$-action on $X$.  Then we have a homeomorphism
$$X/G \cong X/\!/\,G.$$
\end{prop}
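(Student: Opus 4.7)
The plan is to construct mutually inverse continuous maps $f \colon X/\!/G \to X/G$ and $\bar h \colon X/G \to X/\!/G$, which then give the homeomorphism.

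First, using the universal properties of the two quotient constructions, I would produce $f$. The composition
\[
\coprod_{j \in \mathcal{J}} \sigma_j \hookrightarrow \coprod_i \sigma_i \xrightarrow{\pi_X} X \xrightarrow{\pi_G} X/G
\]
is continuous, and every generator of $\sim'$---a composite $L_\alpha \circ L_{g, j}$ of an admissibility map followed by a face inclusion---is killed by $\pi_G \circ \pi_X$ (face inclusions are already identified by $\sim$, and admissibility maps become the identity after $\pi_G$). Hence the composition descends to a continuous map $f \colon X/\!/G \to X/G$. Surjectivity is immediate: given $x \in X$ with lift $y \in \sigma_i$, choose $g_i \in G$ with $\sigma_i \cdot g_i = \sigma_{j(i)} \in \{\sigma_j\}_{j \in \mathcal{J}}$, and then $L_{g_i, i}(y) \in \sigma_{j(i)}$ projects to $[x]_G$. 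Injectivity reduces to the following: if $p \in \sigma_j$ and $p' \in \sigma_{j'}$ with $j, j' \in \mathcal{J}$ lie in the same $G$-orbit of $X$, then they are $\sim'$-equivalent; this is shown by taking a chain of $\sim$-identifications in $Y = \coprod_i \sigma_i$ between $L_{g, j}(p)$ and $p'$ (for the relating element $g \in G$) and re-factoring each step through the orbit representatives.

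To construct the inverse, I fix for each $\sigma_i$ an element $g_i \in G$ with $\sigma_i \cdot g_i = \sigma_{j(i)}$, taking $g_i = e$ when $i \in \mathcal{J}$. Define $h \colon \coprod_i \sigma_i \to X/\!/G$ by $h|_{\sigma_i}(y) = q(L_{g_i, i}(y))$, where $q$ is the quotient onto $X/\!/G$. This is continuous on each cone, hence on the coproduct. The main obstacle is showing that $h$ descends through $X$ and $X/G$---that is, that $h$ is $\sim$-invariant and $G$-invariant. For $\sim$-invariance, given a face inclusion $L_\alpha \colon \sigma_i \hookrightarrow \sigma_{i'}$, the composite
\[
\sigma_{j(i)} \xrightarrow{L_{g_i^{-1}, j(i)}} \sigma_i \xrightarrow{L_\alpha} \sigma_{i'} \xrightarrow{L_{g_{i'}, i'}} \sigma_{j(i')}
\]
can be re-expressed as $L_\beta \circ L_{g_{i'} g_i^{-1}, j(i)}$ for some face inclusion $L_\beta$ into $\sigma_{j(i')}$, which is exactly a generator of $\sim'$; hence $h(y)$ and $h(L_\alpha(y))$ agree in $X/\!/G$. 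For $G$-invariance, using the composition rule $L_{gh, i} = L_{h, i \cdot g} \circ L_{g, i}$, one checks that $h(g \cdot y)$ and $h(y)$ are represented in $\sigma_{j(i)}$ by points differing by the action of an element of $\Stab(\sigma_{j(i)})$, and such actions are absorbed into $\sim'$ via the trivial face inclusion $\sigma_{j(i)} \hookrightarrow \sigma_{j(i)}$.

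Thus $h$ descends to a continuous map $\bar h \colon X/G \to X/\!/G$. Since both $\bar h$ and $f$ restrict to the identity on each $\sigma_j^0/\!\sim'$ for $j \in \mathcal{J}$, they are mutually inverse, so $f$ is a homeomorphism. The main technical difficulty lies in the bookkeeping with admissibility maps and stabilizers needed to verify the $\sim$-invariance and $G$-invariance of $h$.
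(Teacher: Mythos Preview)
Your proposal is correct and follows essentially the same approach as the paper: both construct mutually inverse continuous maps between $X/G$ and $X/\!/G$ by using the inclusion $\coprod_{\mathcal{J}}\sigma_j \hookrightarrow \coprod_i \sigma_i$ and a projection $\coprod_i \sigma_i \to \coprod_{\mathcal{J}}\sigma_j$ sending each cone to its orbit representative. The paper packages this as a single commutative square (with the descended maps called $f$ and $h$) and then reads off $f\circ h = \mathrm{id}$ and $h\circ f = \mathrm{id}$ from $\pi\circ i = \mathrm{id}$ and the fact that $i\circ\pi$ lands in the same $G$-orbit, whereas you spell out the descent conditions ($\sim$-invariance and $G$-invariance) explicitly; your bijectivity check for $f$ is in fact redundant once you build $\bar h$, but harmless.
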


\begin{proof} As usual, let $L_\al$ denote the face inclusions of $X$, let $L_{g,i}$ be the linear map induced on a cone $\sigma_i$ by $g\in G$ as in Definition~\ref{d:admissible-action}, and let $\mathcal{J}$ denote a subcollection of cones that form a set of orbit representatives in the $G$-action on $X$.

We have a surjective map $\pi\colon \coprod{\sigma_i}\twoheadrightarrow \coprod_{j\in\mathcal{J}}{\sigma_j}$ sending each cone to its representative, and a section $i\colon \coprod_{j\in\mathcal{J}}{\sigma_j}\hookrightarrow\coprod{\sigma_i}.$
The space $X/G$ is obtained from $\coprod{\sigma_i}$ by quotienting by the face inclusions $L_\al$ to obtain $X$, and then taking the quotient of $X$ by the action of $G$.
The space $X/\!/G$ is obtained from $\coprod_{j\in\mathcal{J}}{\sigma_j}$ by quotienting by maps of the form $L_\al\circ L_{g,i}$.  These maps are related by the diagram below, and give rise to the maps $f$ and $g$, depicted by dotted arrows, which make both the forward and the backward squares commute.
$$\xymatrix{\coprod{\sigma_i} \ar[dd]^{/L_\al,G}  \ar[rr]<+.5ex>^{\pi} &&   **[r]{\displaystyle \coprod_{j\in\mathcal{J}}{\sigma_j} \ar[ll]<+.5ex>^i} \ar[dd]^{/L_\al\! \circ L_{g,i}}\\
&&\\
X/G \ar@{.>}[rr]<+.5ex>^f &&X/\!/\,G \ar@{.>}[ll]<+.5ex>^h }$$
Finally, from the fact that $\pi\circ i = {\rm id}$ and $i\circ\pi$ sends a point to something in its $G$-orbit, we conclude that $h\circ f = {\rm id}$ and $f\circ h = {\rm id}$.
\end{proof}

\begin{remark}\label{quotient-object}
It follows from Proposition~\ref{p:strat-is-global} that the stratified quotient $X/\!/G$ really is a quotient object with respect to $G$-equivalence in the category of ideal stacky fans, in the sense that any morphism $X\ra Y$ that respects $G$-equivalence factors uniquely as a composite map $X\ra X/\!/G \ra Y$ of ideal stacky fans.
\end{remark}

Specializing Proposition \ref{p:strat-is-global} to the case of admissible $G$-actions on the ideal stacky fans coming from a locally finite $G$-admissible decomposition of $X\subseteq V$, we get the following

\begin{cor}\label{p:homeo}
Suppose that $\Sigma$ is a locally finite $G$-admissible decomposition of $X\subseteq V$, and let $X_\Sigma$ be the stacky fan structure on $X$ (see Lemma~\ref{L:locally-finite}).
Then we have a homeomorphism
$$X/G\cong X_\Sigma/\!/G.$$
\end{cor}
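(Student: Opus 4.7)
The plan is to deduce this corollary by combining two ingredients already in place: Lemma~\ref{L:locally-finite}, which under the local finiteness hypothesis identifies the ideal-stacky-fan space $X_\Sigma = (\coprod \sigma_i)/\!\sim$ with its support $X \subseteq V$ as topological spaces; and Proposition~\ref{p:strat-is-global}, which says that for any admissible $G$-action on an ideal stacky fan, the stratified quotient agrees with the global quotient.

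First, I would verify that the given $G$-admissible decomposition in the sense of Definition~\ref{strat-defi} yields an admissible $G$-action on the ideal stacky fan $X_\Sigma$ in the sense of Definition~\ref{d:admissible-action}. This is essentially automatic: condition (i) of Definition~\ref{strat-defi} says that for every $g\in G$ and every $\sigma\in\Sigma$ there is a (unique) $\sigma'\in\Sigma$ with $\sigma\cdot g=\sigma'$, while condition (ii) provides a lattice-preserving linear map $L_{g,\sigma}:\sigma\stackrel{\cong}{\to}\sigma'$ in $V$ that realizes this action. The compatibility diagram in Definition~\ref{d:admissible-action} then commutes tautologically, because the action on $X_\Sigma$ is defined through the action on $V$ restricted to the cones.

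Second, by Lemma~\ref{L:locally-finite}, local finiteness of $\Sigma$ gives a canonical homeomorphism $X_\Sigma \xrightarrow{\cong} X$, and this homeomorphism is manifestly $G$-equivariant by construction (on both sides, the action is inherited from the linear action on $V$). Passing to quotients by $G$ with the quotient topology, a $G$-equivariant homeomorphism descends to a homeomorphism $X_\Sigma/G \cong X/G$. Finally, applying Proposition~\ref{p:strat-is-global} to the ideal stacky fan $X_\Sigma$ with its admissible $G$-action yields $X_\Sigma/G \cong X_\Sigma/\!/G$. Chaining these two homeomorphisms gives the desired $X/G \cong X_\Sigma/\!/G$.

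There is no substantive obstacle here; the corollary is really a packaging of Proposition~\ref{p:strat-is-global} together with Lemma~\ref{L:locally-finite}, observing that Definitions~\ref{strat-defi}--\ref{d:sq} are the specialization of Definitions~\ref{d:admissible-action}--\ref{d:sq2} to the fan case (as already noted in Remark~\ref{r:spec-cases}). The only thing one should pause to check is that the $G$-equivariance of the map $X_\Sigma \to X$ in Lemma~\ref{L:locally-finite} is compatible with the two different descriptions of the $G$-action — one as permutations of cones with prescribed gluing maps, the other as the restriction to $X$ of the linear $G$-action on $V$ — but these are literally the same action by Definition~\ref{strat-defi}(ii).
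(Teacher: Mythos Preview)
Your proposal is correct and follows essentially the same approach as the paper: the paper's proof simply cites the homeomorphisms $X\cong X_\Sigma$ (Lemma~\ref{L:locally-finite}) and $X_\Sigma/G\cong X_\Sigma/\!/G$ (Proposition~\ref{p:strat-is-global}) and is done. Your version is more detailed---spelling out the $G$-equivariance of the map in Lemma~\ref{L:locally-finite} and the verification (via Remark~\ref{r:spec-cases}) that the action is admissible in the stacky-fan sense---but the logical structure is identical.
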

\noindent In other words, for locally finite fans that are $G$-admissible decompositions, stratified quotients are global quotients.
\begin{proof}
We have homeomorphisms $X\cong X_\Sigma$ and $X_\Sigma/G \cong X_\Sigma/\!/G$ by Lemma~\ref{L:locally-finite} and Proposition~\ref{p:strat-is-global}.
\end{proof}

\begin{remark}\label{R:nonlocfin-dec}
 For an arbitrary (not necessarily locally finite) $G$-admissible decomposition $\Sigma$ of $X\subseteq V$,
 the continuous bijection $X_{\Sigma}\to X$ of Lemma~\ref{L:locally-finite} induces a continuous  bijection
$$X /\!/_{\Sigma}G\cong X_{\Sigma}/\!/G\cong X_{\Sigma}/G\longrightarrow X/G,$$
where the homeomorphisms on the left hand side follow from Remark~\ref{r:spec-cases} and Proposition \ref{p:strat-is-global}. However, it is easy to construct examples of non-locally finite decompositions $\Sigma$ where the above continuous bijection is not a homeomorphism, since the space on the left hand side is always Hausdorff (see Corollary \ref{c:hausdorff}) while the space on the right hand side could very well be non-Hausdorff.
\end{remark}

\section{Tropical Teichm\"uller space}\label{s:tts}

The aim of the present section is to introduce two spaces: the \emph{pure tropical Teich\-m\"uller space}, which we will denote by $\Tgp$, together with a closure of it, which we will call \emph{tropical Teichm\"uller space} and denote by $\Tg$. These spaces parametrize stable metric graphs (respectively stable metric weighted graphs) of genus $g$ together with a fixed isomorphism of their fundamental group with the free group on $g$ letters $F_g$.%, which we will call a marking.

The idea of considering spaces parametrizing stable metric graphs together with such a marking is well-known in geometric group theory and is due to Culler and Vogtmann, who introduced in \cite{cullervogtmann} the space $X_g$, now called Outer space, parametrizing such objects. There, the authors endow Outer space with a topology by embedding it in an infinite-dimensional vector space; they also consider its closure $\ov X_g$ in this space.
The outer automorphism group $\Out(F_g)$ of the free group $F_g$ acts properly discontinuously (hence with finite stabilizers) on $X_g$ by changing the marking.  The approach of deducing cohomological information on the group $\Out(F_g)$ via its action on $X_g$ has been extremely fruitful; we refer the reader to \cite{vogtmann} and \cite{vogICM} for a survey of the known results.

The objects parametrized by our pure tropical Teichm\"uller space are essentially the same objects parametrized by Outer space, the only difference being one of convention: we do not
normalize edge lengths as Culler and Vogtmann do.
 We do, however, endow $\Tgp$ and $\Tg$
 with a topology in a different way: our strategy is to consider cells consisting of marked metric graphs (resp. marked weighted metric graphs) of the same topological type and glue them together in a similar way to what was done in \cite{BMV} in order to construct the moduli space of tropical curves.  The resulting spaces are manifestly ideal stacky fans.
In fact $\Tgp$ is homeomorphic to (non-normalized) Outer space  (see Corollary~\ref{T:tgp-is-xg}); in other words, the induced topology on Outer space coincides with the simplicial topology on it.  On the other hand, the ideal stacky fan $\Tg$ obtained from the simplicial structure on $\ov X_g$ has a topology that is actually finer than the subspace topology on $\ov X_g$; see Lemma~\ref{L:locally-finite}.

We have rechristened $X_g$ in this paper simply to emphasize that our reasons and context for studying it are quite different from the usual ones; and to emphasize that $\Tgp$ is an object in the category of ideal stacky fans.
We next describe the construction of $\Tgp$ and of $X_g$, following \cite[Part I]{vogtmann}.

\subsection{Pure tropical Teichm\"uller space}\label{ss:tgp}

Recall that given a graph $\Gamma$ (possibly with loops or multiple edges), with vertex set $V(\Gamma)$ and edge set $E(\Gamma)$,
the  \textit{genus} of $\Gamma$ is  $g(\Gamma):=|E(\Gamma)|-|V(\Gamma)|+1$; it is the dimension of the vector space generated by the cycles of $\Gamma$.
The \textit{valence} of a vertex $v$, $\val(v)$, is defined as the number of edges incident to $v$, with the usual convention that a loop around a vertex $v$ counts twice.
We say that a graph $\Gamma$ is {\em stable} if any vertex of $\Gamma$ has valence at least two.

%We can now define pure tropical curves.

\begin{defi}\label{metric-graph}
A {\em metric graph} is a graph $\Gamma$ together with a length function $l:E(\Gamma)\to \R_{>0}$.
The {\em volume} of a metric graph $(\Gamma,l)$ is the sum of the lengths of the edges of $\Gamma$.  We can regard $(\Gamma,l)$ as a metric topological space.

A {\em pure tropical curve} $C$ of genus $g$ is a metric graph $(\Gamma,l)$  such that $\Gamma$ is a stable graph of genus $g(\Gamma)=g$.

% note that its fundamental group is free on $g(\Gamma)$ generators.
\end{defi}

\noindent Pure tropical curves are special tropical curves, as we will see in Subsection \ref{ss:tg}. The term pure was introduced by L. Caporaso in  \cite{capsurvey}.

Now fix a graph $R_g$ with one vertex $v$ and $g$ edges (a {\em rose with $g$-petals}) and identify the free group $F_g=F\langle x_1,\dots, x_g \rangle$ with $\pi_1(R_g, v)$ in such a way that each generator $x_i$ corresponds to a single oriented edge of $R_g$. Under this identification, reduced words in $F_g$ correspond to reduced edge-path loops starting at the vertex $v$  of $R_g$, and  therefore  we will make no distinction between them.

\begin{defi}\label{D:marked-graph}
\noindent
\begin{enumerate}[(i)]
\item Let $\Gamma$ be a graph of genus $g$. A {\em marking} on $\Gamma$ is a homotopy equivalence $h:R_g\to\Gamma$.  Here, $\Gamma$ is viewed as a 1-complex, with a free fundamental group of rank $g$. We say that the pair $(\Gamma,h)$ is a {\em marked graph} of genus $g$.
We regard $(\Gamma,h)$ and $(\Gamma',h')$ as equivalent if there is an isomorphism of 1-complexes $\gamma:\Gamma\to\Gamma'$ with $h\circ \gamma$ homotopic to $h'$.

We are always interested in marked graphs only up to the above equivalence, but for simplicity will just say ``marked graph'' instead of ``equivalence class of marked graphs'' throughout.
\item A {\em marked metric graph}  $(\Gamma,l,h)$ consists of a metric graph $(\Gamma,l)$ together with a marking $h:R_g\to \Gamma$ of the underlying graph.
We say that two marked metric graphs $(\Gamma,l,h)$ and $(\Gamma',l',h')$ are equivalent if there is an isometry $\gamma:(\Gamma,l)\to (\Gamma', l')$ with $h\circ \gamma$ homotopic to $h'$.  Again, we will always consider marked metric graphs only up to equivalence in this section, and for simplicity we will write ``marked metric graph'' instead of ``equivalence class of marked metric graphs.''
\item A {\em marked pure tropical curve} $(C,h)=(\G,l,h)$ of genus $g$ is a pure tropical curve $C=(\Gamma,l)$ of genus $g$
together with a marking $h:R_g\to \Gamma$ of the underlying graph $\G$, up to equivalence.
The stable marked graph $(\Gamma,h)$ is called the combinatorial type of the marked pure tropical curve.
\end{enumerate}
\end{defi}

Our next goal is to define the pure tropical Teichm\"uller space $\Tgp$, a space that will parametrize marked pure tropical curves of genus $g$.
%equivalence classes of marked metric graphs $(\Gamma,l,h)$, where $\Gamma$ is a stable graph of genus~$g$.
We will show that it is an ideal stacky fan in Proposition~\ref{p:tgp-is-isf}.  We start by defining its cells.

\begin{defi}\label{D:simplicial-cones}
Given a stable marked graph $(\Gamma, h)$ of genus $g$, fix a numbering on its set of edges $E(\Gamma)$. Let $\C_{(\Gamma,h)}^0:=\mathbb R^{|E(\Gamma)|}_{>0}$ be the open simplicial cone  of  $\mathbb R^{|E(\Gamma)|}$, and write $\ov{C_{(\G,h)}} = \mathbb R^{|E(\Gamma)|}_{\ge0}$ for its closure.
\end{defi}
\noindent So, for example, graphs with $k$ edges correspond to cones of dimension $k$, and an %easy
Euler characteristic argument shows that all cones have dimension at most $3g-3$.

\begin{remark}\label{r:no-stackiness}
The points of $\C_{(\Gamma,h)}^0$ are in bijection with (equivalence classes of)
 marked metric graphs $(\Gamma,l,h)$ for some length function $l$, or in other words with  pure tropical curves whose underlying combinatorial type is $(\Gamma, h)$.  Indeed, a point in $\mathbb R^{|E(\Gamma)|}_{>0}$ determines a length function $l\colon E(\G)\ra\R_{>0}$.  If $l'=l\circ p$ is another length function, where $p\colon E(\G)\ra E(\G)$ is a permutation induced by a nontrivial isometry $\psi$, then the marking $\psi\circ h$ must be different from $h$; for since $\Gamma$ is not homeomorphic to a circle, $\psi$ cannot fix every loop of $\Gamma$.
\end{remark}

The pure tropical Teichm\"uller space will be obtained by gluing  certain partial closures  $\C_{(\Gamma,h)}$ of $\C^0_{(\Gamma,h)}$ along ideal faces
corresponding to specializations of $(\Gamma,h)$, as we are now going to define.

\subsubsection{Specializations of marked graphs}\label{specializations}

\begin{defi}\label{D:spec-pure}
Let $(\Gamma,h)$ and $(\Gamma',h')$ be two marked graphs of the same genus $g$.
We say that $(\Gamma,h)$ {\em specializes} to $(\Gamma',h')$, and we write $(\Gamma,h)\rightsquigarrow (\Gamma',h')$, if
there is a surjective morphism of  graphs $\pi:\Gamma\to\Gamma'$ induced by contracting an acyclic subgraph of $\Gamma$ making the following diagram homotopy commutative.
\begin{equation*}
\xymatrix{
{R_g} \ar[r]^h \ar[dr]_{h'}& {\Gamma} \ar[d]_{\pi}\\
&\Gamma'
}
\end{equation*}
Note that if $(\Gamma,h)\rightsquigarrow (\Gamma',h')$ and $\Gamma$ is stable then also $\Gamma'$ is stable.
\end{defi}

\begin{defi}
Let $(\Gamma, h)$ be a stable marked graph of genus $g$.  Given a subset $S\subseteq E(\G)$, let $F_S$ denote the face of $\ov{\C_{(\Gamma,h)}}$ corresponding to those length functions $l\colon E(\G)\ra \R_{\ge 0}$ that are zero on all edges in $S$.  Then define $\C_{(\Gamma,h)}$ to be the ideal cone obtained from $\ov{\C_{(\Gamma,h)}}$  by removing those faces corresponding to sets $S\subseteq E(\G)$ containing a cycle.
\end{defi}

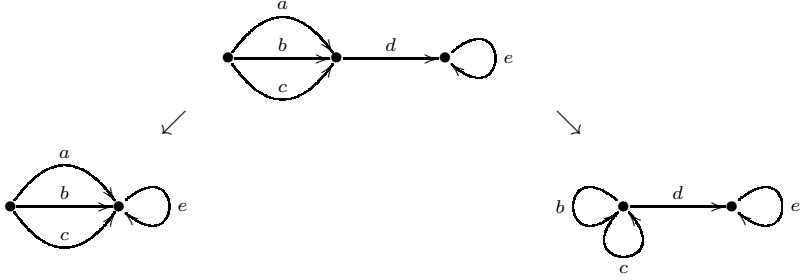
\begin{figure}%
\begin{center}
\begin{tabular}{rcl}
&\xymatrix@=3pc{
*{\bullet} \ar@{->}^{a}@/^1.3pc/[r] \ar@{->}^{c}@/_1.3pc/[r] \ar@{->}^b[r]  & *{\bullet}
\ar@{->}^d[r] & *{\bullet} \ar@{->}^{e}@(ur,dr)}
&\\
\xymatrix{\swarrow}& & \xymatrix{\searrow}
\\
\xymatrix@=3pc{
*{\bullet} \ar@{->}^{c}@/_1.3pc/[r] \ar@{->}^b[r]  & *{\bullet}
\ar@{<-}_{a}@/_1.3pc/[l] \ar@{->}^{e}@(ur,dr)
}
& &
\xymatrix@=3pc{
 *{\bullet}  \ar@{->}_{b}@(ul,dl)  \ar@{->}_{c}@(dl,dr)
\ar@{->}^d[r] & *{\bullet} \ar@{->}^{e}@(ur,dr)
}
\end{tabular}
\end{center}
\caption{Two specializations of a graph $\G$ of genus 3. Denote by $\ov e$ the reversal of the directed edge $e$.  Suppose the marking of $\G$ sends the three loops of $R_3$ to $\ov a b, \ov c b,$ and $ d e \ov d$, respectively.  Then the marking specializes, on the left, to $(\ov a b, \ov c b, e)$ and, on the right, to $(b, \ov c b, d e \ov d)$.}
\label{fig:specialize}%
\end{figure}

Thus, the nonempty faces of $C_{(\G,h)}$ correspond to (equivalence classes of) specializations of $(\G,h)$.  Indeed, a specialization $(\Gamma,h)\rightsquigarrow(\Gamma',h')$ yields an obvious inclusion $\iota\colon E(\Gamma')$ $ \ra E(\Gamma)$ that induces, in turn, a lattice-preserving linear map $L_\iota\colon \C_{(\Gamma',h')}\hookrightarrow\C_{(\Gamma,h)}$ sending $\C_{(\Gamma',h')}$ to a face of $\C_{(\Gamma,h)}$.  Using these linear maps, given a stable marked graph $(\Gamma,h)$, we have a natural identification of sets
\begin{equation}\label{celtrop-dec}
\C_{(\Gamma,h)}=\coprod_{(\Gamma,h)\rightsquigarrow(\Gamma',h')}\C^0_{(\Gamma',h')},
\end{equation}
where the union runs over all equivalence classes of stable marked graphs of genus $g$ which are obtained as specializations of $(\Gamma,h)$.
Summarizing, we have:

\begin{lemma}\label{L:specialization-ideal}
Let $(\Gamma, h)$ be a stable marked graph of genus $g$. Then the faces of ${\C_{(\Gamma,h)}}$ are in bijective correspondence with ideal cones $\C_{(\Gamma',h')}$, where  $(\Gamma',h')$ is a stable marked weighted graph of genus $g$ such that $(\Gamma, h) \rightsquigarrow (\G',h')$.  The identification is given by a lattice-preserving linear map $L_\iota\colon \C_{(\Gamma',h')}\hookrightarrow\C_{(\Gamma,h)}$ sending $\C_{(\Gamma',h')}$ to a face of~$\C_{(\Gamma,h)}$.
\end{lemma}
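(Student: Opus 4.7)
First, I would identify the nonempty faces of $\C_{(\G,h)}$ with the acyclic subsets $S\subseteq E(\G)$, i.e.~the subforests of $\G$. This is immediate from the construction, since $\C_{(\G,h)}$ is obtained from $\ov{\C_{(\G,h)}}=\R^{|E(\G)|}_{\ge 0}$ by removing exactly those closed faces $F_S$ with $S$ containing a cycle, leaving precisely the $F_S$ for acyclic $S$.

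Next, for each acyclic $S$ I would produce a specialization by contracting $S$: set $\pi_S\colon \G\to\G_S := \G/S$ and $h_S := \pi_S\circ h$. Contracting a forest is a homotopy equivalence, so $h_S$ is a marking, $g(\G_S)=g$, and $(\G,h)\rightsquigarrow(\G_S,h_S)$ in the sense of Definition~\ref{D:spec-pure}. Stability of $\G_S$ reduces to a brief valence check: any vertex $w\in\G_S$ is the image of some subtree $T\subseteq S$ with, say, $n$ vertices and hence $n-1$ edges, so
\[
\val_{\G_S}(w) \;=\; \sum_{v\in T}\val_\G(v) \;-\; 2(n-1)\;\ge\; 2n-2(n-1) \;=\; 2
\]
by stability of $\G$. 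The coordinate inclusion $E(\G_S)=E(\G)\setminus S \hookrightarrow E(\G)$ then induces a linear map $L_\iota$ which takes standard basis vectors to standard basis vectors and is therefore lattice-preserving in the sense of Definition~\ref{d:integral}; it sends $\ov{\C_{(\G_S,h_S)}}$ bijectively onto $F_S$, and restricts to an identification $\C_{(\G_S,h_S)}\hookrightarrow\C_{(\G,h)}$ of ideal cones, because a subset $S'\subseteq E(\G_S)$ contains a cycle precisely when its image in $E(\G)$ does (the forest $S$ contributes no extra cycles).

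Finally, to establish the bijection I would observe that the assignment $S\mapsto (\G_S,h_S)$ is injective, since $S$ is recovered as the set of edges collapsed by $\pi_S$, and that every specialization $(\G,h)\rightsquigarrow(\G',h')$ arises this way: letting $S$ be the set of edges of $\G$ collapsed by the given $\pi\colon\G\to\G'$, the universal property of contraction yields an induced isomorphism $\G_S\xrightarrow{\sim}\G'$, and the homotopy commutativity in Definition~\ref{D:spec-pure} guarantees that it intertwines $h_S$ with $h'$. The main obstacle is essentially bookkeeping: marked graphs and specializations are considered throughout only up to equivalence, so one must verify that the dictionary $S\leftrightarrow F_S \leftrightarrow (\G_S,h_S)$ really does descend to equivalence classes on both sides — but this follows from the observation that an isometry of $\G_S$ intertwining the markings lifts uniquely to a permutation of $E(\G)$ preserving $S$, matching the two notions of equivalence.
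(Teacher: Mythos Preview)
Your argument is correct and follows exactly the line the paper takes; in fact the paper does not give a separate proof of this lemma at all, but states it as a summary (``Summarizing, we have:'') of the preceding paragraph, which simply asserts that nonempty faces of $\C_{(\G,h)}$ correspond to equivalence classes of specializations and records the inclusion $L_\iota$. So you have supplied considerably more detail than the paper --- the explicit identification of faces with acyclic $S$, the valence check for stability of $\G_S$, and the verification that $L_\iota$ lands in the ideal cone --- all of which the paper leaves implicit.

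One small point deserves more care than you give it. Your injectivity argument (``$S$ is recovered as the set of edges collapsed by $\pi_S$'') only shows that the assignment $S\mapsto(\G_S,h_S,\pi_S)$ is injective; you still need that distinct acyclic $S\neq S'$ give \emph{inequivalent} marked graphs $(\G_S,h_S)\not\sim(\G_{S'},h_{S'})$. Your proposed fix --- that an isomorphism of $\G_S$ intertwining the markings ``lifts uniquely to a permutation of $E(\G)$ preserving $S$'' --- does not work as stated: such an isomorphism only gives a bijection $E(\G)\setminus S\to E(\G)\setminus S'$, with no canonical action on $S$ itself. A clean way to close this gap is to observe that the length--function map $\C_{(\G,h)}\to\R^{\mathcal C}$ of Fact~\ref{F:x-g} is the restriction of a \emph{linear} map (contracting a forest never introduces backtracking in a cyclically reduced edge path), hence injective on all of $\C_{(\G,h)}$; combined with Culler--Morgan injectivity this forces $S=S'$. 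The paper does not address this point either, so this is a refinement rather than a correction.
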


\subsubsection{The topology underlying pure tropical Teichm\"uller space.}

We are now ready to define the pure tropical Teichm\"uller space $\Tgp$.

\begin{defi}\label{Tgp}
The pure tropical Teichm\"uller space of genus $g$ is the topological space (with respect to the quotient
topology)
$$\Tgp:=\left(\coprod {\C_{(\Gamma,h)}}\right)/ \approx$$
where the disjoint union (endowed with the disjoint union topology)
runs through all equivalence classes of stable marked graphs $(\Gamma,h)$ of genus $g$
and $\approx$ is the equivalence relation generated by the lattice-preserving linear maps~$L_\iota$.
\end{defi}

\begin{prop}\label{p:tgp-is-isf}
The topological space $\Tgp$ is an ideal stacky fan with cells $C^0_{(\Gamma,h)}$.  It parametrizes marked pure tropical curves of genus~$g$.
\end{prop}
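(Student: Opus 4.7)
The plan is to verify the axioms of Definition~\ref{d:sf} directly. For each equivalence class of stable marked graphs $(\Gamma,h)$ of genus $g$, fix an ordering of $E(\Gamma)$ and set $V_{(\Gamma,h)} = \R^{|E(\Gamma)|}$ with lattice $\Z^{|E(\Gamma)|}$; take the ideal cone $C_{(\Gamma,h)} \subset V_{(\Gamma,h)}$ as already defined, and take the collection of face inclusions to be the lattice-preserving maps $L_\iota$ coming from specializations, as provided by Lemma~\ref{L:specialization-ideal}. All requirements of Definition~\ref{d:sf} are immediate except for the cell condition, which asks that the natural map
\[
\Psi\colon \coprod_{[(\Gamma,h)]} C^0_{(\Gamma,h)} \longrightarrow \Tgp
\]
be a bijection.

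Surjectivity of $\Psi$ follows at once from the stratification (\ref{celtrop-dec}), applied in each ambient cone $C_{(\Gamma,h)}$. The substantive point is injectivity, for which the key observation is the following. An inclusion $L_\iota\colon C_{(\Gamma',h')} \hookrightarrow C_{(\Gamma,h)}$ arising from a specialization $(\Gamma,h) \rightsquigarrow (\Gamma',h')$ maps $C^0_{(\Gamma',h')}$ onto precisely the unique relatively open face of $C_{(\Gamma,h)}$ labeled by $(\Gamma',h')$ in the decomposition (\ref{celtrop-dec}); in particular, $L_\iota\bigl(C^0_{(\Gamma',h')}\bigr)$ meets $C^0_{(\Gamma,h)}$ only when $L_\iota$ is a bijection, i.e.~no edges are contracted, in which case Definition~\ref{D:spec-pure} forces $(\Gamma,h) \sim (\Gamma',h')$.

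This observation means that every face inclusion $L_\iota$ preserves the ``cell label'' from (\ref{celtrop-dec}) attached to each point of its domain. Since $\approx$ is generated by the $L_\iota$, a zigzag of identifications taking $x \in C^0_{(\Gamma,h)}$ to $y \in C^0_{(\Gamma',h')}$ propagates the label along each step and concludes that $(\Gamma,h) \sim (\Gamma',h')$ as marked graphs; tracking the coordinatewise effect of the zigzag then forces $x = y$ under the resulting isomorphism $C^0_{(\Gamma,h)} \cong C^0_{(\Gamma',h')}$. This establishes injectivity of $\Psi$, and with it the ideal stacky fan structure on $\Tgp$.

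The parametrization statement is then immediate from Remark~\ref{r:no-stackiness}: points of $C^0_{(\Gamma,h)}$ are in bijection with marked pure tropical curves of combinatorial type $(\Gamma,h)$, and every marked pure tropical curve of genus $g$ has a unique such combinatorial type, so the disjoint union over all $(\Gamma,h)$ recovers the set of all marked pure tropical curves of genus $g$. The principal obstacle in this plan is the cell-disjointness step above: since $\approx$ is generated by infinitely many face inclusions, one must verify that the cell label is intrinsic to a point and cannot be altered by $L_\iota$ or $L_\iota^{-1}$. This invariance ultimately rests on the uniqueness of $L_\iota$ up to graph automorphisms, as in Remark~\ref{r:finitely-many-maps}, together with the compatibility diagram in Definition~\ref{D:spec-pure} that makes specializations unambiguous on markings.
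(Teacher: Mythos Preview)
Your proof is correct and takes the same approach as the paper's: both argue that the natural map $\coprod C^0_{(\Gamma,h)} \to \Tgp$ is a bijection, deriving surjectivity from (\ref{celtrop-dec}) and injectivity from the fact that each $L_\iota$ respects the stratification by combinatorial type, then invoke Remark~\ref{r:no-stackiness} for the parametrization statement. The paper's version is terser, simply asserting that the $L_\iota$ ``never identify two different points in the relative interiors of cones''; your zigzag argument unpacks this, though the closing appeal to Remark~\ref{r:finitely-many-maps} is not really needed---the invariance of the cell label follows directly from the description of $L_\iota$ in Lemma~\ref{L:specialization-ideal}.
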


\begin{proof}
To prove the first sentence, we only need to prove that the map
\begin{equation}\label{tgp-bij}
\coprod\C^0_{(\Gamma,h)}\to \left(\coprod {\C_{(\Gamma,h)}}\right){/ \approx}
\end{equation}
is bijective, where the disjoint unions
run through all stable marked graphs $(\Gamma,h)$ of genus $g$; then the rest follows from the definition of $\Tgp$.  It is injective because the linear maps in Lemma~\ref{L:specialization-ideal} never identify two different points in the relative interiors of cones.  It is surjective by (\ref{celtrop-dec}).  Finally, from the bijection (\ref{tgp-bij}) and Remark~\ref{r:no-stackiness}, it follows that $\Tgp$ parametrizes marked pure tropical curves of genus~$g$.
\end{proof}

\subsection{Homeomorphism with Outer space}\label{S:Outerspace}

 As mentioned above, for a fixed genus $g$, the pure tropical Teichm\"uller space $\Tgp$ and Culler-Vogtmann's Outer space $X_g$ parametrize the same objects, provided that we do not normalize the volume of the metric graphs as in \cite{cullervogtmann}.
We now recall the definition of~$X_g$.

Let $(\G,l)$ be a metric graph; we will often write $\G$ for short, and regard $\G$ as a metric space just as in Definition~\ref{metric-graph}.  A {\em loop} in $\G$ is the image of a map $S^1 \ra\G$.  It is {\em immersed} if it is a local homeomorphism onto its image.  The {\em length} of an immersed loop $\al$ is the sum of the lengths of the edges it traverses, counted with multiplicity.  The length of a non-immersed loop $\al$ is defined to be the length of the unique up to homotopy immersed loop homotopic to $\al$.

Recall that $F_g$ denotes the free group on $g$ letters, the graph $R_g$ denotes the rose with $g$ petals, and that we fix an identification between $F_g$ and $\pi_1(R_g,v)$, where $v$ is the unique vertex of $R_g$.  So a word $w\in F_g$ determines, up to homotopy, a loop $\lambda(w)$ in $R_g$; furthermore, two words determine the same loop, again up to homotopy, if and only if they are in the same conjugacy class.

 Let $\C$ denote the set of conjugacy classes of words in $F_g$. Then any marked metric graph $(\Gamma, l, h)$ of genus $g$  determines a real valued function  $h_{\C}(\Gamma, l, h)$ on $\C$ which assigns to each word $w$ the length of the unique immersed loop in $(\Gamma,l)$ which is homotopic to $h(\lambda(w))$. As noted above, the definition of $h_\C(\Gamma, l, h)$ does not depend on the equivalence class of $w$.

\begin{factdefi} \cite{cullervogtmann}\label{F:x-g}
Outer space $X_g$ is defined to be the set of equivalence classes of stable marked metric graphs $(\G,l,h)$.  The map $h_\C\colon X_g\ra \R^\C$ defined above is an injection by \cite{cullermorgan}, and we equip $X_g\subset \R^\C$ with the subspace topology. $X_g$ has a natural decomposition into a disjoint union of open simplicial cones consisting of
equivalence classes of marked metric graphs  having the same combinatorial type.
\end{factdefi}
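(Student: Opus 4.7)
The statement is part definition and part citation, so the only content that genuinely requires argument is the claim that $X_g$ decomposes into open simplicial cones indexed by combinatorial types. I would take the injectivity of $h_\C$ from \cite{cullermorgan} as a black box.

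Fix a stable marked graph $(\Gamma,h)$ of genus $g$ and let $X_g^{(\Gamma,h)}\subseteq X_g$ denote the subset of equivalence classes whose underlying combinatorial type is $(\Gamma,h)$. By Remark~\ref{r:no-stackiness}, the assignment $l\mapsto [(\Gamma,l,h)]$ defines a bijection from $\R_{>0}^{E(\Gamma)}$ onto $X_g^{(\Gamma,h)}$. The main observation is that, under the embedding $h_\C$, this parametrization is the restriction of a linear map $\R^{E(\Gamma)}\to\R^\C$. Indeed, for each conjugacy class $w\in\C$, the unique immersed loop in $\Gamma$ homotopic to $h(\lambda(w))$ is a purely combinatorial object depending only on $(\Gamma,h)$; it traverses each edge $e$ a non-negative integer number of times $n_e(w)$, so the length of this loop equals $\sum_{e\in E(\Gamma)} n_e(w)\,l(e)$, which is manifestly linear in $l$. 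Combined with Culler--Morgan injectivity, the resulting linear map $\R^{E(\Gamma)}\to\R^\C$ has trivial kernel; hence its restriction to the positive orthant $\R_{>0}^{E(\Gamma)}$ is a homeomorphism onto an open simplicial cone inside $\R^\C$, with continuous inverse obtained by reading off the coordinates corresponding to any set of linearly independent conjugacy classes.

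For disjointness across different combinatorial types, if $h_\C(\Gamma,l,h)=h_\C(\Gamma',l',h')$ then the Culler--Morgan injectivity forces $(\Gamma,l,h)$ and $(\Gamma',l',h')$ to be equivalent, and in particular the underlying combinatorial types $(\Gamma,h)$ and $(\Gamma',h')$ coincide. Finally, every marked metric graph trivially has a combinatorial type, so these cones exhaust $X_g$ and we obtain the desired decomposition into a disjoint union of open simplicial cones. The only nontrivial ingredient is the Culler--Morgan injectivity; the rest is a direct unwinding of definitions, with the linearity of the length-of-immersed-loop functional as the key mechanism.
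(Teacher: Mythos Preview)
The paper does not supply its own proof of this statement: it is a \emph{Fact--Definition} attributed to \cite{cullervogtmann} and \cite{cullermorgan}, and the text moves on immediately without any argument. So there is nothing to compare your proposal against; you have written out a justification where the paper simply cites the literature.

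That said, your argument is correct and is exactly the sort of reasoning one would give to unpack the cited fact. The key points---that for a fixed combinatorial type $(\Gamma,h)$ the immersed representative of $h(\lambda(w))$ is determined combinatorially, so that the $w$-coordinate of $h_\C$ is the linear functional $l\mapsto \sum_e n_e(w)\,l(e)$; that Culler--Morgan injectivity forces this linear map $\R^{E(\Gamma)}\to\R^\C$ to be injective; and that an injective linear map from a finite-dimensional space into $\R^\C$ (with the product topology) is a homeomorphism onto its image, recoverable by projecting to finitely many coordinates---are all sound. Disjointness and exhaustion are immediate, as you note. If anything, you have proved slightly more than the paper asserts: the paper only claims a set-theoretic decomposition into pieces each parametrized by an open orthant (cf.\ Remark~\ref{r:no-stackiness}), whereas you have verified that, under the embedding $h_\C$, each piece really sits in $\R^\C$ as the relative interior of a simplicial cone with the expected subspace topology.
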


Notice that since we do not normalize the volume of the graphs as Culler and Vogtmann do, we get an embedding  of  $X_g$ in $\mathbb R^\C$ rather than in $\R\P^{\C}$ as in \cite{cullermorgan}, and we get that $X_g$ is a union of simplicial cones as opposed to simplices.

The following fact is ``folklore'' in geometric group theory.  A proof appears in \cite{guirardel-levitt}; in fact the result there pertains to a wider class of deformation spaces.

\begin{prop}\cite[Proposition~5.4]{guirardel-levitt} The subspace topology on Outer space coincides with the simplicial topology (obtained from gluing together the above simplicial cones along shared boundaries.)
\end{prop}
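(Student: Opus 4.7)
The plan is to construct the natural map $\Phi\colon \Tgp \to X_g\subseteq \R^{\C}$ sending a (class of a) marked metric graph $(\Gamma,l,h)$ to the function $h_{\C}(\Gamma,l,h)$ of Fact--Definition~\ref{F:x-g}, and to show that $\Phi$ is a homeomorphism. That $\Phi$ is a bijection on underlying sets is immediate from Proposition~\ref{p:tgp-is-isf} and the Culler--Morgan injectivity statement recalled inside Fact--Definition~\ref{F:x-g}.

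For continuity of $\Phi$, I would argue cell-by-cell. On each ideal cone $\C_{(\Gamma,h)}$, the $w$-coordinate of $\Phi$ is the length of the immersed loop homotopic to $h(\lambda(w))$ in $(\Gamma,l)$, which is a $\Z_{\geq 0}$-linear combination of the edge lengths of $\Gamma$ with coefficients determined combinatorially by $(\Gamma,h)$ and $w$. Assembling these coordinates gives a continuous linear map $\C_{(\Gamma,h)} \to \R^{\C}$ with respect to the product topology on $\R^{\C}$. These linear maps agree on overlaps because the gluings in Definition~\ref{Tgp} identify points corresponding to equivalent marked metric graphs, which have the same loop-length functions. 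The universal property of the quotient topology then gives continuity of $\Phi$ on all of $\Tgp$.

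The first step toward continuity of $\Phi^{-1}$ is a local finiteness statement for $\Tgp$: for any stable marked graph $(\Gamma,h)$ of genus $g$, only finitely many equivalence classes of stable marked graphs $(\Gamma',h')$ admit a specialization $(\Gamma',h')\rightsquigarrow (\Gamma,h)$. The reason is that such a specialization is encoded by a surjective morphism $\pi\colon \Gamma'\to \Gamma$ contracting an acyclic subgraph; up to isomorphism over $\Gamma$ this amounts to blowing up each vertex $v\in V(\Gamma)$ into a tree whose leaves are labeled by the edges of $\Gamma$ at $v$, of which there are finitely many subject to the stability of $\Gamma'$. Moreover, for fixed $\pi$ the lift $h'$ of $h$ is determined up to equivalence, since lifts through the contracted acyclic part are unique up to homotopy. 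Consequently each $x\in \Tgp$ lies in the closure of only finitely many cells, so a fundamental system of neighborhoods of $x$ in the simplicial topology is obtained by choosing product $\epsilon$-neighborhoods in each of these finitely many cells and gluing them compatibly.

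The main obstacle is then to show that a sufficiently small $\R^{\C}$-neighborhood of $\Phi(x)$ meets $X_g$ only in the images of these finitely many cells, and that inside each such cell the translation from subspace neighborhoods to $\epsilon$-neighborhoods in the edge-length coordinates is bi-continuous. The latter is immediate from the linear homeomorphism of each $\C_{(\Gamma',h')}$ onto its image in $\R^{\C}$, granted again by the Culler--Morgan injectivity. The former is a \emph{rigidity of combinatorial type} assertion: if $(\Gamma'',h'')$ does not specialize to $(\Gamma,h)$, then any marked metric graph of type $(\Gamma'',h'')$ is separated from $\Phi(x)$ by a uniform positive amount on some conjugacy class. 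I would establish this by exhibiting, for each such ``bad'' combinatorial type, a conjugacy class $w\in \C$ whose immersed representative in any metric on $(\Gamma'',h'')$ is forced to traverse an edge or cycle whose length cannot be made small without triggering a further specialization, while the corresponding quantity $h_{\C}(x)(w)$ is controlled by the edge lengths of $(\Gamma,l,h)$. Quantifying this separation on a uniformly finite collection of conjugacy classes then completes the argument, matching the strategy of Guirardel--Levitt~\cite[Proposition~5.4]{guirardel-levitt}.
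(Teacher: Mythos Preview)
The paper does not give its own proof of this statement: it is quoted as a known result, with the proof deferred entirely to \cite[Proposition~5.4]{guirardel-levitt}. So there is no in-paper argument to compare against; the relevant question is whether your outline stands on its own.

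Your set-up is fine: the bijectivity and the continuity of $\Phi$ are routine, and your local-finiteness observation (each $(\Gamma,h)$ is specialized to by only finitely many stable marked graphs) is correct and is exactly what one needs on the simplicial side. The gap is in the last paragraph. What you call the ``rigidity of combinatorial type'' assertion is the entire content of the proposition, and you have only described a wish, not an argument. The difficulty is not to separate $\Phi(x)$ from a \emph{single} bad cell $\C^0_{(\Gamma'',h'')}$ by some conjugacy class; it is to find a \emph{finite} set of conjugacy classes and a \emph{single} $\epsilon>0$ that simultaneously separate $\Phi(x)$ from \emph{all} of the infinitely many cells not containing $x$ in their closure. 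Since $\Out(F_g)$ acts with infinitely many orbits of markings on each underlying graph type, your ``for each bad type exhibit a $w$'' strategy does not obviously yield a neighborhood in the product topology on $\R^{\C}$; one needs uniform control, e.g.\ a bound of the form: if all length coordinates on a fixed finite set $S\subset\C$ are within $\epsilon$ of those of $x$, then the combinatorial type must be one of the finitely many generalizations of $(\Gamma,h)$. This is precisely what the Guirardel--Levitt argument supplies (via a careful analysis of how small edge-lengths force collapses compatible with the marking), and it is not something one can wave through. As written, your proposal identifies the right skeleton but leaves the load-bearing step unproved.
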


As an immediate corollary, we have:
\begin{cor}\label{T:tgp-is-xg}
The pure tropical Teichm\"uller space $\Tgp$, with topology described in Definition~\ref{Tgp}, is homeomorphic to Outer space $X_g$, with the subspace topology.% induced by its inclusion in the infinite dimensional vector space $\mathbb R^\C$.
\end{cor}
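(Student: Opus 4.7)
The plan is to recognize that this corollary follows essentially immediately from what has just been set up, with the real content lying in the cited Guirardel--Levitt result.

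First, I would establish the set-theoretic bijection $\Tgp \leftrightarrow X_g$. By Proposition~\ref{p:tgp-is-isf}, $\Tgp$ parametrizes equivalence classes of marked pure tropical curves of genus $g$, and by Fact-Definition~\ref{F:x-g}, so does $X_g$. The bijection sends a cell $C^0_{(\Gamma,h)}$ of $\Tgp$ to the corresponding open simplicial cone in the decomposition of $X_g$ described in Fact-Definition~\ref{F:x-g}, via the identity on length functions $l \colon E(\Gamma)\to\R_{>0}$. This bijection is compatible with specializations: when $(\Gamma,h)\rightsquigarrow(\Gamma',h')$, letting an edge length go to zero in the cell $C_{(\Gamma,h)}$ (along the inclusion of faces $L_\iota$) corresponds on the $X_g$ side to the same degeneration of the marked metric graph, which in turn gives a continuous transition of the functions $h_\C(\Gamma,l,h)\in\R^\C$ at the boundary.

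Next, I would note that by the very Definition~\ref{Tgp}, $\Tgp$ carries the topology obtained by gluing the cones $C_{(\Gamma,h)}$ along their ideal faces via the maps $L_\iota$; this is exactly the simplicial topology on the union of open simplicial cones parametrizing marked metric graphs of each combinatorial type. On the other hand, $X_g$ is defined with the subspace topology inherited from $\R^\C$ via $h_\C$, and it is also a union of the same open simplicial cones.

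The main step is then to invoke the cited Proposition~5.4 of Guirardel--Levitt \cite{guirardel-levitt}, which asserts precisely that the subspace topology on $X_g$ coincides with its simplicial topology. Combining this with the bijection and the identification of $\Tgp$'s topology with the simplicial one yields a homeomorphism $\Tgp \cong X_g$.

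The only potential obstacle is verifying carefully that the bijection sends the simplicial cone structure on $\Tgp$ to the simplicial cone structure on $X_g$ as described in Fact-Definition~\ref{F:x-g}; but this amounts to checking that a length function $l$ on $E(\Gamma)$ and the function $w\mapsto(\text{length of the immersed loop homotopic to } h(\lambda(w)))$ determine each other in a manner compatible with specializations, which is straightforward since immersed loop lengths are piecewise-linear (in fact just sums) in edge lengths. Once this is in place the corollary is immediate.
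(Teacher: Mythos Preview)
Your proposal is correct and takes essentially the same approach as the paper: the paper treats this corollary as immediate from the Guirardel--Levitt result, and your proof simply unpacks why, matching the set-theoretic bijection of Proposition~\ref{p:tgp-is-isf} and Fact-Definition~\ref{F:x-g} with the identification of $\Tgp$'s gluing topology as the simplicial topology on $X_g$. Your added detail about compatibility with specializations and the linearity of loop lengths in edge lengths is more than the paper provides, but is in the same spirit.
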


\begin{remark}\label{Xgidealfan}
Note that $X_g$ itself can be regarded as the support of an ideal fan $\Sigma = \{\sigma_i\}$ whose cones are in bijection with stable marked graphs of genus $g$ \cite{cullervogtmann}.  As described in Remark~\ref{r:infinite}, we may therefore associate an ideal stacky fan $X_{g,\Sigma}$ to $X_g$: roughly speaking, we pull the space apart into its ideal cones, impose a lattice on each cone separately, and then reglue.  Corollary~\ref{T:tgp-is-xg} then implies that $X_{g,\Sigma}$ and $\Tgp$ are isomorphic ideal stacky fans.
\end{remark}

\subsection{Tropical Teichm\"uller space}\label{ss:tg}

Having constructed $\Tgp$, we will now construct a space, which we call tropical Teichm\"uller space and denote by $\Tg$, by replacing the ideal cones in $\Tgp$ with closed cones.  Roughly speaking, we allow edge lengths to go to zero; we then have to do a little work to define a marking on such a ``pseudo-metric'' graph and when two markings should be considered the~same.

\begin{defi}
\label{mark-graph}
A \emph{weighted graph} is a pair $(\Gamma,w)$ consisting of a
graph $\Gamma$ and a function $w:V(\Gamma)\to \Z_{\geq 0}$, called
the weight function.
A weighted graph is called \emph{stable} if any vertex $v$ of weight zero
(i.e. such that $w(v)=0$) has valence $\val(v)\geq3$.
The total weight of $(\Gamma,w)$ is
$$|w|:=\sum_{v \in V(\Gamma)} w(v),$$
and the genus of $(\Gamma,w)$ is defined to be
$$g(\Gamma,w):=g(\Gamma)+|w|.$$
We will denote by $\un{0}$ the identically zero weight function.
\end{defi}

\begin{defi}\label{trop-curves}
  A {\em tropical curve} of genus $g$ is a triple $(\Gamma, w, l)$, where $(\Gamma,w)$ is a stable weighted graph of genus $g$ and $l:E(\Gamma)\to \R_{>0}$ is a length function on the edges of $\Gamma$.
\end{defi}

Note that pure tropical curves in the sense of Definition \ref{metric-graph} are exactly the tropical curves with total weight zero.

%\textbf{I'm not sure: should we say just a metric graph $\Gamma$ and say topological graph if we don't consider the metric or should we right $(\Gamma,l)$ for a metric graph?}

In order to endow weighted graphs with a marking, we will use the strategy of A. Omini and L. Caporaso in \cite{RR} that treats a weight of $w>0$ at a vertex as a bouquet of $w$ loops attached to that vertex.

\begin{defi}\label{D:virt-graph}
Let $(\Gamma,w)$ be a weighted graph. Then the {\em virtual graph} of $(\Gamma,w)$ is the graph $\Gamma^w$ obtained from $\Gamma$ by attaching to every vertex $v$ exactly $w(v)$ loops% $\Delta_1^v,\dots,\Delta_{w(v)}^1$
, which will be called the {\em virtual loops} of $\Gamma^w$.
\end{defi}

\begin{defi}
A {\em pseudo-metric graph} is a pair $(\Gamma,l)$ where $\Gamma$ is a graph and $l:E(\Gamma)\to \R_{\geq 0}$ is a length function on the edges which is allowed to vanish only on loop edges of $\Gamma$. A pseudo-metric graph $(\Gamma, l)$ is said to be stable if the underlying graph $\Gamma$ is stable.

Given a tropical curve $(\Gamma,w,l)$, we associate to it the pseudo-metric graph $(\Gamma^w,l^w)$, where $l^w$ is the length function obtained by extending $l$ to be equal to $0$ on the virtual loops of $\Gamma^w$.
\end{defi}

\noindent Notice that the virtual graph $\Gamma^w$ is an unweighted graph of the same genus as $(\Gamma,w)$, so the correspondence above gives a bijection between tropical curves and stable pseudo-metric graphs of fixed genus.

\begin{defi}\label{d:marked-weighted}
Let $(\Gamma,w)$ be a weighted graph of genus $g$.
Then a {\em marking} on $(\Gamma,w)$ is a marking on $\Gamma^w$, that is,~a homotopy equivalence $h\colon R_g\ra\Gamma^w$.

%A {\em marked tropical curve} is a quadruple $(\Gamma,w,l,h)$ where $(\G,w,l)$ is a tropical curve and $h$ is a marking on $(\G,w)$.
\end{defi}

We will shortly define two markings of weighted graphs to be equivalent if, roughly speaking, they are the same up to homotopy and up to interchanging sequences of virtual loops based at the same vertex.  To make this definition precise requires some notation, as follows.

Recall that $R_g$ is the graph consisting of $g$ loops $\gamma_1,\ldots, \gamma_g$ at a vertex $v$, and let $h\colon R_g\ra \G$ be any  map.  By moving the image of $v$ appropriately and then ``pulling tight'' along each $\gamma_i$, we see that $h$ is homotopic to some $\tilde{h}\colon R_g\ra \G$ that sends $v$ to a point of  $\G$ which is not a vertex (any point of $\G$ of zero weight will do) and furthermore immerses the interior of each $\gamma_i$.  %If we fix an orientation of $\gamma_i$ for reference, then we can associate to $\gamma_i$ the sequence $e^{\gamma_i}_h=(e_1^i,\ldots,e_{k_i}^{i})$ of directed edges traversed by $\tilde{h}(\gamma_i)$.  This sequence clearly depends only on $h$ and not on $\tilde{h}$, and conversely, the sequences $e^{\gamma_1}_h,\ldots, e^{\gamma_g}_h$ determine $h$ up to homotopy.

\begin{defi}\label{d:marked-weighted-equ}
Two markings $h,h'\colon R_g\ra\Gamma^w$ of $(\Gamma,w)$ are equivalent if, after homotoping $h$ and $h'$, we have
\begin{enumerate}[(i)]
	\item $h(v)=h'(v)$ is a point of  $\G$ which is not a vertex,
	\item $h$ and $h'$ are immersions on the interiors of each $\gamma_i$,  and
	\item for each $i=1,\ldots,g$, the directed loop $h'(\gamma_i)$ is obtained, up to homotopy fixing the basepoint, from the directed loop $h(\gamma_i)$ by repeatedly replacing sequences of virtual loops at a vertex with other sequences of virtual loops at that vertex.  (These sequences of virtual loops are allowed to be empty: in other words, one may add or remove virtual loops at a vertex.)
\end{enumerate}
Two marked weighted graphs $(\G,w,h)$ and $(\G',w',h')$ are equivalent if there is an isomorphism $\G^w\ra(\G')^{w'}$ of 1-complexes that takes virtual loops to virtual loops and sends $h$ into the equivalence class of $h'$.  Two marked metric weighted graphs $(\G,w,l,h)$ and $(\G',w',l',h')$ are {\em equivalent} if the underlying marked weighted graphs $(\G,w,h)$ and $(\G',w',h')$ are equivalent via an isomorphism that respects the length function $l$ on the edges of $\G$.
\end{defi}
\noindent As in the previous section, we are interested in marked weighted graphs (resp.~marked metric weighted graphs) up to equivalence, and will often drop the phrase  ``equivalence class of'' for simplicity.

\begin{remark}
The reason for conditions (i) and (ii) above is to prohibit adding a virtual loop at a vertex $v$ to a path that passes through $v$ and then immediately backtracks; roughly speaking, this is because such a path can be homotoped to one that does not backtrack through $v$, but a path that uses a virtual loop at $v$ cannot be homotoped in that way.
\end{remark}

We can now define marked tropical curves.

\begin{defi}\label{D:trop-curves}
A {\em marked tropical curve} $(C,h)$ of genus $g$ is a tropical curve $C=(\G,w,l)$ of genus $g$ together with a marking $h:R_g\to \Gamma^w$ of the underlying weighted graph $(\G,w)$, up to equivalence.

Given a marked tropical curve $(C,h)=(\G,w,l,h)$, we call the  marked weighted graph $(\G,w,h)$
the combinatorial type of the tropical curve.
\end{defi}

\begin{defi}\label{D:simplicial-cones-2}
Given a stable marked weighted graph $(\Gamma, w, h)$ of genus $g$, fix a numbering on its set of edges $E(\Gamma)$.  Write
$$\C_{(\Gamma,w,h)}^0 := \R^{|E(\Gamma)|}_{>0} \qquad\text{and}\qquad \ov \C_{(\Gamma,w,h)}:=\mathbb R^{|E(\Gamma)|}_{\ge0}$$
for the open and closed simplicial cones, respectively, in $\mathbb R^{|E(\Gamma)|}$.
\end{defi}

\begin{lemma}\label{l:no-stackiness-2}
Let $(\G,w,h)$ be (an equivalence class of) a stable marked  weighted graph.  Then the points of $\C_{(\Gamma,w,h)}^0$ are in bijection with the set of (equivalence classes of) marked metric weighted graphs $(\Gamma,w,l,h)$ for some length function $l$, or in other words with marked tropical curves of combinatorial type $(\G,w,h)$.
\end{lemma}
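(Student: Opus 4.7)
The plan is to construct a bijection between $\mathcal{C}^0_{(\Gamma,w,h)} = \R_{>0}^{|E(\Gamma)|}$ and the set of equivalence classes of marked tropical curves of combinatorial type $(\Gamma, w, h)$. Define $F \colon l \mapsto [(\Gamma, w, l, h)]$; this lands in the right set by Definition~\ref{D:trop-curves}. Surjectivity is routine: any marked metric weighted graph $(\Gamma', w', l', h')$ of combinatorial type $(\Gamma,w,h)$ comes with an isomorphism $\gamma\colon \Gamma^w \to (\Gamma')^{w'}$ that pulls $l'$ back to a length function $l$ on $E(\Gamma)$ with $F(l) = [(\Gamma',w',l',h')]$.

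The real content is injectivity. Suppose $F(l)=F(l')$, so that $(\Gamma, w, l, h)$ and $(\Gamma, w, l', h)$ are equivalent via some isomorphism $\phi\colon \Gamma^w \to \Gamma^w$ sending virtual loops to virtual loops, with $l = l' \circ \phi|_{E(\Gamma)}$ and $\phi\circ h$ equivalent to $h$ as markings (Definition~\ref{d:marked-weighted-equ}). I plan to show that $\phi|_{E(\Gamma)}$ is the identity permutation of $E(\Gamma)$, from which $l=l'$ follows immediately.

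The geometric key is a covering property: every edge of $\Gamma$ is traversed by $h(\gamma_i)$ for some $i$. I deduce this from stability. First, every vertex of $\Gamma^w$ has $\val\ge 2$, since weight-zero vertices of $(\Gamma, w)$ already satisfy $\val\ge 3$, while positive-weight vertices pick up $2w(v)\ge 2$ from their virtual loops. It follows that for any edge $e$ of $\Gamma^w$, no component of $\Gamma^w\setminus e$ can be a tree: only an endpoint of $e$ can be a leaf of such a component (since all other vertices retain valence $\ge 2$), whereas a non-trivial tree needs at least two leaves. Consequently, if $h$ missed some edge $e$, the connected image $h(R_g)$ would lie in a single component $A$ of $\Gamma^w\setminus e$, and hence $\operatorname{im}(h_*)$ would be contained in $\pi_1(A)\subsetneq \pi_1(A)*\pi_1(B) = \pi_1(\Gamma^w)$, contradicting that $h$ is a homotopy equivalence.

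Granting this, the conclusion is quick. Condition (iii) of Definition~\ref{d:marked-weighted-equ}, applied to $\phi\circ h \sim h$, says that the non-virtual directed-edge skeleton of $\phi(h(\gamma_i))$ equals that of $h(\gamma_i)$ for each $i$. Since this skeleton is the $\phi|_\Gamma$-image of the skeleton of $h(\gamma_i)$, the automorphism $\phi|_\Gamma$ fixes every directed edge appearing in any such skeleton. By the covering property every edge of $\Gamma$ appears (in some direction) in some skeleton, so $\phi$ fixes every edge of $\Gamma$ as an unoriented edge. Hence $\phi|_{E(\Gamma)}=\operatorname{id}$ and $l=l'$. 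The main obstacle is the covering step, which relies essentially on the stability hypothesis.
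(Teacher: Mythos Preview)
There is a genuine gap in the key step.  You claim that condition~(iii) of Definition~\ref{d:marked-weighted-equ} gives
\[
\text{(non-virtual skeleton of }\phi(h(\gamma_i))\text{)} \;=\; \text{(non-virtual skeleton of }h(\gamma_i)\text{)},
\]
and then use ``skeleton of $\phi(h(\gamma_i)) = \phi|_\Gamma(\text{skeleton of }h(\gamma_i))$'' to conclude that $\phi|_\Gamma$ fixes each directed edge.  But condition~(iii) compares $\tilde h$ and $\tilde h'$ \emph{after} homotoping both to a common basepoint $p$, whereas $\phi\circ\tilde h$ is based at $\phi(p)$.  Sliding $\phi\tilde h$ back to $p$ conjugates by a path in $\Gamma$ from $p$ to $\phi(p)$, so what you actually obtain is that the skeleton of $\tilde h'(\gamma_i)$ equals $\phi|_\Gamma(\text{skeleton of }\tilde h(\gamma_i))$ only up to a fixed conjugation.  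Since the $s_i$ generate $\pi_1(\Gamma,p)$, this says precisely that $\phi|_\Gamma$ acts by an inner automorphism, i.e.\ is homotopic to the identity --- nothing more.

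That is not enough in the weighted setting.  Take $\Gamma$ the $2$-cycle on vertices $u,v$ with edges $a,b$, weights $w(u)=w(v)=1$ (genus $3$), and let $\phi$ swap $u\leftrightarrow v$, $a\leftrightarrow \bar b$.  Then $\phi|_\Gamma$ acts trivially on $\pi_1(\Gamma)\cong\Z$ (so is homotopic to the identity) and your covering property holds, yet $\phi$ swaps the two edges of $\Gamma$.  Your argument, as written, cannot rule this $\phi$ out.  The paper's proof supplies exactly the missing ingredient: it argues separately that $\phi$ must fix every vertex of positive weight (otherwise the virtual loops in the marking land at the wrong vertices, violating condition~(iii)), and then combines this with the $\pi_1(\Gamma)$ constraint to force $\phi|_\Gamma=\mathrm{id}$.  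Your skeleton argument uses only the non-virtual edges and so never sees this constraint.

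A smaller point: your covering step writes $\pi_1(\Gamma^w)=\pi_1(A)*\pi_1(B)$, which presumes $e$ is a bridge.  When $e$ is not a bridge one has instead $\pi_1(\Gamma^w)\cong\pi_1(\Gamma^w\setminus e)*\Z$, and the conclusion follows by a rank count; your ``no tree component'' argument is only needed in the bridge case.
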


\begin{proof}
The argument is just a slight strengthening of Remark~\ref{r:no-stackiness}.  A point in $\mathbb R^{|E(\Gamma)|}_{>0}$ determines a length function $l\colon E(\G)\ra\R_{>0}$.  Suppose $\psi$ is a nontrivial isometry of $(\G,l,w)$.  That is, $\psi$ is an isomorphism of $\G^w$ restricting to an isomorphism of $\G$ that respects $l$ and $w$, such that the induced permutation $p\colon E(\G)\ra E(\G)$ is nontrivial.  We want to show that $\psi h$ is not equivalent to $h$.

Just as in Remark~\ref{r:no-stackiness}, if $\psi$ moves some loop in $\G$ to a nonhomotopic loop, we are done.  So we may assume
$\psi$ fixes every loop in $\G$.  Furthermore, $\psi$ must fix every vertex of positive weight, otherwise $\psi h$ would differ from $h$ on the virtual loops.   Then $\psi$ restricts to the identity on $\G$, contradiction.
 \end{proof}

Generalizing Definition \ref{D:spec-pure}, we now introduce specializations of weighted graphs and marked weighted graphs.

\begin{defi}\label{d:specialization}
Let $(\Gamma,w)$ and $(\Gamma',w')$ be weighted graphs. We say that $(\Gamma,w)$  specializes to $(\Gamma',w')$, and we write $(\Gamma,w)\rightsquigarrow (\Gamma',w')$, if $\Gamma'$ is obtained from $\Gamma$ by collapsing some of its edges and if the weight function of the specialized curve changes according to the following rule: if we contract a loop $e$ around a vertex $v$ then we increase the weight of $v$ by one; if we contract an edge $e$ between two distinct vertices $v_1$ and $v_2$ then we obtain a new vertex with weight equal to $w(v_1)+w(v_2)$.
\end{defi}

Note that if  $(\Gamma,w)\rightsquigarrow (\Gamma',w')$ then $(\Gamma,w)$ and $(\Gamma', w')$ have the same genus; if moreover $(\Gamma,w)$ is stable then $(\Gamma', w')$ is stable.

\begin{defi}
Let $(\Gamma,w,h)$ be a marked weighted graph. Consider a specialization  $(\Gamma,w)\rightsquigarrow (\Gamma',w')$ and call $S\subseteq E(\Gamma)$ the subset consisting of the edges of $\Gamma$ that are contracted in order to obtain $\Gamma'$. Now pick any spanning forest of $S$ and contract the edges in it. This operation yields a marking $h'$ on $(\G')^{w'}$ since it contracts no cycles. Picking a different spanning forest produces an equivalent marking, so we have a marked weighted graph $(\G',w',h')$ that is well-defined up to equivalence.  We say that $(\G',w',h')$ is a {\em specialization} of $(\G,w,h)$ in this situation and we write $(\Gamma,w,h)\rightsquigarrow(\Gamma',w',h')$
\end{defi}

Just as in \S\ref{ss:tgp}, a specialization $(\Gamma,w,h)\rightsquigarrow(\Gamma',w',h')$ yields an obvious inclusion $\iota\colon E(\Gamma')\ra E(\Gamma)$ that induces, in turn, a lattice-preserving linear map $L_\iota\colon \ov\C_{(\Gamma',w',h')}\hookrightarrow\ov\C_{(\Gamma,w,h)}$ sending $\ov\C_{(\Gamma',w',h')}$ to a face of $\ov\C_{(\Gamma,w,h)}$.  Using these linear maps, given a stable marked weighted graph $(\Gamma,w,h)$, we have a natural identification of sets
\begin{equation}\label{celtrop-dec-2}
\ov \C_{(\Gamma,w,h)}=\coprod_{(\Gamma,w,h)\rightsquigarrow(\Gamma',w',h')}\C^0_{(\Gamma',w',h')},
\end{equation}
Summarizing, we have:

\begin{lemma}\label{L:spec-ideal-2}
Let $(\Gamma, w,h)$ be (an equivalence class of) a stable marked weighted graph of genus $g$. Then the faces of ${\ov\C_{(\Gamma,w,h)}}$ are in bijective correspondence with cones $\ov\C_{(\Gamma',w',h')}$, where  $(\Gamma',w',h')$ is a specialization of $(\Gamma, w,h)$.  The identification is given by the lattice-preserving linear map $L_\iota\colon \ov \C_{(\Gamma',w',h')}\hookrightarrow\ov \C_{(\Gamma,w,h)}$.
 %sending $\C_{(\Gamma',w',h')}$ to a face of~$\C_{(\Gamma,w,h)}$.
\end{lemma}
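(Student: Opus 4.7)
The plan is to mimic the argument used for the pure case (Lemma \ref{L:specialization-ideal}) while carefully tracking the new data: the weight function and the fact that the marking is of the virtual graph $\Gamma^w$ rather than of $\Gamma$ itself. The key point is that a face of $\ov\C_{(\Gamma,w,h)}=\R^{|E(\Gamma)|}_{\ge 0}$ is described by a subset $S\subseteq E(\Gamma)$ (those edges on which the length function vanishes), and I want to exhibit a natural bijection between such subsets and specializations $(\Gamma,w,h)\rightsquigarrow (\Gamma',w',h')$ up to equivalence.

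First I would spell out the forward map. Given $S\subseteq E(\Gamma)$, contract the edges in $S$ using the rule of Definition \ref{d:specialization}: loops in $S$ become extra weight at their base vertex, while non-loop edges merge their endpoints and sum the two weights. This produces a well-defined weighted graph $(\Gamma',w')$ with $g(\Gamma',w')=g(\Gamma,w)$; stability of $(\Gamma',w')$ follows from stability of $(\Gamma,w)$ because vertices of the contracted graph of weight $0$ inherit valence $\ge 3$ from the original (no loop is collapsed at such a vertex without that vertex simultaneously gaining weight). To obtain the marking $h'$, choose a spanning forest $F$ of the subgraph of $\Gamma$ spanned by $S$ and contract its edges first; this produces an intermediate graph whose virtual graph agrees with $(\Gamma')^{w'}$, and the induced marking is well-defined up to homotopy because contracting a forest is a homotopy equivalence. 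The remaining edges in $S\setminus F$ are loops in the intermediate graph, and contracting them to virtual loops corresponds to the bijection on conjugacy classes described in Definition \ref{d:marked-weighted-equ}(iii). A different choice of spanning forest $F$ yields an equivalent marking by exactly the virtual-loop substitution rule of Definition \ref{d:marked-weighted-equ}; this is the key well-definedness check.

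Next I would construct the inverse map. Given a specialization $(\Gamma,w,h)\rightsquigarrow (\Gamma',w',h')$, Definition \ref{d:specialization} provides an inclusion $\iota\colon E(\Gamma')\hookrightarrow E(\Gamma)$ whose complement $S:=E(\Gamma)\setminus \iota(E(\Gamma'))$ is exactly the set of contracted edges. To see that the two assignments are mutually inverse (up to equivalence) I would check that reconstructing the specialization from $S$ recovers the same $(\Gamma',w')$ on the nose and the same $h'$ up to the equivalence of Definition \ref{d:marked-weighted-equ}; the first is obvious from the contraction rule, and the second follows because any two ways of carrying out the contraction differ precisely by the choice of spanning forest, which the equivalence relation absorbs.

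Finally I would verify the geometric claim about $L_\iota$. The inclusion $\iota$ of edge sets gives the coordinate-inclusion $L_\iota\colon \R^{|E(\Gamma')|}\hookrightarrow \R^{|E(\Gamma)|}$ which sends a length function on $E(\Gamma')$ to the length function on $E(\Gamma)$ that is $0$ on $S$. Its image is precisely the face $F_S$ of $\ov\C_{(\Gamma,w,h)}$, and the map is clearly integral; the lattice-preserving condition of Definition \ref{d:integral}, namely $\Z^{|E(\Gamma')|}=\Z^{|E(\Gamma)|}\cap \mathrm{span}(F_S)$, is immediate from the fact that $L_\iota$ is just the inclusion of standard coordinate subspaces. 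The main subtle point in the whole argument is the well-definedness up to equivalence of the marking $h'$ under the two-step spanning-forest-then-loop-contraction procedure; once this is established, everything else follows by direct bookkeeping on edge sets and their indicator subsets, paralleling the pure case.
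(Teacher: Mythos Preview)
Your proposal is correct and follows the same approach as the paper: faces of $\ov\C_{(\Gamma,w,h)}$ correspond to subsets $S\subseteq E(\Gamma)$, and contracting $S$ yields a specialization, with the inverse given by the edge-set inclusion $\iota$ and $L_\iota$ being the corresponding coordinate inclusion. The paper does not actually supply a separate proof; the lemma is stated as a summary (``Summarizing, we have:'') of the paragraph preceding it, which records the map $L_\iota$ and the decomposition \eqref{celtrop-dec-2}. Your write-up is considerably more detailed than the paper's treatment---in particular, you spell out the well-definedness of $h'$ under different choices of spanning forest, a point the paper absorbs into the definition of specialization itself---but the underlying argument is the same.
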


\begin{defi}\label{Tg}
The tropical Teichm\"uller space of genus $g$ is the topological space (with respect to the quotient
topology)
$$\Tg:=\left(\coprod {\ov\C_{(\Gamma,w,h)}}\right)/ \approx$$
where the disjoint union (endowed with the disjoint union topology)
runs over all stable marked weighted graphs $(\Gamma,w,h)$ of genus $g$,
and $\approx\,$ is the equivalence relation generated by the lattice-preserving linear maps $L_\iota$.
\end{defi}

\begin{prop}\label{p:tg-is-isf}
The topological space $\Tg$ is a stacky fan with cells $\C^0_{(\Gamma,w,h)}$.  It parametrizes marked tropical curves $(\Gamma,w,l,h)$ of genus~$g$.
\end{prop}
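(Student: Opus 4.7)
The plan is to follow the template of Proposition~\ref{p:tgp-is-isf}, adapted to handle the additional data of vertex weights and the equivalence relation on markings that allows reshuffling virtual loops. First I would verify that the data assembled in Definition~\ref{Tg} fits the format of Definition~\ref{d:sf}: the vector spaces $\R^{|E(\Gamma)|}$ carry their standard lattices $\Z^{|E(\Gamma)|}$, the closed cones $\ov\C_{(\Gamma,w,h)}$ are rational polyhedral, and Lemma~\ref{L:spec-ideal-2} supplies, for every specialization $(\Gamma,w,h)\rightsquigarrow(\Gamma',w',h')$, the required lattice-preserving linear inclusion $L_\iota$ identifying $\ov\C_{(\Gamma',w',h')}$ with a face of $\ov\C_{(\Gamma,w,h)}$. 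These are precisely the face inclusions generating $\approx$.

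Next, the substantive check is that the natural map
\[
\coprod \C^0_{(\Gamma,w,h)}\;\longrightarrow\;\Bigl(\coprod \ov\C_{(\Gamma,w,h)}\Bigr)\big/\approx\;=\;\Tg
\]
is a bijection, where both disjoint unions run over equivalence classes of stable marked weighted graphs of genus $g$. Surjectivity is immediate from the stratification (\ref{celtrop-dec-2}) of each closed cone $\ov\C_{(\Gamma,w,h)}$ into the relative interiors of its faces. For injectivity, I would argue that no generator of $\approx$ identifies two distinct points lying in relative interiors of open cones: the face inclusions $L_\iota$ from Lemma~\ref{L:spec-ideal-2} send the interior $\C^0_{(\Gamma',w',h')}$ onto the interior of the corresponding face of $\ov\C_{(\Gamma,w,h)}$, and a composition of such inclusions between two open top cells $\C^0_{(\Gamma,w,h)}$ and $\C^0_{(\Gamma',w',h')}$ forces $(\Gamma,w,h)$ and $(\Gamma',w',h')$ to be equivalent as marked weighted graphs, in which case we have not left the single disjoint summand. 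This yields the asserted description of the cells of $\Tg$ and shows that $\Tg$ is a stacky fan.

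Finally, the parametrization claim is a direct consequence of Lemma~\ref{l:no-stackiness-2}: points of $\C^0_{(\Gamma,w,h)}$ correspond bijectively to equivalence classes of marked metric weighted graphs $(\Gamma,w,l,h)$ with the prescribed combinatorial type, i.e.\ marked tropical curves $(C,h)$ of combinatorial type $(\Gamma,w,h)$ in the sense of Definition~\ref{D:trop-curves}. Summing over combinatorial types via the bijection just established, $\Tg$ parametrizes marked tropical curves of genus $g$.

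The main subtlety I expect is the injectivity step, more precisely the verification that the equivalence relation generated by the face inclusions does not collapse distinct points within a single $\C^0_{(\Gamma,w,h)}$. This reduces to showing that an isomorphism of $\G^w$ sending $h$ to an equivalent marking and also respecting a point $l\in\C^0_{(\Gamma,w,h)}$ must act trivially on $E(\Gamma)$; this is exactly the content of Lemma~\ref{l:no-stackiness-2}, which in turn relies on the virtual-loop conditions (i)--(iii) of Definition~\ref{d:marked-weighted-equ} to rule out nontrivial isometries that merely permute virtual loops. Once that lemma is invoked, the rest of the argument is a direct transcription of the pure case treated in Proposition~\ref{p:tgp-is-isf}.
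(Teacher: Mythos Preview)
Your proposal is correct and follows essentially the same approach as the paper: reduce to showing the map $\coprod \C^0_{(\Gamma,w,h)} \to \Tg$ is a bijection, get surjectivity from \eqref{celtrop-dec-2}, get injectivity from the fact that the face inclusions $L_\iota$ of Lemma~\ref{L:spec-ideal-2} never identify distinct interior points, and deduce the parametrization from Lemma~\ref{l:no-stackiness-2}. One small comment: your final paragraph slightly conflates two roles of Lemma~\ref{l:no-stackiness-2}. The injectivity of the displayed map does not itself require that lemma; once the disjoint union is indexed by \emph{equivalence classes} of marked weighted graphs, the only self-map $L_\iota$ of a cone is the identity (a specialization contracting no edges), so no collapsing within a single $\C^0_{(\Gamma,w,h)}$ can occur. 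Lemma~\ref{l:no-stackiness-2} is needed rather to ensure that the faces of $\ov\C_{(\Gamma,w,h)}$ really are in bijection with equivalence classes of specializations (underpinning Lemma~\ref{L:spec-ideal-2}) and to establish the parametrization statement.
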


\begin{proof}
The proof is analogous to the one for Proposition~\ref{p:tgp-is-isf}. For the first part, we only need to prove that the map
\begin{equation}\label{tg-bij}
\coprod\C^0_{(\Gamma,w,h)}\ra \left(\coprod {\ov\C_{(\Gamma,w,h)}}\right){/ \approx}
\end{equation}
is bijective.  It is injective because the linear maps in Lemma~\ref{L:spec-ideal-2} never identify two different points in the relative interiors of cones.  It is surjective by (\ref{celtrop-dec-2}).  Finally, from the bijection (\ref{tg-bij}) and Lemma~\ref{l:no-stackiness-2}, it follows that $\Tg$ parametrizes  marked tropical curves of genus $g$.
\end{proof}

\subsection{Action of outer automorphism group on Outer Space}\label{outergroup}

The group $$\Out(F_g)=\Aut(F_g)/\Inn(F_g)$$ acts on $X_g$ by changing the markings. More precisely, given $\alpha \in\Aut(F_g)$, consider its geometric realization $\alpha_R:R_g\to R_g$, i.e. the homeomorphism, unique up to homotopy, that fixes the vertex $v$ of $R_g$ and such that the induced automorphism $(\alpha_R)_*:F_g=\pi_1(R_g,v)\to \pi_1(R_g,v)=F_g$ is equal to $\alpha\in \Aut(F_g)$.
 Then define $(\Gamma,l,h)\cdot\alpha=(\Gamma,l,h\circ \alpha_R)$. It is easy to see that this action is well defined and that inner automorphisms act trivially, so we get an action from $\Out(F_g)$ on $X_g$.  We may equally well view this as an action on $\Tgp$, since $\Tgp\cong X_g$ by Corollary~\ref{T:tgp-is-xg}.
Note that the stabilizer of any marked graph $(\Gamma,l,h)$ is equal to the group of isometries of $(\Gamma,l)$, and thus it is finite.

The action of $\Out(F_g)$ on $\Tgp$ extends to an action of $\Out(F_g)$ on $\Tg$, again by changing the marking.

\begin{prop}\label{p:outer-is-admissible}
The $\Out(F_g)$-actions on $\Tgp$ and $\Tg$ are admissible with respect to the ideal stacky fan structures of the latter spaces, in the sense of Definition~\ref{d:admissible-action}.
\end{prop}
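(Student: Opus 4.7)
The plan is to verify Definition~\ref{d:admissible-action} directly, exploiting the fact that the $\Out(F_g)$-action changes only the marking, not the underlying (weighted) graph or its length function. Because of this, the action carries each cell isomorphically to another cell of the same dimension, and the required linear map is essentially a permutation of coordinates coming from a graph isomorphism.

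In detail, fix a system of representatives, one cone $\sigma_i = \C_{(\Gamma,h)}$ (resp.\ $\ov\C_{(\Gamma,w,h)}$) per equivalence class of stable marked (weighted) graphs of genus $g$. Given $\alpha\in\Out(F_g)$, every $(\Gamma,l,h)\in\C^0_{(\Gamma,h)}$ is sent to $(\Gamma,l,h\circ\alpha_R)$. The marked graph $(\Gamma,h\circ\alpha_R)$ is equivalent to a unique representative $(\Gamma',h')=\sigma_j$ in our list, i.e.\ there is an isomorphism of $1$-complexes $\gamma\colon\Gamma\to\Gamma'$ with $(h\circ\alpha_R)\circ\gamma\simeq h'$. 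Although $\gamma$ is only unique up to precomposition with an isometry of $(\Gamma,h\circ\alpha_R)$, such isometries induce the trivial permutation on $E(\Gamma)$ by Remark~\ref{r:no-stackiness} (resp.\ Lemma~\ref{l:no-stackiness-2}). Hence $\gamma$ induces a well-defined bijection $E(\Gamma)\to E(\Gamma')$, and I take $L_{\alpha,i}\colon \R^{|E(\Gamma)|}\to\R^{|E(\Gamma')|}$ to be the corresponding coordinate permutation.

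Being a bijection of standard lattice bases, $L_{\alpha,i}$ is a lattice-preserving linear isomorphism. Under it, subsets $S\subseteq E(\Gamma)$ containing a cycle correspond to subsets $\gamma(S)\subseteq E(\Gamma')$ containing a cycle, so the removed faces match up; thus $L_{\alpha,i}$ carries $\C_{(\Gamma,h)}$ bijectively onto $\C_{(\Gamma',h')}$ (and in the weighted case, the closed cone $\ov\C_{(\Gamma,w,h)}$ onto $\ov\C_{(\Gamma',w',h')}$). Commutativity of the diagram in Definition~\ref{d:admissible-action} is built into the construction: the point $(\Gamma,l)\in\sigma_i$ is sent by $L_{\alpha,i}$ to $(\Gamma',l\circ\gamma^{-1})\in\sigma_j$, whose image in $\Tgp$ (resp.\ $\Tg$) is the class of $(\Gamma',l\circ\gamma^{-1},h')$, which by choice of $\gamma$ is the same class as $(\Gamma,l,h\circ\alpha_R)=\alpha\cdot(\Gamma,l,h)$.

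The only subtlety is ensuring that $L_{\alpha,i}$ is well-defined despite the non-uniqueness of $\gamma$; this is where the remarks on triviality of isometries fixing the marking come in, and it is the only place where the stability hypothesis on the graph is used. Everything else is essentially formal, so I expect the proof to be short once this point is settled.
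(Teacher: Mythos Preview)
Your proposal is correct and follows the same approach as the paper, which simply observes in two sentences that $\alpha\in\Out(F_g)$ changes only the marking and hence carries each cone $\C_{(\Gamma,h)}$ (resp.\ $\ov\C_{(\Gamma,w,h)}$) isomorphically onto another cone via a map that is obviously lattice-preserving. Your added discussion of the well-definedness of $\gamma$, while not incorrect, is unnecessary: Definition~\ref{d:admissible-action} only asks for \emph{some} lattice-preserving $L_{\alpha,i}$ making the square commute, and any choice of $\gamma$ already gives one, since $(\Gamma', l\circ\gamma^{-1}, h')$ is by construction equivalent to $(\Gamma, l, h\circ\alpha_R)$ for every $l$.
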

\begin{proof}
The action of $\al\in\Out(F_g)$ preserves the tropical curve underlying the marked tropical curve, so in each case, each cone $C_{(\G,w,h)}$ of $\Tgp$ is mapped to some cone $C_{(\G,w,h')}$ via a map that is clearly a lattice-preserving isomorphism (and similarly for the cones in $\Tg$.) Thus the conditions of Definition~\ref{d:admissible-action} are satisfied.
\end{proof}

\comment{
\begin{lemma}
Outer space is locally finite for the action of $\Out(F_g)$. (\textbf{Reference for this fact?})
\end{lemma}
}

\section{Moduli space of tropical curves}\label{s:mtrg}

%\subsection{Moduli spaces of (pure) tropical curves}

%We will now summarize the description of the moduli space of tropical curves as introduced by S. Brannetti together with the second and third authors in \cite{BMV}.

The moduli space $\Mtrg$ of tropical curves of genus $g$ was first constructed in \cite{BMV} (see also \cite{capsurvey} and \cite{chan}). We start by reviewing this construction, adapting it to the new definition \ref{d:sf} of stacky
fans, which is slightly different from the definitions of stacky fans given in \cite{BMV} and \cite{chan} (see however
Remark \ref{R:old-def}). We also introduce the open subspace $\Mp\subset \Mtrg$ parametrizing pure tropical curves of genus $g$ and show that it is an ideal stacky fan.

%Let $\Mtrg$ and $\Mp$ be the moduli spaces parametrizing tropical curves and pure tropical curves, respectively, %constructed within the category of stacky fans in
%\cite{BMV}.  The definition of stacky fans in loc. cit. is slightly different from our definition \ref{d:sf}, so we %will give a sketch of the construction in what follows.

\begin{defi}\label{D:simplicial-cones-3}
Given a stable weighted graph $(\Gamma, w)$ of genus $g$, fix a numbering on its set of edges $E(\Gamma)$.  Write
$$\C_{(\Gamma,w)}^0 := \R^{|E(\Gamma)|}_{>0} \qquad\text{and}\qquad \ov \C_{(\Gamma,w)}:=\mathbb R^{|E(\Gamma)|}_{\ge0}$$
for the open and closed simplicial cones, respectively, in $\mathbb R^{|E(\Gamma)|}$.
\end{defi}

Let $(\G,w)$ be a stable, weighted graph and let $\Aut(\Gamma,w)$ be its automorphism group.
Then $\Aut(\Gamma,w)$ acts on $\C_{(\Gamma,w)}^0$ and on its closure $\ov \C_{(\Gamma,w)}$ by permuting
coordinates.

\begin{remark}\label{r:stackiness}
The points of $\C_{(\Gamma,w)}^0/\Aut(\Gamma,w)$ are in bijection with
the set of tropical curves of combinatorial type $(\Gamma, w)$, i.e. tropical curves of the form $(\Gamma,w,l)$ for some length function $l$.
\end{remark}

\begin{remark}\label{R:Aut-Lin}
For each $\alpha\in\Aut(\Gamma,w)$, denote by $L_{\alpha}: \ov\C_{(\Gamma,w)}\to \ov\C_{(\Gamma,w)} $ the induced lattice-preserving linear map.
Note that the points of $\C_{(\Gamma,w)}^0/\Aut(\Gamma,w)$ are in bijection with the points in the quotient $\C_{(\Gamma,w)}^0/\sim$, where $\sim$ is the equivalence relation generated by the lattice preserving linear maps $L_\alpha|_{\C_{(\Gamma,w)}^0}$, $\alpha\in\Aut(\Gamma,w)$.
\end{remark}

Let now $(\Gamma,w)\rightsquigarrow (\Gamma',w')$ be a specialization of weighted graphs as defined in Definition \ref{d:specialization}.
As in  \S\ref{ss:tg},  a specialization $(\Gamma,w)\rightsquigarrow(\Gamma',w')$ yields an obvious inclusion $\iota\colon E(\Gamma')\ra E(\Gamma)$ that induces, in turn, a lattice-preserving linear map $L_\iota\colon \ov\C_{(\Gamma',w')}\hookrightarrow\ov\C_{(\Gamma,w)}$ sending $\ov\C_{(\Gamma',w')}$ to a face of $\ov\C_{(\Gamma,w)}$.
Analogously to (\ref{celtrop-dec-2}), using these linear maps, given a stable weighted graph $(\Gamma,w)$, we have a natural identification of sets
\begin{equation}\label{celtrop-dec3}
\ov \C_{(\Gamma,w)}=\coprod_{(\Gamma,w)\rightsquigarrow(\Gamma',w')}\C^0_{(\Gamma',w')},
\end{equation}
Similarly to Lemma \ref{L:spec-ideal-2}, we have the following

%given a stable marked weighted graph $(\Gamma,w,h)$These linear maps,, we have a natural identification of sets
%\begin{equation}\label{celtrop-dec-2}
%\ov \C_{(\Gamma,w,h)}=\coprod_{(\Gamma,w,h)\rightsquigarrow(\Gamma',w',h')}\C^0_{(\Gamma',w',h')},
%\end{equation}

\begin{lemma}\label{L:spec-ideal-3}
Let $(\Gamma, w)$ be a stable graph of genus $g$. Then the faces of $\ov\C_{(\Gamma,w)}$ are in bijective correspondence with the cones $\ov\C_{(\Gamma',w')}$, where  $(\Gamma',w')$ is a specialization of $(\Gamma, w)$.  The identification is given by the lattice-preserving linear map $L_\iota\colon \ov \C_{(\Gamma',w')}\hookrightarrow\ov \C_{(\Gamma,w)}$. %sending $\C_{(\Gamma',w')}$ to a face of~$\C_{(\Gamma,w)}$.
\end{lemma}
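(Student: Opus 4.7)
The plan is to run the proof in direct parallel to Lemma \ref{L:spec-ideal-2}, simply forgetting the marking data. The starting observation is that since $\ov\C_{(\Gamma,w)} = \R^{|E(\Gamma)|}_{\ge 0}$ is a closed orthant, its nonempty faces are in canonical bijection with subsets $S\subseteq E(\Gamma)$, via
\[ S \longmapsto F_S := \{ l \in \R^{|E(\Gamma)|}_{\ge 0} : l(e) = 0 \text{ for all } e \in S \}. \]
Thus the lemma amounts to matching subsets $S$ with specializations of $(\Gamma,w)$ in a canonical, lattice-preserving way.

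From a subset $S$ I would then build a specialization $(\Gamma,w)\rightsquigarrow(\Gamma',w')$ by contracting exactly the edges of $S$ according to the weight rule in Definition \ref{d:specialization}: each contracted loop at a vertex $v$ increases $w(v)$ by $1$, and each contracted non-loop edge merges its two endpoints into a single new vertex whose weight is the sum of the two original weights. By construction this produces a specialization of $(\Gamma,w)$ with edge set $E(\Gamma') = E(\Gamma)\setminus S$. Conversely, every specialization $(\Gamma,w)\rightsquigarrow(\Gamma',w')$ canonically determines the subset $S = E(\Gamma)\setminus \iota(E(\Gamma'))$, where $\iota$ is the inclusion of edge sets induced by the contraction; these two assignments are manifestly inverse to one another, giving the bijection claimed in the lemma.

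It remains to identify $\ov\C_{(\Gamma',w')}$ with the face $F_S$ via $L_\iota$. The inclusion $\iota\colon E(\Gamma')\hookrightarrow E(\Gamma)$ induces the linear map $L_\iota\colon \R^{|E(\Gamma')|}\to \R^{|E(\Gamma)|}$ sending a length function $l'$ to its extension by zero on $S$. This map visibly identifies $\ov\C_{(\Gamma',w')}$ with $F_S$, and since it sends each standard coordinate vector of $\R^{|E(\Gamma')|}$ to a standard coordinate vector of $\R^{|E(\Gamma)|}$, the lattice-preserving condition of Definition \ref{d:integral} is automatically satisfied.

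The argument is essentially routine; the only thing to verify carefully is the compatibility between the face/subset correspondence in $\ov\C_{(\Gamma,w)}$ and the weight convention in the definition of specialization, but this is immediate from Definition \ref{d:specialization}. The main ``obstacle'' is really just bookkeeping: one must simply note that, unlike the marked version in Lemma \ref{L:spec-ideal-2}, there is no need to choose a spanning forest because we do not have to produce a marking on $(\Gamma',w')$.
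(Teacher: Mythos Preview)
Your proposal is correct and follows exactly the approach the paper takes: the paper gives no separate proof of this lemma but simply states it as a summary of the preceding discussion, noting it is ``Similarly to Lemma~\ref{L:spec-ideal-2}''; you have spelled out precisely that parallel argument, matching faces of the orthant with subsets $S\subseteq E(\Gamma)$ and hence with specializations via the contraction rule of Definition~\ref{d:specialization}.
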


\begin{defi}\label{Mg}
The moduli space of tropical curves of genus $g$ is the topological space (with respect to the quotient
topology)
$$\Mtrg:=\left(\coprod {\ov\C_{(\Gamma,w)}}\right)/ \approx$$
where the disjoint union (endowed with the disjoint union topology)
runs over all stable weighted graphs $(\Gamma,w)$ of genus $g$,
and $\approx\,$ is the equivalence relation generated by the lattice-preserving linear maps $L_\iota$ and $L_{\alpha}$.
\end{defi}

Let $\Gamma$ be a stable graph of genus $g$, which can view as the stable weighted graph $(\Gamma,\un 0)$ with zero weight function.
 Denote by $\C_{\Gamma}$ the ideal subcone of $\ov\C_{\Gamma}:=\ov\C_{(\Gamma,\un 0)}$ whose ideal faces correspond (via Lemma \ref{L:spec-ideal-3}) to the specializations $(\Gamma, \un 0)\rightsquigarrow (\Gamma',\un 0)$  such that the weight function remains identically zero.
 In other words, the points of  $\C_{\Gamma}$ correspond to pure tropical curves whose combinatorial type is a specialization of $\Gamma$. Clearly $\C_{\Gamma}$ contains the open cone $\C^0_{\Gamma}:=\C^0_{(\Gamma, \un 0)}$.

\begin{defi}\label{Mgpure}
The moduli space of pure tropical curves of genus $g$ is the topological space (with respect to the quotient
topology)
$$\Mp:=\left(\coprod {\C_{\Gamma}}\right)/ \approx$$
where the disjoint union (endowed with the disjoint union topology)
runs over all stable graphs $\Gamma$ of genus $g$,
and $\approx\,$ is the equivalence relation generated by the lattice-preserving linear maps $L_\iota$ and $L_{\alpha}$.
\end{defi}

\begin{prop}\label{p:Mg-is-isf}
The topological space $\Mtrg$ (resp. $\Mp$) is a stacky fan (resp. an ideal stacky fan) with cells $C^0_{(\Gamma,w)}/\!\sim$ (resp. $C^0_\Gamma/\!\sim$) as $(\Gamma,w)$ (resp. $\Gamma$) runs over all stable weighted graphs $(\Gamma,w)$ (resp.~stable graphs $\Gamma$) of genus $g$.  It parametrizes tropical curves $(\Gamma,w,l)$  (resp.~pure tropical curves $(\Gamma,l)$) of genus~$g$.
\end{prop}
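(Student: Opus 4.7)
The proof will follow the pattern of Propositions~\ref{p:tgp-is-isf} and \ref{p:tg-is-isf}, so I will indicate the parallel structure and point out the only essential new feature, which is the presence of the automorphism identifications $L_\alpha$. Throughout, let $\sim$ denote the equivalence relation on $\C^0_{(\Gamma,w)}$ generated by the restrictions of the lattice-preserving maps $L_\alpha$ with $\alpha\in\Aut(\Gamma,w)$.

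The plan is to reduce everything to checking that the natural map
\begin{equation*}
\Phi\colon\coprod_{(\Gamma,w)} \bigl(\C^0_{(\Gamma,w)}/\!\sim\bigr)\;\longrightarrow\;\Bigl(\coprod_{(\Gamma,w)}\ov\C_{(\Gamma,w)}\Bigr)/\!\approx
\end{equation*}
is a bijection (and analogously for the pure version with $\C_\Gamma$ in place of $\ov\C_{(\Gamma,w)}$). Surjectivity is an immediate consequence of the decomposition~\eqref{celtrop-dec3}: any point of some $\ov\C_{(\Gamma,w)}$ lies in the relative interior $\C^0_{(\Gamma',w')}$ of a unique face, where $(\Gamma,w)\rightsquigarrow(\Gamma',w')$, and this face is identified with $\C^0_{(\Gamma',w')}$ via~$L_\iota$. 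For injectivity, the key observation is that neither type of generating map in $\approx$ mixes the relative interior of a closed cone with that of a different one: the inclusions $L_\iota$ send $\C^0_{(\Gamma',w')}$ into $\C^0_{(\Gamma',w')}$ sitting as a face of $\ov\C_{(\Gamma,w)}$ (the image is never in the relative interior of $\ov\C_{(\Gamma,w)}$ unless the specialization is trivial), and the maps $L_\alpha$ come from automorphisms and hence send $\C^0_{(\Gamma,w)}$ to itself. Hence if two points of open cones $\C^0_{(\Gamma,w)}$ and $\C^0_{(\Gamma',w')}$ become equal in the quotient, then $(\Gamma,w)$ and $(\Gamma',w')$ must have the same number of edges and be related by a chain of $L_\iota$'s and $L_\alpha$'s that amount to an isomorphism of weighted graphs; in particular, they are identified already by a single $L_\alpha$, which is precisely the relation defining $\sim$ on a single cone (Remark~\ref{R:Aut-Lin}).

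Once bijectivity of $\Phi$ is established, both statements of the proposition fall out. For the stacky fan structure on $\Mtrg$: by construction $\Mtrg$ is obtained by gluing the closed cones $\ov\C_{(\Gamma,w)}$, each endowed with a lattice, along the lattice-preserving maps $L_\iota$ and $L_\alpha$, which are precisely the face inclusions and automorphisms permitted in Definition~\ref{d:sf}; the bijectivity of $\Phi$ shows that the resulting cells are exactly $\C^0_{(\Gamma,w)}/\!\sim$, as required. For the pure version, the cones $\C_{\Gamma}$ are obtained from $\ov\C_{\Gamma}$ by deleting the faces corresponding to specializations with a loop contraction (which introduces positive weight), so each $\C_\Gamma$ is genuinely an ideal cone in the sense of Definition~\ref{D:ideal-cone}; the same argument as above, restricted to stable graphs with identically zero weight function, shows that $\Mp$ is an ideal stacky fan with cells $\C^0_\Gamma/\!\sim$. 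Finally, the modular interpretation is immediate: by Remarks~\ref{r:stackiness} and~\ref{R:Aut-Lin}, the points of $\C^0_{(\Gamma,w)}/\!\sim=\C^0_{(\Gamma,w)}/\Aut(\Gamma,w)$ are in bijection with the set of tropical curves of combinatorial type $(\Gamma,w)$, and summing over all combinatorial types (resp.\ over all types with $w\equiv\underline 0$) yields all tropical curves (resp.\ all pure tropical curves) of genus~$g$.

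The only point requiring real care is the injectivity of $\Phi$; concretely, one must check that a chain of $\approx$-identifications that starts and ends in the relative interior of open cones cannot move between distinct combinatorial types. This is where the dichotomy between the two types of generators matters: an $L_\iota$ moves a point from the interior of a lower-dimensional cone into the boundary of a higher-dimensional one, while an $L_\alpha$ permutes points of a single open cone. Tracking this distinction along a zig-zag of identifications, exactly as in the proofs of Propositions~\ref{p:tgp-is-isf} and~\ref{p:tg-is-isf}, is the main (though routine) technical step.
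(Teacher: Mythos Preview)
Your proposal is correct and follows essentially the same approach as the paper: reduce to the bijectivity of $\Phi$, obtain surjectivity from the decomposition~\eqref{celtrop-dec3}, obtain injectivity from the observation that the $L_\iota$'s never identify interior points of distinct cones while the $L_\alpha$'s act within a single $\C^0_{(\Gamma,w)}$, and then invoke Remarks~\ref{r:stackiness} and~\ref{R:Aut-Lin} for the modular interpretation. The only difference is that you spell out the zig-zag analysis for injectivity in slightly more detail than the paper does.
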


\begin{proof}
We will prove the statement for $\Mtrg$, the case of $\Mp$ being analogous.
For the first part we only need to prove that
\begin{equation}\label{Mg-bij}
\coprod(\C^0_{(\Gamma,w)}{/ \sim})\ra \left(\coprod {\ov\C_{(\Gamma,w)}}\right){/ \approx}
\end{equation}
is bijective, where the disjoint union runs over all stable weighted graphs $(\Gamma,w)$ of genus $g$; then the rest
follows from the definition of $\Mtrg$.

The map \eqref{Mg-bij} is surjective by \eqref{celtrop-dec3}. In order to prove that it is injective,
consider two points $x$ and $y$ lying in $\C^0_{(\Gamma,w)}$ and $\C^0_{(\Gamma',w')}$, respectively. Then, since the maps $L_\iota$ associated to specializations of weighted graphs never identify two different points in the relative interior of cones, $x$ and $y$ are identified in  $\left(\coprod {\ov\C_{(\Gamma,w)}}\right){/ \approx}$ if and only if $(\Gamma,w)=(\Gamma',w')$ and there is $\alpha\in\Aut(\Gamma,w)$ such that $L_{\alpha} (x)=y$. The injectivity of  \eqref{Mg-bij} now follows from Remark \ref{R:Aut-Lin}.

Finally, combining the bijection \eqref{Mg-bij} with Remarks \ref{r:stackiness} and \ref{R:Aut-Lin},
it follows that $\Mtrg$ parametrizes tropical curves of genus $g$.
\end{proof}

Recall that the outer automorphism group  $\Out(F_g)$ acts on $\Tgp$ and on $\Tg$ by changing the markings (see \S\ref{outergroup}) and that the action is admissible with respect to their ideal stacky fan structures (see Proposition \ref{p:outer-is-admissible}).
We can then form the stratified quotients of $\Tgp /\!/ \Out(F_g)$ and $\Tg /\!/ \Out(F_g)$ which, by Proposition \ref{p:isstackyfan} are again ideal stacky fans endowed with a map of ideal stacky fans from $\Tgp$ and $\Tg$, respectively. The next result shows that these stratified quotients are isomorphic, as ideal stacky fans, to $\Mp$ and $\Mtrg$, respectively.

\begin{prop}\label{P:Mg-quot}
 The moduli space of pure tropical curves (resp. tropical curves) $\Mp$ (resp. $\Mtrg$) is the stratified quotient, hence global quotient, of $\Tgp $ (resp. $\Tg$) modulo the action of $\Out (F_g)$.
%The moduli space of tropical curves $\Mtrg$ is the stratified quotient, hence global quotient, of $\Tg$ modulo the action of $\Out F_g$.
\end{prop}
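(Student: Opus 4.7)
The plan is to construct, for both $\Tg\to\Mtrg$ and $\Tgp\to\Mp$, a natural $\Out(F_g)$-invariant morphism of ideal stacky fans given by ``forgetting the marking,'' then invoke the universal property of the stratified quotient (Remark~\ref{quotient-object}) to obtain an induced morphism $\bar\pi\colon \Tg/\!/\Out(F_g)\to\Mtrg$, and finally verify that $\bar\pi$ is an isomorphism of ideal stacky fans. Since $\Tg/\!/\Out(F_g)\cong\Tg/\Out(F_g)$ by Proposition~\ref{p:strat-is-global}, this proves both the stratified and global quotient statements. The argument for $\Tgp\to\Mp$ is entirely parallel, using the identification of ideal cones $\C_{(\Gamma,h)}=\C_\Gamma$ (both being the orthant $\R^{|E(\Gamma)|}_{\ge0}$ with the faces indexed by subsets of $E(\Gamma)$ containing a cycle removed).

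To define the forgetting morphism, note that for each stable marked weighted graph $(\Gamma,w,h)$ the cone $\ov\C_{(\Gamma,w,h)}$ is canonically identified with $\ov\C_{(\Gamma,w)}$; sending one to the other is lattice-preserving, and it is compatible with the face inclusions $L_\iota$, since any specialization of marked weighted graphs projects to a specialization of the underlying weighted graphs. This defines a morphism of ideal stacky fans $\pi\colon\Tg\to\Mtrg$, and it is $\Out(F_g)$-invariant since the action only modifies $h$. The key input for identifying the quotient is then the following orbit--stabilizer analysis: since any two homotopy equivalences $R_g\to\Gamma^w$ differ by a self-homotopy equivalence of $R_g$, which realizes an element of $\Out(F_g)$, the $\Out(F_g)$-action is transitive on the set of markings of a fixed $(\Gamma,w)$. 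Consequently, $\Out(F_g)$-orbits of stable marked weighted graphs of genus $g$ are in bijection with isomorphism classes of stable weighted graphs $(\Gamma,w)$ of genus $g$. Moreover, each $\alpha\in\Aut(\Gamma,w)$ extends to an automorphism $\tilde\alpha$ of $\Gamma^w$ permuting virtual loops at each vertex, and the assignment $\alpha\mapsto[h^{-1}\circ\tilde\alpha\circ h]$ identifies $\Aut(\Gamma,w)$ with the stabilizer of $(\Gamma,w,h)$ in $\Out(F_g)$; the resulting action on $\ov\C_{(\Gamma,w,h)}=\R^{|E(\Gamma)|}_{\ge0}$ is exactly the standard $\Aut(\Gamma,w)$-action on edges, i.e.\ the maps $L_\alpha$ appearing in Definition~\ref{Mg}.

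Choosing one marked representative $(\Gamma,w,h_{(\Gamma,w)})$ per isomorphism class of stable weighted graph, the stratified quotient $\Tg/\!/\Out(F_g)$ is the disjoint union of the cones $\ov\C_{(\Gamma,w)}$ modulo the relation generated by the composite maps $L_\alpha\circ L_{g,i}$ of Definition~\ref{d:sq2}. By the orbit--stabilizer analysis these composites reduce to exactly two families of generators: the maps $L_\iota$ coming from specializations of weighted graphs (obtained by transporting a specialization of marked weighted graphs back to the chosen representatives via the $\Out(F_g)$-action) and the maps $L_\alpha$ for $\alpha\in\Aut(\Gamma,w)$ (coming from stabilizing elements). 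These are precisely the generators of the equivalence relation defining $\Mtrg$ in Definition~\ref{Mg}, so $\bar\pi$ is an isomorphism of ideal stacky fans. The main technical obstacle will be the careful bookkeeping required to show that the equivalence relation on marked weighted graphs (Definition~\ref{d:marked-weighted-equ}), with its subtle treatment of virtual loops, interacts with the $\Out(F_g)$-action precisely in the way described above---so that no spurious identifications are created between representatives and none of the required ones are missed. Once this is checked, the match of gluing data is immediate.
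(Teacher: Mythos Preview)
Your proposal is correct and follows essentially the same approach as the paper's proof. Both arguments identify the $\Out(F_g)$-orbits of cones with isomorphism classes of stable weighted graphs (via transitivity of the action on markings), and then verify that the composite gluing maps $L_\alpha\circ L_{g,i}$ in the stratified quotient are exactly the specialization maps $L_\iota$ together with the automorphism maps $L_\alpha$ defining $\Mtrg$. Your framing via the forgetful morphism and the universal property of Remark~\ref{quotient-object} is slightly more formal than the paper's direct comparison of cones and gluing data, but the content is identical; one small caution is that the stabilizer of $(\Gamma,w,h)$ need not literally equal $\Aut(\Gamma,w)$ (elements permuting virtual loops at a vertex may enlarge it), but since such elements act trivially on $\ov\C_{(\Gamma,w,h)}$ this does not affect the identification of gluing maps.
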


\begin{proof}
We will again prove the statement only for $\Mtrg$, since the case of $\Mp$ is analogous. Note that the cones of $\Tg$ fall into orbits under the admissible action of $\Out(F_g)$ precisely according to the isomorphism type of $(\G,w)$, since the $\Out(F_g)$ acts transitively on the markings of a weighted graph $(\G,w)$.  So the cells of $\Tg/\!/\Out(F_g)$ are indeed of the form $\ov C_{(\G,w)}$ as $(\G,w)$ ranges over combinatorial types of genus $g$ tropical curves.

Next, the linear maps in Definition~\ref{d:sq2}, along which the cones $\ov C_{(\G,w)}$ are glued, are of the form
$$\ov C_{(\G',w',h')} \longrightarrow\ov C_{(\G',w',h'')}\hooklongrightarrow\ov C_{(\G,w,h)},$$
where the first map, induced by an $\Out(F_g)$-action as in Proposition~\ref{p:outer-is-admissible}, changes the marking, and the second map,  induced by a specialization as in Lemma~\ref{L:spec-ideal-2}, is an inclusion of faces.  It follows that for every isomorphism $(\G',w')$ with a specialization of $(\G,w)$, we obtain a gluing map $\ov C_{(\G',w')} \hookrightarrow \ov C_{(\G,w)}$, and furthermore, all gluing maps in the stratified quotient are of this form.  Such maps correspond precisely to specializations of weighted graphs and to automorphisms of weighted graphs: the former occur when the inclusion $\ov C_{(\G',w',h'')}\hooklongrightarrow\ov C_{(\G,w,h)}$ of faces in the composition above is proper, while the latter occur when the inclusion is non-proper, that is, bijective.

%The proof is the same for both $\Mp$ and $\Mtrg$, so we will do it here only for $\Mp$.

%Start by noticing that the statement is clear at the level of sets: $\Out(F_g)$ acts by changing the marking so $\Tgp/\Out(F_g)$, which by Lemma \ref{p:strat-is-global} is isomorphic to the stratified quotient $ \Tgp /\!/ \Out(F_g)$, parametrizes stable pure tropical curves, so is in bijection with $\Mp$.

%Now, to prove the statement for stacky fans we start by observing that the orbits of the action of $\Out(F_g)$ on $\Tgp$ correspond to combinatorial types of stable graphs of genus $g$, so they are in bijective correspondence with the cells of $\Mp$.
%It remains to check that the lattice-preserving linear maps induced by the specialization maps $L_\iota$ on $\Tgp$ and the action of $\Out(F_g)$ yield precisely the collection of maps $L_\alpha$ and $L_\iota$ on $\Mp$.
%It is clear that all specialization maps $L_\iota$ in $\Tgp$ yield specialization maps in $\Mp$ and that all these are obtained from specialization maps in $\Tgp$.
%Let now $\alpha\in\Aut(\Gamma)$ be a non-trivial automorphism of $\Gamma$. Then, given a marking $h$ of $\Gamma$, $\alpha\circ h$ is a different marking of $\Gamma$, so there is $\beta\in\Out(F_g)$ such that $h\cdot \beta=\alpha\circ h$. This implies that $L_{\beta,\Gamma}:\C_{(\Gamma,h)}\ra\C_{(\Gamma,h\cdot \beta)}$ coincides with $L_\alpha:\C_\Gamma\to\C_\Gamma$. On the other hand, it is clear that two marked graphs that get identified when one forgets about the marking must be isomorphic, so the statement follows.

\end{proof}

\begin{cor}\label{C:quot-Xg}
We have a homeomorphism
$$\Mp\cong X_g/\Out(F_g).$$
\end{cor}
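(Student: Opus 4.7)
The plan is to chain together three results already established earlier in the paper. By Proposition~\ref{P:Mg-quot} we have an isomorphism of ideal stacky fans
\[
\Mp \;\cong\; \Tgp /\!/\,\Out(F_g),
\]
realizing $\Mp$ as the stratified quotient of $\Tgp$ by the admissible $\Out(F_g)$-action provided by Proposition~\ref{p:outer-is-admissible}. Then Proposition~\ref{p:strat-is-global}, which identifies stratified quotients of ideal stacky fans with the corresponding global topological quotients, gives a homeomorphism
\[
\Tgp/\!/\,\Out(F_g) \;\cong\; \Tgp/\Out(F_g).
\]
Finally, Corollary~\ref{T:tgp-is-xg} provides a homeomorphism $\Tgp \cong X_g$ which, by construction, is a bijection on marked pure tropical curves and therefore intertwines the $\Out(F_g)$-actions on the two spaces (both actions are given by precomposing the marking $h\colon R_g\to\Gamma$ with the geometric realization $\alpha_R$ of $\alpha\in\Out(F_g)$, as in \S\ref{outergroup}). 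Passing to quotients yields
\[
\Tgp/\Out(F_g) \;\cong\; X_g/\Out(F_g),
\]
and composing the three homeomorphisms gives the claim.

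The only point that requires a brief verification, rather than a pure citation, is the equivariance of the homeomorphism $\Tgp \cong X_g$ with respect to the $\Out(F_g)$-actions; but this is immediate from the fact that both sides parametrize the same equivalence classes of marked stable metric graphs of genus $g$ (Proposition~\ref{p:tgp-is-isf} and Fact-Definition~\ref{F:x-g}) and that the action on each side is described identically in terms of the marking. I do not expect any serious obstacle: the work has been done in the preceding results, and this corollary is essentially their formal combination.
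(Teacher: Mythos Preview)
Your proof is correct and follows essentially the same approach as the paper, which simply says ``Combine Proposition~\ref{P:Mg-quot} with Corollary~\ref{T:tgp-is-xg}.'' You have spelled out the intermediate step via Proposition~\ref{p:strat-is-global} (which the paper folds into the phrase ``stratified quotient, hence global quotient'' in Proposition~\ref{P:Mg-quot}) and made the equivariance of the homeomorphism $\Tgp\cong X_g$ explicit, but the logical content is the same.
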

\begin{proof}
Combine  Proposition \ref{P:Mg-quot} with Corollary~\ref{T:tgp-is-xg}.

%Notice that in the case of $\Mp$, since the action of $\Out(F_g)$ on $\Tgp$ is locally finite, by Lemma \ref{L:locally-finite} we get that $\Mp$ is actually the global quotient of Outer space $X_g$ endowed with the subspace topology from $\R^\C$ by the action of $\Out(F_g)$.
\end{proof}

%\begin{prop}
%The moduli space of tropical curves $M_g$ is the stratified quotient, hence global quotient, of $\Tg$ modulo the action of $\Out F_g$.
%\end{prop}

\section{Tropical Siegel space and moduli space of tropical abelian varieties}\label{S:admi-deco}

The aim of this section is to introduce the moduli space of tropical abelian varieties and a cover of it, which we
call the tropical Siegel space, parametrizing marked tropical abelian varieties.
Our construction will depend upon the choice of an admissible decomposition of the cone of positive definite
quadratic forms, which we now review. Our treatment of this and of several other important definitions in this section follows \cite{BMV}.

\subsection{Admissible decompositions}\label{S:adm-deco}

We denote by $\R^{\binom{g+1}{2}}$ the vector space of quadratic forms
in $\R^g$ (identified with $g\times g$ symmetric matrices with
coefficients in $\R$) and by $\O$ the cone in $\R^{\binom{g+1}{2}}$ of positive
definite quadratic forms. The closure $\ov{\O}$ of $\O$ inside $\R^{\binom{g+1}{2}}$ is the cone
of positive semi-definite quadratic forms. We will be working with a partial closure of the cone $\O$ inside $\ov{\O}$, the so called rational closure of $\O$ (see \cite[Sec. 8]{NamT}).

\begin{defi}\label{D:rat-forms}
A positive definite quadratic form $Q$ is said to be \emph{rational} if the null space $\Null(Q)$ of $Q$
(i.e. the biggest subvector space $V$ of $\R^g$ such that $Q$ restricted to $V$ is identically zero)
admits a basis with elements in $\Q^g$.

We will denote by $\Ort$ the cone of rational positive semi-definite quadratic forms.
\end{defi}

The group $\GL_g(\Z)$ acts on the vector space $\R^{\binom{g+1}{2}}$ of quadratic forms
via the usual law $h\cdot Q:= h Q h^t$, where $h\in \GL_g(\Z)$ and $h^t$ is the transpose matrix.
%$Q$ is a quadratic form on $\R^g$.
Clearly, the cones $\O$ and $\Ort$ are preserved by the action of $\GL_g(\Z)$.

\begin{remark}\label{rat-qua}
It is well-known (see \cite[Sec. 8]{NamT}) that a positive semi-definite
quadratic form $Q$ in $\R^g$ belongs to $\Ort$ if and only if
there exists $h\in \GL_g(\Z)$ such that
$$hQh^t=
\left(\begin{array}{cc}
Q' & 0 \\
0 &  0 \\
\end{array}\right)
$$
for some positive definite quadratic form $Q'$ in $\R^{g'}$, with $0\leq g'\leq g$.
\end{remark}

The cones $\O$ and its rational closure $\Ort$ are not polyhedral. However they can be subdivided into rational polyhedral subcones in a nice way, as in the following definition (see \cite[Lemma 8.3]{NamT}).

\begin{defi}\label{decompo}
An \emph{admissible decomposition} of $\Ort$ is a collection $\Sigma=\{\sigma_{\mu}\}$ of rational polyhedral cones of
$\Ort$ such that:
\begin{enumerate}[(i)]
\item If $\sigma$ is a face of $\sigma_{\mu}\in \Sigma$ then $\sigma\in \Sigma$;
\item The intersection of two cones $\sigma_{\mu}$ and $\sigma_{\nu}$ of $\Sigma$ is a face of both cones;
\item If $\sigma_{\mu}\in \Sigma$ and $h\in \GL_g(\Z)$ then $h\cdot \sigma_{\mu}
\cdot h^t\in \Sigma$.
\item $\#\{\sigma_{\mu}\in \Sigma \mod \GL_g(\Z)\}$ is finite;
\item $\cup_{\sigma_{\mu}\in \Sigma} \sigma_{\mu}=\Ort$.
\end{enumerate}
%Thus, it is a rational polyhedral fan with support $\Ort$ which is $\GL_g(\Z)$-admissible in the sense of Definition~\ref{strat-defi} and additionally has only finitely $\GL_g(\Z)$ orbits.
We say that two cones $\sigma_{\mu}, \sigma_{\nu}\in \Sigma$ are equivalent if they are conjugated by an element of
$\GL_g(\Z)$. We denote by $\Sigma/\GL_g(\Z)$ the finite set of equivalence classes of cones in $\Sigma$. Given a cone $\sigma_{\mu}\in \Sigma$, we denote by $[\sigma_{\mu}]$ the equivalence class
containing~$\sigma_{\mu}$.
\end{defi}

\begin{defi}\label{D:admissDecomPositive}
Let $\Sigma=\{\sigma_{\mu}\}$ be an admissible decomposition of $\Ort$. Define $\Sigma_{|\O}$ to be the restriction of $\Sigma$ to $\O$, i.e.,
$$\Sigma_{|\O}:=\{\O\cap\sigma_{\mu}:\sigma_{\mu}\in\Sigma \mbox{ and the intersection is non-empty}\}.$$
\end{defi}

It is clear from the definition that $\cup_{\sigma_{\mu}\in \Sigma_{|\O}}\sigma_{\mu}=\O$.

%We now prove that an admissible decomposition $\Sigma$ of $\Ort$ in the sense of Definition \ref{decompo}
%provides a $\GL_g(\Z)$-admissible decomposition of $\Ort$ in the sense of Definition \ref{strat-defi}, and similarly
%for $\Sigma_{|\O}$.

\begin{prop}\label{P:comp-admdec}
Let $\Sigma$ be an admissible decomposition of $\Ort$ as in Definition \ref{decompo}. Then $\Sigma$
(resp.~$\Sigma_{|\O}$) is a $\GL_g(\Z)$-admissible decomposition of $\Ort$ (resp.~$\O$) in the sense of Definition \ref{strat-defi}.
\end{prop}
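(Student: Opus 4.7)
The plan is to verify Definition~\ref{strat-defi} directly in both cases. For $\Sigma$ this is essentially built into Definition~\ref{decompo}; for $\Sigma_{|\O}$ the main work is showing that $\Sigma_{|\O}$ really is an ideal fan in the sense of Definition~\ref{D:ideal-fan}.

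First I would dispatch $\Sigma$. Conditions~(i), (ii), and (v) of Definition~\ref{decompo} say that $\Sigma$ is a rational polyhedral fan with support $\Ort$, and any closed rational polyhedral cone is trivially an ideal cone (with no faces removed), so $\Sigma$ is an ideal fan. Condition~(i) of Definition~\ref{strat-defi} is exactly condition~(iii) of Definition~\ref{decompo}. For condition~(ii) of Definition~\ref{strat-defi}, the action of $h \in \GL_g(\Z)$ on $\R^{\binom{g+1}{2}}$ is the linear map $Q \mapsto hQh^t$; since $h$ and $h^{-1}$ have integer entries, this map and its inverse both preserve the lattice of integer symmetric matrices, so it is a lattice-preserving linear automorphism.

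For $\Sigma_{|\O}$, the main obstacle will be verifying that it is an ideal fan. The key observation is that for any face $F$ of a cone $\sigma \in \Sigma$ with extreme rays $v_1, \dots, v_k \in \ov{\O}$, every $Q \in F^0$ satisfies $\Null(Q) = \bigcap_i \Null(v_i)$; indeed, positive semi-definite forms satisfy $\Null(A+B) = \Null(A) \cap \Null(B)$, and each point of $F^0$ is a strictly positive combination of the $v_i$. Thus all points of $F^0$ share a common null space, so either $F^0 \subseteq \O$ or $F^0 \cap \O = \emptyset$; call $F$ \emph{degenerate} in the latter case. Since any subface $F' \subseteq F$ has extreme rays forming a subset of the $v_i$, its null space only grows, so the collection of degenerate faces is downward closed under face inclusion. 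In particular, if $\sigma \in \Sigma$ is itself degenerate (its interior is degenerate) then all of $\sigma$ lies in $\Ort \setminus \O$. Taking $Z$ to be the union of the degenerate cones of $\Sigma$, we have $Z = \Ort \setminus \O$, and the construction of Definition~\ref{D:ideal-fan} produces exactly $\Sigma_{|\O}$ as an ideal fan with support $\O$.

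Finally, the $\GL_g(\Z)$-action preserves $\O$ (it preserves positive-definiteness) and permutes the cones of $\Sigma$, so it permutes the ideal cones of $\Sigma_{|\O}$; and it acts by lattice-preserving linear automorphisms for the same reason as before. Hence $\Sigma_{|\O}$ is also an ideal $\GL_g(\Z)$-admissible decomposition of $\O$, as required.
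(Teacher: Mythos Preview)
Your proof is correct and follows essentially the same approach as the paper. Both arguments hinge on the observation that the relative interior of any cone in $\Sigma$ is either entirely contained in $\O$ or entirely contained in $\Ort\setminus\O$: the paper shows this by picking a null vector $x$ for a point in the interior and propagating $xA_ix^t=0$ to all generators, while you phrase the same computation via $\Null(A+B)=\Null(A)\cap\Null(B)$ and the common null space of the extreme rays; your additional remark that degeneracy is downward closed under face inclusion makes the verification of Definition~\ref{D:ideal-fan} slightly more explicit, but the content is the same.
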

\begin{proof}
Let us first prove the statement for $\Sigma$.
The fact that $\Sigma$ is a (rational polyhedral) fan with support equal to $\Ort$ together with the fact that
the action of $\GL_g(\Z)$ on $\Ort$ permutes the cones of $\Sigma$ follows from Definition \ref{decompo}.
In order to prove that $\Sigma$ is a $\GL_g(\Z)$-admissible decomposition of $\Ort$, it remains to check that
the maps induced by the action of an element of $\GL_g(\Z)$ on the cones of $\Sigma$ are linear and lattice-preserving.
In fact, given $h\in\GL_g(\Z)$ and a cone $\sigma\in\Sigma$, then $h$ induces a linear isomorphism between $\sigma$ and $h\cdot\sigma=\sigma'$ for some $\sigma'\in\Sigma$. Moreover, a positive semi-definite matrix $A\in\sigma$ is such that $h\cdot A=hAh^t\in \Z^{\binom{g+1}{2}}$ if and only if $A\in\Z^{\binom{g+1}{2}}$ and the result follows.

Let us now prove the statement for $\Sigma_{|\O}$. We begin by showing that $\Ort\setminus \O$ is a union of cones of $\Sigma$.
Let $A\in\Ort\setminus \O$ and assume that $A$ lies in the relative interior of a cone $\sigma$. It suffices to show that the whole cone $\sigma$ is contained in $\Ort\setminus\O$. Suppose that $\sigma$ is generated by matrices $A_1,\dots, A_k\in\Ort$.
Since $A\in \Ort\setminus \O$ then there exists $x\in \R^n$ such that $xAx^t=0$. Since
$A$ can be written as a strictly positive linear combination of all the $A_i$'s (because it is in the relative interior of
$\sigma$), then we have that $xA_ix^t=0$ for $i=1,\dots, k$. This implies that any element $B\in \sigma$ satisfies $xBx^t=0$, hence that $\sigma\in\Ort\setminus\O$, as required.
We deduce that $\Sigma_{|\O}$ is the (rational polyhedral) ideal fan obtained from $\Sigma$ from removing the cones which are entirely contained in $\Ort\setminus \O$. It is clear that the support of $\Sigma_{|\O}$ is equal to $\O$.
The fact that $\Sigma_{|\O}$ is a $\GL_g(\Z)$-admissible decomposition of $\O$ follows now from the analogous fact for $\Sigma$
together with the fact that the action of $\GL_g(\Z)$ on $\Ort$ preserves $\O$ (see Remark \ref{rat-qua}).

\end{proof}

The following result will be very useful in what follows.
% is well-known and it follows from the fact that the action of the discrete group $\GL_g(\Z)$ on $\O$ is properly discontinuous.

%The rest of this section is devoted to the proof of the following important locally finiteness property of admissible decompositions.

\begin{lemma}\label{L:loc-fin}
For any admissible decomposition $\Sigma$ of $\Ort$, the restriction $\Sigma_{|\O}$ is a locally finite ideal fan.
\end{lemma}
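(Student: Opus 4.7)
The plan is to reduce local finiteness at any $x \in \O$ to the following statement: every compact $K \subset \O$ meets only finitely many cones of $\Sigma$. This reduction is immediate since $\O$ is open in $\R^{\binom{g+1}{2}}$, so every $x \in \O$ admits a compact neighborhood lying in $\O$, whose interior then provides the desired open neighborhood.

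To prove the compact statement I would combine two ingredients. First, the classical proper discontinuity of the $\GL_g(\Z)$-action on $\O$: for any compact $K_1, K_2 \subseteq \O$, the set $\{h \in \GL_g(\Z) : hK_1 \cap K_2 \neq \emptyset\}$ is finite. Second, the finiteness of $\Sigma/\GL_g(\Z)$ from Definition~\ref{decompo}(iv), which gives representatives $\sigma_1, \ldots, \sigma_n$ such that every cone of $\Sigma$ meeting $K$ has the form $h \sigma_j$ for some $j$ with $\sigma_j \cap \O \neq \emptyset$ and some $h \in \GL_g(\Z)$. Moreover, for each such $\sigma_j$ the stabilizer $\Stab(\sigma_j) \subseteq \GL_g(\Z)$ is finite: it permutes the finitely many primitive generators $v_1, \ldots, v_r$ of the extreme rays of $\sigma_j$, and a finite-index subgroup fixes their sum $B = \sum v_i$. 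Writing any $Q \in \sigma_j \cap \O$ as a nonnegative combination of the $v_i$ shows that the common null space of the $v_i$ is trivial, so $B$ is positive definite; since $\GL_g(\Z)$-stabilizers of positive definite forms are classically finite, so is $\Stab(\sigma_j)$.

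It remains to show, for each fixed $j$, that only finitely many cosets of $\Stab(\sigma_j)$ in $\GL_g(\Z)$ yield a translate meeting $K$. I would argue by contradiction: assume infinitely many distinct $h_k \sigma_j$ meet $K$, pick $y_k \in h_k \sigma_j \cap K$, and set $z_k := h_k^{-1} y_k \in \sigma_j$. The $\GL_g(\Z)$-invariance of both $\det(Q)$ and the lattice minimum $m(Q) := \min_{v \in \Z^g \setminus 0} v Q v^t$ gives $\det(z_k) = \det(y_k)$ and $m(z_k) = m(y_k)$, hence bounded and bounded away from $0$ as $y_k$ ranges over $K$. Combining these invariant bounds with membership in $\sigma_j$ and a normalization on a compact cross-section of $\sigma_j$ (for instance inside $\{\mathrm{tr}(Q)=1\}$), one would extract a compact subset of $\sigma_j \cap \O$ containing all the $z_k$; proper discontinuity applied to this compact set and to $K$ then forces $\{h_k\}$ into finitely many cosets of $\Stab(\sigma_j)$, a contradiction. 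The main obstacle is exactly this last compactness step, which requires ruling out the possibility that the $z_k$ escape to infinity along an unbounded ray of $\sigma_j$ while still respecting the invariant bounds, and this is where the rational polyhedral structure of $\sigma_j$ (together with Mahler-type compactness in $\O/\GL_g(\Z)$) must be used carefully.
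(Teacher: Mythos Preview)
Your overall strategy matches the paper's: reduce to showing that a small neighborhood of $x\in\O$ meets only finitely many cones, then use that $\Sigma$ has finitely many $\GL_g(\Z)$-orbits and bound, for each representative $\sigma_j$, the set of $h\in\GL_g(\Z)$ with $h\sigma_j$ meeting the neighborhood. The paper carries out exactly this, taking the neighborhood to be a closed polyhedral subcone $C\subset\O$, and then for the key finiteness of $\{h:h\sigma_\mu\cap C\neq\emptyset\}$ it simply invokes the classical theory of Siegel sets, citing the Corollary on p.~116 of \cite{AMRT}. In other words, the paper treats precisely the step you flag as ``the main obstacle'' as a known consequence of reduction theory, rather than proving it from scratch.

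Your attempt to derive that step from proper discontinuity of $\GL_g(\Z)$ on $\O$ together with Mahler-type bounds has a genuine gap, and it is not merely a matter of care. The invariants $\det$ and $m$ do \emph{not} force the $z_k=h_k^{-1}\!\cdot y_k$ into a compact subset of $\O$: for instance with $y_k=I$ and $h_k=\begin{psmallmatrix}1&k\\0&1\end{psmallmatrix}$ one gets $z_k=\begin{psmallmatrix}1+k^2&-k\\-k&1\end{psmallmatrix}$, which has $\det z_k=1$ and $m(z_k)=1$ for all $k$ but $\operatorname{tr} z_k\to\infty$. So the $z_k$ can escape along an unbounded ray of $\sigma_j$ while all your invariant bounds hold, and normalizing to a trace-one cross-section of $\sigma_j$ does not help because the problem is not scaling but genuine divergence in $\O$. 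If instead you pass to $\O/\GL_g(\Z)$ via Mahler compactness, you obtain $g_k\in\GL_g(\Z)$ with $g_kz_k$ in a fixed compact $K'\subset\O$, i.e.\ $g_k\sigma_j\cap K'\neq\emptyset$, and you are back to the very finiteness statement you are trying to prove. The argument is circular at that point; what is missing is exactly the reduction-theory input (Siegel sets) that the paper imports from \cite{AMRT}. Your finiteness of $\Stab(\sigma_j)$ for $\sigma_j$ meeting $\O$ is correct and nicely argued, but it is not needed once one has the AMRT result, which already bounds the full set of $h$ rather than cosets.
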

\begin{proof}
Let $x\in \O$ and consider a closed polyhedral subcone $C\subset \O$ containing $x$ in its interior (clearly there are plenty of such subcones).
Take now any   cone $\sigma_{\mu} \in \Sigma$. From the classical theory of Siegel sets (see \cite[Chap. II.4]{AMRT}), it follows that the set
$$G_{\sigma_{\mu}}:=\{h\in \GL_g(\Z)\: :\: h\cdot \sigma_{\mu}\cap C\neq \emptyset  \}\subset \GL_g(\Z)$$
is finite (see \cite[Corollary at page 116]{AMRT}). Using this and the fact that there are only finitely many $\GL_g(\Z)$-equivalence classes of cones of $\Sigma$ (see Definition \ref{decompo}), we conclude that
$$\#\{\sigma_{\mu}\in \Sigma \: :\: \sigma_{\mu}\cap C\neq \emptyset\}< \infty .$$
This shows that $\Sigma_{|\O}$ is a locally finite ideal fan with support equal to $\O$.

\end{proof}

\subsection{Examples of admissible decompositions}\label{S:exa-deco}

A priori, there could exist infinitely many admissible decompositions of $\Ort$. However,
as far as we know, only three admissible decompositions are known for every integer $g$
%genus $g$
(see \cite[Chap. 8]{NamT} and the references therein), namely:
\begin{enumerate}[(i)]
\item The perfect cone decomposition (also known as the first Voronoi
decomposition), which was first introduced in \cite{Vor};
\item The 2nd Voronoi decomposition (also known as the L-type decomposition), which was first introduced in \cite{Vor};
\item The central cone decomposition, which was introduced in \cite{Koe}.
\end{enumerate}
Each of them plays a significant (and different) role in the theory of the
toroidal compactifications of the moduli space of principally
polarized abelian varieties (see \cite{Igu}, \cite{alex1}, \cite{SB}).
%We will come back to this later on.

\begin{example}
If $g=2$ then all the above three admissible decompositions coincide.
In Figure \ref{VorFig} we illustrate a section of the
$3$-dimensional cone $\Omega_2^{\rm rt}$, where we represent just some of the infinite
cones of the admissible decompositions. Note that, for $g=2$, there is only one
$\GL_g(\Z)$-equivalence class of maximal dimensional cones, namely
the principal cone $\prin^0$ (see \cite[Sec. (8.10)]{NamT}).

\begin{figure}[h]
\begin{center}
\scalebox{.5}{\epsfig{file=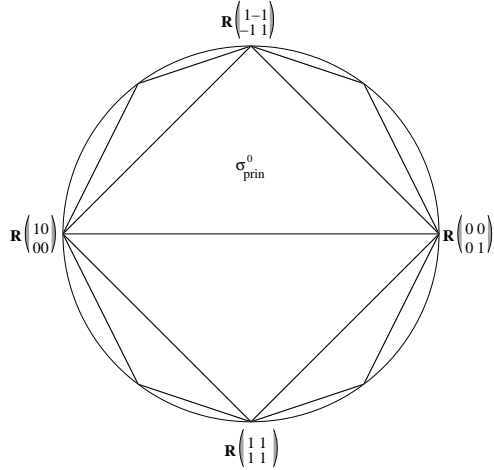}}
\end{center}
\caption{A section of $\Omega_2^{\rm rt}$ and its admissible
decomposition.  In this case, the perfect cone decomposition, 2nd Voronoi decomposition, and central cone decomposition are all the same.}
\label{VorFig}
\end{figure}
\end{example}

In this paper, we will consider the perfect cone decomposition and the 2nd Voronoi decomposition
since these behave well with respect to the period mapping (see Section \ref{S:periodmap}).

%;\footnote{What are we going to say about this comparison? I unfortunately am not able to fill this %part in.}
%so we start by recalling briefly their definitions.

\subsubsection{The perfect cone decomposition $\Per$}\label{S:Perfect}
\indent

In this subsection, we review the definition and the main properties of the perfect cone admissible decomposition
(see \cite{Vor} for more details and proofs, or \cite[Sec. (8.8)]{NamT} for a summary).

Consider the function $\mu:\O\to \R_{> 0}$ defined by
$$\mu(Q):=\min_{\xi\in \Z^g\setminus \{0\}} Q(\xi).$$
It can be checked that, for any $Q\in \O$, the set
$$M(Q):=\{\xi\in \Z^g\: : \: Q(\xi)=\mu(Q) \} $$
is finite and non-empty.
%(\textbf{so, in particular,} $\mu(Q)$ \textbf{above is well defined}).
For any $\xi\in M(Q)$, consider the rank one quadratic form $\xi\cdot \xi^t\in \Ort$.
We denote by $\sigma[Q]$ the rational polyhedral subcone of $\Ort$ given by the convex hull of the rank one forms
obtained from elements of $M(Q)$, i.e.
$$\sigma[Q]:=\R_{\geq 0}\langle \xi\cdot \xi^t\rangle_{\xi\in M(Q)}.
$$
One of the main results of \cite{Vor} is the following

\begin{fact}[Voronoi]
\label{F:main-Per}
The set of cones
$$\Per:= \{ \sigma[Q] \: :\:  Q\in \O \}\cup \{0\} $$
yields an admissible decomposition of $\Ort$, known as the {\em perfect cone decomposition}.
\end{fact}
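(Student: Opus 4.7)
The plan is to verify conditions (i)--(v) of Definition \ref{decompo} for the collection $\Per$. I would begin with some preliminary observations. Since the level sets of a positive definite form on the lattice $\Z^g$ are finite, the set $M(Q)$ is finite for each $Q \in \O$, so each $\sigma[Q]$ is a finitely generated polyhedral cone; each generator $\xi \xi^t$ is a rank-one rational positive semi-definite matrix, and the null space of a conical combination of such matrices is the intersection of the rational null spaces of the summands, so $\sigma[Q] \subseteq \Ort$ as required. For the $\GL_g(\Z)$-equivariance (iii), a direct substitution yields $(hQh^t)(\xi) = Q(h^t\xi)$, hence $\mu(hQh^t)=\mu(Q)$, $M(hQh^t)=h^{-t}M(Q)$, and $h\cdot\sigma[Q]\cdot h^t=\sigma[hQh^t]\in\Per$.

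For the face and intersection axioms (i) and (ii), I would introduce the \emph{domain of perfection} of $Q$, namely $D_Q:=\{Q' \in \O : M(Q') \supseteq M(Q)\}$, a relatively open rational polyhedral cone. The cones $\{D_Q\}$ and the perfect cones $\{\sigma[Q]\}$ are in duality under the trace pairing $\langle A,B\rangle=\operatorname{tr}(AB)$ on symmetric matrices: $\sigma[Q]$ is the cone dual to $D_Q$ in $\Ort$. The face/intersection axioms for $\Per$ then follow formally from the fact that the collection of closures $\{\overline{D_Q}\}$ forms a fan in $\O$, since $M(Q_1)\cup M(Q_2) \subseteq M(Q)$ for any $Q$ in the relative interior of the intersection of the two domains.

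For the support condition (v), the essential content is the classical Voronoi theorem that every $Q_0\in\O$ lies in some maximal perfect cone $\sigma[Q^*]$. I would argue by a deformation procedure: starting with $Q_0$, one perturbs within $D_{Q_0}$ to strictly enlarge $M(\cdot)$, iterating until one reaches a form $Q^*$ whose associated cone $\sigma[Q^*]$ has maximal dimension $\binom{g+1}{2}$ (such $Q^*$ is called \emph{perfect}, and by duality with $D_{Q^*}$ is determined up to scalar by $M(Q^*)$). Then $M(Q_0)\subseteq M(Q^*)$, so $\sigma[Q_0]\subseteq\sigma[Q^*]$; combined with the duality above, one deduces $Q_0 \in \sigma[Q^*]$. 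Extension to boundary forms in $\Ort\setminus\O$ is handled via induction on rank using Remark \ref{rat-qua}.

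The principal obstacle is axiom (iv), the finiteness of $\GL_g(\Z)$-orbits on maximal perfect cones. I would deduce this from Minkowski's reduction theory along the same lines as Lemma \ref{L:loc-fin}: every $\GL_g(\Z)$-orbit in $\O$ meets a Siegel set, and on such a set, after normalizing $\mu(Q)=1$, the minimum vectors $M(Q)$ lie in a bounded region of $\Z^g$ (bounded in terms of Hermite's constant $\gamma_g$), which intersects $\Z^g$ in a finite set. Since a perfect form is determined up to positive scalar by the finite combinatorial datum $M(Q)$, the number of $\GL_g(\Z)$-equivalence classes of perfect forms is finite, and every cone of $\Per$ is by construction a face of some maximal perfect cone. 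This last step, and specifically the quantitative control afforded by reduction theory, is what I expect to be the most delicate.
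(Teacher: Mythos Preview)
The paper does not give a proof of Fact~\ref{F:main-Per}: it is stated as a classical result of Voronoi and attributed to~\cite{Vor}, with a pointer to \cite[Sec.~(8.8)]{NamT} for an exposition. There is thus no argument in the paper to compare yours against.

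That said, your outline is broadly the standard one (Ryshkov polyhedron and its normal fan), but there is a genuine gap in your treatment of axiom~(v). From the deformation $Q_0\rightsquigarrow Q^*$ you correctly obtain $M(Q_0)\subseteq M(Q^*)$ and hence $\sigma[Q_0]\subseteq\sigma[Q^*]$; however, this does \emph{not} give $Q_0\in\sigma[Q^*]$. The cone $\sigma[Q_0]$ is the normal cone to the face of the Ryshkov polyhedron through (the normalisation of) $Q_0$, and there is no reason for $Q_0$ itself to lie in that normal cone. For instance, if $M(Q_0)=\{\pm e_1\}$ then $\sigma[Q_0]=\R_{\ge 0}\cdot e_1e_1^t$, which certainly does not contain a generic positive definite $Q_0$ with that minimal-vector set. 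Your phrase ``combined with the duality above'' does not bridge this.

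The classical route is instead Voronoi's algorithm: start from \emph{any} perfect form $Q^*_1$; if $Q_0\notin\sigma[Q^*_1]$, there is a facet of $\sigma[Q^*_1]$ separating $Q_0$, and across it lies a contiguous perfect cone $\sigma[Q^*_2]$. One shows that $\langle Q_0,Q^*_i\rangle$ (with the $Q^*_i$ normalised to $\mu=1$) strictly decreases along this walk, and termination follows from the local finiteness of the vertex set of the Ryshkov polyhedron---precisely the reduction-theoretic input you correctly isolated for axiom~(iv). So (iv) and (v) are genuinely intertwined, and the deformation argument you propose does not, as written, establish~(v).
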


The quadratic forms $Q$ such that $\sigma[Q]$ has maximal dimension $\binom{g+1}{2}$ are called \emph{perfect},
hence the name of this admissible decomposition. The interested reader is referred to  \cite{Mar} for more details on perfect forms.

%\begin{remark}\label{R:Per-simpli}
%\noindent \begin{enumerate}[(i)]
%\item The cones $\sigma[Q] \in \Per$ need not be simplicial for $g\geq 4$ (see \cite[p. 93]{NamT}).
%\item It follows easily from the definition that the extremal rays of
%the cones $\tau\in \Per$ are generated by quadratic forms of rank one.
%Moreover, it is easily checked that the cone $\langle Q\rangle$ generated by any rank-1 quadratic forms $Q\in \Ort$ belongs to $\Sigma_P$. In particular, from the properties of an admissible
%decomposition (see Definition \ref{decompo}), it follows that if $Q\in \Ort$ is a rank-1 quadratic form belonging to a cone $\tau\in \Sigma_P$, then $\langle Q\rangle$ is an extremal ray of $\tau$.
%\end{enumerate}
%\end{remark}

\begin{example}\label{E:Per-g2}
 Let us compute $\Per$ in the case $g=2$ (compare with Figure \ref{VorFig}).
Let
$R_{12} = \left(\begin{matrix}1&-1\\-1&1\end{matrix}\right)$,
$R_{13} = \left(\begin{matrix}1&0\\0&0\end{matrix}\right)$,
$R_{23} = \left(\begin{matrix}0&0\\0&1\end{matrix}\right).$
Then, up to $\GL_g(\Z)$-equivalence, an easy computation shows that the unique non-zero cones in $\Per$ are
\begin{align*}
 &\sigma\left[\left(\begin{matrix}1&1/2\\1/2&1\end{matrix}\right)\right] = \R_{\geq 0} \langle R_{12}, R_{13}, R_{23} \rangle=\left\{\left(\begin{matrix}a+c&-c\\-c&b+c\end{matrix}\right) \: : \: a, b, c \geq 0\right\} , \\
 &\sigma\left[\left(\begin{matrix}1&\lambda\\\lambda&1\end{matrix}\right)\right] = \R_{\geq 0} \langle R_{13}, R_{23} \rangle=\left\{\left(\begin{matrix}a&0\\0&b\end{matrix}\right) \: : \: a, b \geq 0\right\} \text{ for any } -1/2<\lambda <1/2, \\
 &\sigma\left[\left(\begin{matrix}1&\lambda\\\lambda&\mu\end{matrix}\right)\right] = \R_{\geq 0} \langle R_{13} \rangle=
 \left\{\left(\begin{matrix}a&0\\0&0\end{matrix}\right) \: : \: a \geq 0\right\} \text{ for any } \mu>\max\{1, \lambda^2, \pm 2\lambda\}. \\
 %&\sigma\left[\left(\begin{matrix}\nu&\lambda\\\lambda&\mu\end{matrix}\right)\right] = \{0\} \text{ for any } \mu, \nu>1, %\mu\nu>\lambda^2, \mu+\nu>1\pm 2\lambda.
\end{align*}
%The above cones are the unique cones in $\Per$ up to $\GL_g(\Z)$-equivalence.
%Note that each closed cone $\overline{\sigma_{D_{i+1}}}$ is a face of $\overline{\sigma_{D_i}}$ for $i=1,2,3$.

\end{example}

\subsubsection{The 2nd Voronoi decomposition $\Voro$}\label{S:Voro}
\indent

In this subsection, we review the definition and main properties of the 2nd Voronoi admissible decomposition
(see \cite{Vor}, \cite[Chap. 9(A)]{NamT} or \cite[Chap. 2]{Val} for more details and proofs).

The Voronoi decomposition is based on the Delone subdivision $\Del(Q)$ associated to a quadratic form $Q\in \Ort$.

\begin{defi}
  Given $Q \in \Ort$, consider the map $l_Q : \Z^g \to \Z^g \times \R$ sending $x \in \Z^g$ to $(x,Q(x))$. View the image of $l_Q$ as an infinite set of points in $\R^{g+1}$, one above each point in $\Z^g$, and consider the convex hull of these points. The lower faces of the convex hull
  % (the faces that are visible from $(0,-\infty)$)
 can now be projected to $\R^g$ by the map $\pi:\R^{g+1}\to \R^g$ that forgets the last coordinate. This produces an infinite $\Z^g$-periodic polyhedral subdivision of $\R^g$, called the {\em Delone subdivision} of $Q$ and denoted $\Del(Q)$.
\end{defi}

It can be checked that if $Q$ has rank $g'$ with $0\leq g'\leq g$ then $\Del(Q)$ is a subdivision consisting of polyhedra
such that the maximal linear subspace contained in them has dimension $g-g'$. In particular, $Q$ is positive definite
if and only if $\Del(Q)$ is made of polytopes, i.e. bounded polyhedra.

Now, we group together quadratic forms in $\Ort$ according to the Delone subdivisions that they yield.

\begin{defi}\label{D:open-secondary}
  Given a Delone subdivision $D$ (induced by some $Q_0\in \Ort$), let
  \[ \sigma_D^0 = \{ Q \in \Ort : \Del(Q) = D \}. \]
\end{defi}

It can be checked  that the set $\sigma_D^0$ is a relatively open (i.e. open in its linear span) rational polyhedral cone in $\Ort$.
Let $\sigma_D$ denote the Euclidean closure of $\sigma_D^0$ in $\R^{\binom{g+1}{2}}$, so $\sigma_D$ is a closed rational polyhedral cone and $\sigma_D^0$ is its relative interior. We call $\sigma_D$ the {\em secondary cone} of $D$.

Now, the action of the group $GL_g(\Z)$ on $\R^g$ induces an action of $GL_g(\Z)$ on the set of Delone subdivisions: given a Delone subdivision $D$ and an element $h\in \GL_g(\Z)$, denote by $h\cdot D$ the Delone subdivision given by the action of $h$ on $D$. Moreover, $\GL_g(\Z)$ acts naturally on the set
of secondary cones $\{\sigma_D:D \text{ is a Delone subdivision of } \R^g\}$ in such a way that
$$h\cdot \sigma_D:=\{hQh^t \: : \: Q\in \sigma_D \}=\sigma_{h\cdot D}.$$
%\begin{figure}%
%\includegraphics[height=3in]{s2_1}%
%\caption{Infinite decomposition of $\widetilde{S}^2_{\geq 0}$ into secondary cones.}%
%\label{f:s2}%
%\end{figure}
Another of the main results of \cite{Vor} is the following

\begin{fact}[Voronoi]
\label{F:main-Vor}
The set of secondary cones
$$\Voro:= \{ \sigma_D : D \text{ is a Delone subdivision of } \R^g \} $$
yields an admissible decomposition of $\Ort$, known as the {\em second Voronoi decomposition}.
\end{fact}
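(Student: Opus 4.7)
The plan is to verify the five conditions of Definition \ref{decompo} for $\Voro$ by exploiting an order-reversing correspondence between the poset of $\Z^g$-periodic Delone subdivisions of $\R^g$ under refinement and the face poset of $\Voro$. The key structural lemma underpinning the proof is: \emph{for any Delone subdivisions $D, D'$ of $\R^g$, the cone $\sigma_{D'}$ is a face of $\sigma_D$ if and only if $D'$ is a coarsening of $D$, and every face of $\sigma_D$ arises this way.} To establish this lemma I would first observe that for a fixed $D$, the condition $\Del(Q)=D$ can be written as finitely many strict linear inequalities in $Q$: for each cell $P$ of $D$ with vertex set $S\subset \Z^g$, and each lattice point $x\notin S$ lying sufficiently close to $P$, one requires that $(x,Q(x))$ lie strictly above the affine hyperplane in $\R^{g+1}$ through $\{(s,Q(s)):s\in S\}$. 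Because $Q$ grows quadratically while hyperplane heights grow linearly, all but finitely many lattice points satisfy these inequalities automatically, and $\Z^g$-periodicity of $D$ reduces to finitely many cells modulo translation; hence $\sigma_D^0$ is a relatively open rational polyhedral cone with rational polyhedral closure $\sigma_D$. Faces of $\sigma_D$ arise by turning some strict inequalities into equalities, which geometrically forces certain lattice points onto common lower facets and thereby merges cells of $D$ into a coarser Delone subdivision.

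Granting the lemma, condition (i) is immediate. For (ii), given $Q$ in the relative interior of $\sigma_{D_1}\cap\sigma_{D_2}$, the subdivision $D'':=\Del(Q)$ is a common coarsening of both $D_1$ and $D_2$, and the linear-inequality description identifies $\sigma_{D_1}\cap\sigma_{D_2}$ with $\sigma_{D''}$, which is a face of each by the lemma. Condition (iii) follows from the identity $\Del(hQh^t)=h^{-t}\cdot \Del(Q)$, an immediate consequence of the change-of-variables $y=h^t x$ applied inside the definition of $l_Q$; hence $h\cdot \sigma_D\cdot h^t = \sigma_{h^{-t}\cdot D}\in \Voro$. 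Condition (v) follows because for any $Q\in \Ort$ the rationality of $\Null(Q)$ (Remark~\ref{rat-qua}) makes $\Del(Q)$ a well-defined $\Z^g$-periodic subdivision whose cells have lineality space $\Null(Q)$, so $Q\in \sigma_{\Del(Q)}^0\subset \sigma_{\Del(Q)}\in \Voro$.

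The main obstacle is condition (iv), finiteness of $\GL_g(\Z)$-orbits of cones. The plan is to run a reduction-theory argument analogous to that of Lemma~\ref{L:loc-fin}. Fix a compact cross-section $K\subset \O$ of a Siegel fundamental domain for the $\GL_g(\Z)$-action on $\O$, normalized by fixing the value of some positive linear functional on $Q$. For $Q\in K$, classical Minkowski-type bounds on reduced positive definite quadratic forms supply a uniform constant $c=c(K)$ such that all vertices of Delone cells of $\Del(Q)$ incident to the origin lie in the finite set $\{x\in \Z^g:\|x\|\leq c\}$. Since $\Del(Q)$ is $\Z^g$-periodic, its full combinatorial type is determined by the cells at the origin, so only finitely many combinatorial types of $\Del(Q)$ occur as $Q$ ranges over $K$. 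Thus only finitely many cones of $\Voro$ meet $K$, and $\GL_g(\Z)$-equivariance together with the fact that $\GL_g(\Z)\cdot K$ covers $\O$ modulo scaling bounds the number of $\GL_g(\Z)$-orbits among cones in $\Voro$ meeting $\O$. The extension to the boundary $\Ort\setminus\O$ is handled by stratifying by rank and inducting on $g$: by Remark~\ref{rat-qua} any rank-$g'$ form in $\Ort$ is $\GL_g(\Z)$-conjugate to a block form whose $g'\times g'$ block is positive definite in $\Omega_{g'}$, and the Delone subdivision of such a block form is the product of a $g'$-dimensional Delone subdivision with the constant subdivision of the $(g-g')$-dimensional kernel; the inductive finiteness on $\Omega_{g'}$ then bounds the number of orbits in each stratum, completing the verification.
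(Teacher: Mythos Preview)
The paper does not give a proof of this statement: it is recorded as a \emph{Fact} attributed to Voronoi, with references to \cite{Vor}, \cite[Chap.~9(A)]{NamT}, and \cite[Chap.~2]{Val} for details. So there is nothing in the paper to compare your argument against; what you have written is essentially a sketch of the classical proof found in those references.

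Your outline is along the correct lines. One small simplification for condition~(iv): once you know that only finitely many cones of $\Voro$ meet a compact cross-section $K$ of a Siegel domain in $\O$, you do not need a separate inductive argument on the boundary strata. Every cone $\sigma_D$ is a face of some maximal-dimensional cone $\sigma_{D'}$ (take $D'$ any Delone triangulation refining $D$), and maximal cones necessarily meet $\O$; since each cone has only finitely many faces, finiteness of $\GL_g(\Z)$-orbits of maximal cones already bounds the total number of orbits. This avoids the rank-stratification step you sketch at the end.
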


The cones of $\Voro$ having maximal dimension $\binom{g+1}{2}$ are those of the form $\sigma_D$ for $D$ a Delone subdivision which is a triangulation, i.e. such that $D$ consists only of simplices (see \cite[Sec. 2.4]{Val}).
We refer the reader to \cite{MV} for a comparison between the 2nd Voronoi decomposition $\Sigma_V$ and
the perfect decomposition $\Sigma_P$.

\begin{example}
\label{e:Atr2}
  Let us compute $\Voro$ in the case $g=2$ (compare with Figure \ref{VorFig} and with Example \ref{E:Per-g2}).  Combining the taxonomies in \cite[Sec. 4.1, Sec. 4.2]{Val}, we may choose four representatives $D_1, D_2, D_3, D_4$ for $\GL_g(\Z)$-orbits of Delone subdivisions as in Figure \ref{f:subdiv}, where we have depicted the part of the Delone subdivision that fits inside the unit cube in $\R^2$.

\begin{figure}[h]%
\includegraphics[width=3in]{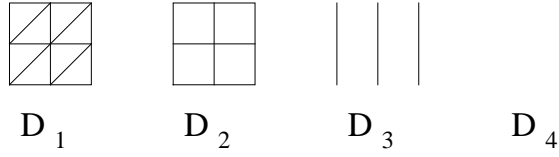}%
\caption{Delone subdivisions for $g=2$ (up to $\GL_g(\Z)$-equivalence).}%
\label{f:subdiv}%
\end{figure}
\noindent We can describe the corresponding secondary cones as follows. Let
$R_{12} = \left(\begin{matrix}1&-1\\-1&1\end{matrix}\right)$,
$R_{13} = \left(\begin{matrix}1&0\\0&0\end{matrix}\right)$,
$R_{23} = \left(\begin{matrix}0&0\\0&1\end{matrix}\right)$ as in Example \ref{E:Per-g2}.
Then
\begin{align*}
 \sigma_{D_1} &= \R_{\geq 0} \langle R_{12}, R_{13}, R_{23} \rangle=\left\{\left(\begin{matrix}a+c&-c\\-c&b+c\end{matrix}\right) \: : \: a, b, c \geq 0\right\} , \\
 \sigma_{D_2} &= \R_{\geq 0} \langle R_{13}, R_{23} \rangle=\left\{\left(\begin{matrix}a&0\\0&b\end{matrix}\right) \: : \: a, b \geq 0\right\} , \\
 \sigma_{D_3} &= \R_{\geq 0} \langle R_{13} \rangle=
 \left\{\left(\begin{matrix}a&0\\0&0\end{matrix}\right) \: : \: a \geq 0\right\} , \\
 \sigma_{D_4} &= \{0\}.
\end{align*}
%Note that each cone $\sigma_{D_{i+1}}$ is a face of $\sigma_{D_i}$ for $i=1,2,3$.
\end{example}

\subsection{Tropical Siegel space}\label{S:tropSiegel}

The aim of this subsection is to introduce the tropical Siegel space, which parametrizes marked tropical abelian
varieties, whose definition we now introduce.

\begin{defi}\label{D:trop-abvar}
\noindent
\begin{enumerate}[(i)]
\item \label{D:trop-abvar1} A \emph{tropical p.p.} (= principally polarized) \emph{abelian variety} $A$ of dimension $g$ is a pair $(V/\Lambda, Q)$ consisting of a $g$-dimensional real torus $V/\Lambda$ (so that $V$ is a $g$-dimensional real vector space and $\Lambda\subset V$ is a full-dimensional lattice)  and  $Q$ is a positive semi-definite quadratic form on $V$ such that the null space $\Null(Q)$ of $Q$ is defined over $\Lambda\otimes \Q$, i.e. it admits a basis with elements in $\Lambda\otimes \Q$.

A tropical p.p.~abelian variety $A=(V/\Lambda,Q)$ is said to be {\em pure} if $Q$ is positive definite.

%Two tropical p.p.~abelian varieties
%$A=(\R^g/\Lambda,Q)$ and $A'=(\R^g/\Lambda',Q')$ are isomorphic if there exists
%$h\in GL(g,\R)$ such that $h(\Lambda)=\Lambda'$ and $hQh^t=Q'$.
%The rank of $A$ is to the dimension of $\Null(Q)$.
%A \emph{tropical abelian variety} is a tropical abelian variety $(\bbR^g/\Lambda, Q)$
%such that $\Null(Q)=0$, i.e. such that $Q$ is positive definite.
\item \label{D:trop-abvar2} A {\em marking} on a p.p.~abelian variety $A=(V/\Lambda,Q)$ is an isomorphism
$\phi:\R^g/\Z^g\stackrel{\cong}{\to} V/\Lambda$ of real tori, or equivalently a linear isomorphism from $\R^g$ onto $V$ sending $\Z^g$ isomorphically onto $\Lambda$.

We say that $(A,\phi)=(V/\Lambda,Q,\phi)$ is a {\em marked tropical p.p.~abelian variety}.

\end{enumerate}
\end{defi}

The above definition of tropical p.p.~abelian varieties is due to \cite{BMV}, generalizing slightly the definition of  \cite{mz}, where only pure tropical p.p.~abelian varieties are considered.

Indeed, marked tropical p.p.~abelian varieties up to isomorphism are the same thing as positive semi-definite quadratic forms, %and tropical p.p.~abelian varieties up to isomorphism are the same thing as positive semi-definite quadratic forms up to arithmetic equivalence,
as observed in the following

\begin{remark}\label{R:equiv-abvar}
\noindent
\begin{enumerate}[(i)]
\item \label{R:equiv-abvar1} Every marked tropical p.p.~abelian variety $(V/\Lambda, Q, \phi)$ is uniquely determined by the quadratic form $\phi^*(Q)$ on $\R^g$ obtained by pulling back the quadratic form $Q$ on $V$ via the marking $\phi$.

%This follows easily from the fact that for a rank-$g$ lattice $\Lambda\subset \R^g$ there exists a unique element of $\GL_g(\R)$ sending $\Lambda$ into $\Z^g$.

\item \label{R:equiv-abvar2} Every tropical p.p.~abelian variety $A=(V/\Lambda,Q)$ is isomorphic to a tropical p.p.~abelian variety of the form $(\R^g/\Z^g,Q')$.
%In fact, it is enough to consider $Q'=hQh^t$, where $h\in \GL_g(\R)$ is such that $h(\Lambda)=\Z^g$.
Moreover, we have that $(\R^g/\Z^g,Q)\cong (\R^g/\Z^g,Q')$ if and
only if there exists $h\in \GL_g(\Z)$ such that $Q'=h Q h^t$, i.e. if and only if
$Q$ and $Q'$ are arithmetically equivalent.
\end{enumerate}
%Therefore, from now on we will always consider our tropical abelian varieties
%in the form $A=(\bbR^g/\bbZ^g,Q)$, where $Q$ is uniquely defined up to arithmetic equivalence.
\end{remark}

Given any admissible decomposition $\Sigma$ of $\Ort$, we consider the ideal stacky fan associated to
the fan $\Sigma$, which we view as the tropical analogue of the classical Siegel space.

\begin{defi}\label{D:trop-Sieg}
Let $\Sigma$ be an admissible decomposition of $\Ort$. We denote by $\Hg$ (resp.~$\Hgp$) the stacky fan (resp.~ideal stacky fan)
associated to the fan $\Sigma$ (resp.~the ideal fan $\Sigma_{|\O}$) according to Remark \ref{r:asso-fan}
and we call it the {\em tropical Siegel space}
(resp.~the {\em pure tropical Siegel space}) associated with $\Sigma$.
\end{defi}

Any (pure) tropical Siegel space parametrizes marked (pure) tropical p.p.~abelian varieties, as shown in the following

\begin{prop}\label{P:Siegel}
Fix an admissible decomposition $\Sigma$ of $\Ort$.
\begin{enumerate}[(i)]
\item \label{P:Siegel1} The tropical Siegel space  $\Hg$ (resp.~the pure tropical Siegel space $\Hgp$) is a stacky fan
(resp.~an ideal stacky fan) parametrizing marked tropical p.p.~abelian varieties (resp.~marked pure
tropical p.p.~abelian varieties) of dimension~$g$.
\item \label{P:Siegel2} The map
\begin{equation}\label{E:Siegel-quadra}
\begin{aligned}
\Phi: \Hg & \longrightarrow  \Ort \\
(V/\Lambda,Q,\phi)& \mapsto \phi^*(Q)
\end{aligned}
\end{equation}
is a continuous bijection which restricts to a homeomorphism between $\Hgp$ and $\O$.
\end{enumerate}
\end{prop}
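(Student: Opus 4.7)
The plan is to reduce both parts to material that has already been set up: the ideal stacky fan construction from a (possibly infinite) fan, the identification of marked tropical p.p.~abelian varieties with elements of $\Ort$, and the local finiteness result for $\Sigma_{|\O}$.

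For part (i), I would first invoke Remark \ref{r:asso-fan} directly. Since $\Sigma$ is a rational polyhedral fan and $\Sigma_{|\O}$ is an ideal fan (both by Proposition~\ref{P:comp-admdec}), the associated spaces $\Hg = X_\Sigma$ and $\Hgp = X_{\Sigma_{|\O}}$ are automatically a stacky fan and an ideal stacky fan, respectively, with cells $\{\sigma^0_\mu\}$. The lattice-preserving maps needed in Definition \ref{d:sf} are the face inclusions $\sigma_\mu \hookrightarrow \sigma_\nu$ coming from the fan structure; these preserve the lattice $\Z^{\binom{g+1}{2}} \subset \R^{\binom{g+1}{2}}$ since all cones sit inside the same ambient space. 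For the modular interpretation, I would use Remark~\ref{R:equiv-abvar}\eqref{R:equiv-abvar1}: each marked tropical p.p.~abelian variety $(V/\Lambda,Q,\phi)$ is uniquely determined by $\phi^*(Q) \in \Ort$, and it is pure iff $\phi^*(Q) \in \O$. Then the fact that the open cones $\sigma^0_\mu$ partition $\Ort$ (a defining property of fans) identifies $\coprod \sigma^0_\mu$ set-theoretically with $\Ort$, yielding the desired parametrization in both the general and pure cases.

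For part (ii), I would build $\Phi$ one cone at a time: for each $\sigma_\mu \in \Sigma$, the inclusion $\sigma_\mu \hookrightarrow \Ort$ is continuous, and since the gluings defining $\Hg$ only identify points that are literally equal as elements of $\Ort$, these cone-wise inclusions descend to a well-defined continuous map $\Phi\colon \Hg \to \Ort$ by the universal property of the quotient topology. Surjectivity of $\Phi$ follows from $\bigcup_{\sigma_\mu \in \Sigma} \sigma_\mu = \Ort$, and injectivity follows from the fact that in a fan the relative interiors of the cones are pairwise disjoint, so each point of $\Ort$ is hit by exactly one open cell of $\Hg$. Thus $\Phi$ is a continuous bijection.

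For the homeomorphism claim on the pure part, I would combine Lemma~\ref{L:loc-fin}, which says $\Sigma_{|\O}$ is locally finite, with Lemma~\ref{L:locally-finite}, which says precisely that the natural map $X_{\Sigma_{|\O}} \to \supp(\Sigma_{|\O}) = \O$ is a homeomorphism when the ideal fan is locally finite. Since $\Hgp = X_{\Sigma_{|\O}}$ by construction, this gives $\Phi|_{\Hgp}\colon \Hgp \xrightarrow{\sim} \O$. There is no serious obstacle in this proof — the key non-triviality has been offloaded to Lemma~\ref{L:loc-fin}, whose proof uses the finiteness of Siegel sets. The only point that deserves emphasis is \emph{why} the argument does not extend to all of $\Hg$: the fan $\Sigma$ itself is typically not locally finite at points of $\Ort \setminus \O$, so one cannot upgrade the continuous bijection to a homeomorphism there, consistent with the remarks following the statement.
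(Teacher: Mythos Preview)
Your proposal is correct and follows essentially the same route as the paper: continuity and bijectivity of $\Phi$ via the cone-wise inclusions and Remark~\ref{R:equiv-abvar}\eqref{R:equiv-abvar1}, and the homeomorphism on the pure part via Lemma~\ref{L:loc-fin} combined with Lemma~\ref{L:locally-finite}. The paper's own proof is terser and does not spell out part~(i) separately (taking it as built into Definition~\ref{D:trop-Sieg}), but your added detail there is accurate and does not deviate from the intended argument.
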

\begin{proof}
The map $\Phi$ is continuous by the proof of Lemma \ref{L:locally-finite} and bijective by Remark \ref{R:equiv-abvar}\eqref{R:equiv-abvar1}. Since $\Sigma_{|\O}$ is a locally finite ideal fan by Lemma \ref{L:loc-fin},
Lemma \ref{L:locally-finite} implies that the restriction of $\Phi$ to $\Hgp$ induces a homeomorphism between $\Hgp$ and $\Phi(\Hgp)=\O$.
\end{proof}

\subsection{Moduli space of tropical abelian varieties}\label{S:trop-abvar}

The aim of this subsection is to introduce the moduli space of tropical p.p.~abelian varieties of fixed dimension~$g$.

\begin{defi}\label{D:trop-Ag}
Let $\Sigma$ be an admissible decomposition of $\Ort$. We denote by $\Ag$ (resp.~$\Agp$) the stacky fan (resp.~ideal stacky fan)
obtained as the stratified quotient of $\Ort$ (resp.~$\O$) with respect to the $\GL_g(\Z)$-admissible decomposition
$\Sigma$ (resp.~$\Sigma_{|\O}$) as in Definition \ref{d:sq}.
\end{defi}

Consider the action of $\GL_g(\Z)$ on $\Hg$ given by changing the markings. More precisely, an
element $h\in \GL_g(\Z)$ acts on $\Hg$ by sending $(A,\phi)\in \Hg$ into $(A, \phi\circ \ov{h})$ where $\ov h:\R^g/\Z^g\stackrel{\cong}{\to} \R^g/\Z^g$ is the isomorphism induced by the linear map $h$.
Clearly the ideal stacky subfan $\Hgp\subseteq \Hg$ is preserved by the action of $\GL_g(\Z)$.
Observe that the above defined action of $\GL_g(\Z)$ on $\Hg$ makes the map $\Phi$ of \eqref{E:Siegel-quadra} equivariant with respect to the natural action of $\GL_g(\Z)$ on $\Ort$ (see \S\ref{S:adm-deco}).

\begin{lemma}\label{L:action-GL}
Fix an admissible decomposition $\Sigma$ of $\Ort$. The action of $\GL_g(\Z)$ on the stacky fan $\Hg$ (resp. on the ideal stacky fan $\Hgp$) defined above is admissible.
\end{lemma}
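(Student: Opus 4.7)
The plan is to unpack the definitions and reduce everything to Proposition \ref{P:comp-admdec}. Recall from Definition \ref{D:trop-Sieg} that $\Hg$ is built from the cones $\{\sigma_\mu\}_{\sigma_\mu \in \Sigma}$ glued along the lattice-preserving inclusions of faces coming from the fan structure of $\Sigma$; similarly for $\Hgp$ using $\Sigma_{|\O}$. To check admissibility in the sense of Definition \ref{d:admissible-action}, I need to exhibit, for each $h \in \GL_g(\Z)$ and each cone $\sigma_\mu$ of $\Hg$, a cone $\sigma_{\mu'}$ of $\Hg$ and a lattice-preserving isomorphism $L_{h,\mu} \colon \sigma_\mu \stackrel{\cong}{\to} \sigma_{\mu'}$ fitting in a commutative square with the action on $\Hg$.

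First I would define the candidate map. Given $\sigma_\mu \in \Sigma$, set $\sigma_{\mu'} := h \cdot \sigma_\mu \cdot h^t$, which lies in $\Sigma$ by Definition \ref{decompo}(iii), and let $L_{h,\mu} \colon \sigma_\mu \to \sigma_{\mu'}$ be the map $Q \mapsto hQh^t$. This is precisely the map already analyzed in the proof of Proposition \ref{P:comp-admdec}, where it was shown to be a linear isomorphism which preserves the integer lattice (since $hAh^t$ is integral iff $A$ is, because $h \in \GL_g(\Z)$). In particular, $L_{h,\mu}$ is a lattice-preserving isomorphism between stacky cones of $\Hg$.

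Next I would verify that the required diagram commutes. By Remark \ref{R:equiv-abvar}\eqref{R:equiv-abvar1} together with Proposition \ref{P:Siegel}\eqref{P:Siegel2}, the map $\Phi \colon \Hg \to \Ort$ is a bijection sending the cone $\sigma_\mu$ of $\Hg$ identically onto the cone $\sigma_\mu$ of $\Sigma$. Moreover, by construction $\Phi$ intertwines the marking-change action of $\GL_g(\Z)$ on $\Hg$ with the standard action $Q \mapsto hQh^t$ on $\Ort$: explicitly, if $(V/\Lambda, Q, \phi)$ has $\Phi$-image $\phi^*(Q)$, then $(V/\Lambda, Q, \phi \circ \ov h)$ has $\Phi$-image $(\phi \circ \ov h)^*(Q) = h \cdot \phi^*(Q) \cdot h^t$. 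Hence the action of $h$ on $\Hg$ sends the image of $\sigma_\mu$ setwise to the image of $\sigma_{\mu'}$, and restricts on cones to $L_{h,\mu}$, giving the commutativity required by Definition \ref{d:admissible-action}.

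Finally, for $\Hgp$, the argument is identical once one notes that the action of $\GL_g(\Z)$ on $\Ort$ preserves the subset $\O$ of positive definite forms (by Remark \ref{rat-qua}), so $\Sigma_{|\O}$ is also $\GL_g(\Z)$-invariant; hence if $\sigma_\mu$ is a cone of $\Hgp$, then so is $\sigma_{\mu'} = h \cdot \sigma_\mu \cdot h^t$, and the same $L_{h,\mu}$ works. There is no real obstacle here: the content is entirely supplied by Definition \ref{decompo}(iii) and Proposition \ref{P:comp-admdec}, and the only thing to be careful about is matching the abstract gluing definition of $\Hg$ (and $\Hgp$) with the marking-change action, which is handled by the equivariance of $\Phi$.
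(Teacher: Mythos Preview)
Your proof is correct and follows essentially the same route as the paper. The paper's one-line proof simply invokes Proposition~\ref{P:comp-admdec} together with Remark~\ref{r:spec-cases} (an admissible decomposition in the sense of Definition~\ref{strat-defi} automatically yields an admissible action on the associated stacky fan), while you unpack this implication explicitly via the equivariance of $\Phi$; the content is the same.
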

\begin{proof}
This follows by combining Proposition \ref{P:comp-admdec} and Remark \ref{r:spec-cases}.
\end{proof}

We can now prove that the space $\Ag$ (resp.~$\Agp$) is a moduli space for tropical p.p.~abelian varieties (resp.~pure tropical p.p. abelian varieties) and it is a quotient of the tropical Siegel space $\Hg$ (resp.~the pure tropical Siegel space $\Hgp$) by the group $\GL_g(\Z)$.

\begin{prop}\label{P:Siegel-Ag}
Fix an admissible decomposition $\Sigma$ of $\Ort$.
\begin{enumerate}[(i)]
\item \label{P:Siegel-Ag1} $\Ag$ (resp.~$\Agp$) is a stacky fan (resp.~ideal stacky fan) parametrizing tropical p.p.~abelian varieties (resp.~pure tropical p.p.~abelian varieties) of dimension $g$.
\item  \label{P:Siegel-Ag2} There is a morphism of stacky fans $\Hg\to \Ag$ (resp.~of ideal stacky fans $\Hgp\to \Agp$) which realizes $\Ag$
(resp.~$\Agp$) as the stratified quotient, hence global quotient, of $\Hg$ (resp.~$\Hgp$) by the group $\GL_g(\Z)$.

\item \label{P:Siegel-Ag3} $\Agp$ is homeomorphic to the quotient of $\O$ by the group $\GL_g(\Z)$.
\end{enumerate}
\end{prop}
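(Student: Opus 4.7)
The proof will proceed through the three parts of the proposition in order, exploiting the machinery of stratified quotients built earlier in the paper together with Proposition \ref{P:Siegel}.

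For part \eqref{P:Siegel-Ag1}, I would start from Definition \ref{D:trop-Ag}: $\Ag$ and $\Agp$ are defined as the stratified quotients of the $\GL_g(\Z)$-admissible decompositions $\Sigma$ and $\Sigma_{|\O}$ of $\Ort$ and $\O$, respectively, whose admissibility is precisely Proposition \ref{P:comp-admdec}. Since stratified quotients by admissible decompositions produce (ideal) stacky fans (apply Proposition \ref{p:isstackyfan} via Remark \ref{r:spec-cases}), the (ideal) stacky fan structure is immediate. For the moduli interpretation, the underlying set of $\Ag$ is $\Ort/\GL_g(\Z)$ by Proposition \ref{p:strat-is-global}, and Remark \ref{R:equiv-abvar}\eqref{R:equiv-abvar2} identifies this set bijectively with isomorphism classes of tropical p.p.\ abelian varieties of dimension $g$; the analogous identification of $\Agp$ with isomorphism classes of pure tropical p.p.\ abelian varieties is obtained by restricting to the positive definite locus.

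For part \eqref{P:Siegel-Ag2}, the action of $\GL_g(\Z)$ on $\Hg$ (and on $\Hgp$) is admissible by Lemma \ref{L:action-GL}, so Proposition \ref{p:isstackyfan} supplies a canonical morphism of ideal stacky fans
\[ \Hg \longrightarrow \Hg /\!/\, \GL_g(\Z) \]
(and the same for $\Hgp$). Here I need to check that this stratified quotient is canonically isomorphic, as an ideal stacky fan, to $\Ag$. This follows from the construction: both stratified quotients are obtained by choosing one cone from each $\GL_g(\Z)$-orbit of cones in $\Sigma$ and gluing along the lattice-preserving maps induced either by face inclusions within $\Sigma$ or by the $\GL_g(\Z)$-action, which is exactly Definition \ref{d:sq2} applied in both settings. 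Finally, Proposition \ref{p:strat-is-global} identifies the stratified quotient with the topological (global) quotient.

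For part \eqref{P:Siegel-Ag3}, the strategy is to transport the homeomorphism $\Phi\colon \Hgp \stackrel{\cong}{\longrightarrow} \O$ of Proposition \ref{P:Siegel}\eqref{P:Siegel2} through the respective quotient maps. The map $\Phi$ is $\GL_g(\Z)$-equivariant by construction of the action on $\Hgp$ (changing the marking $\phi$ by $\bar h$ pulls back $Q$ to $h \cdot \phi^*(Q) \cdot h^t$), so it induces a continuous bijection
\[ \Hgp/\GL_g(\Z) \longrightarrow \O/\GL_g(\Z). \]
Combining this with the homeomorphism $\Agp \cong \Hgp/\GL_g(\Z)$ from part \eqref{P:Siegel-Ag2} yields a continuous bijection $\Agp \to \O/\GL_g(\Z)$. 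To upgrade it to a homeomorphism, I would verify that its inverse is continuous by noting that $\Phi^{-1}$ is a homeomorphism and that taking $\GL_g(\Z)$-quotients preserves the universal property: the composition $\O \to \Hgp \to \Hgp/\GL_g(\Z) = \Agp$ is continuous and $\GL_g(\Z)$-invariant, hence factors through a continuous map $\O/\GL_g(\Z) \to \Agp$ that inverts the map constructed above.

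The main obstacle is the identification in part \eqref{P:Siegel-Ag2} of the abstract stratified quotient $\Hg /\!/\, \GL_g(\Z)$ with the concrete stratified quotient $\Ag = \Ort /\!/_\Sigma\, \GL_g(\Z)$ at the level of ideal stacky fans; once this matching of gluing data is made transparent, part \eqref{P:Siegel-Ag3} is essentially formal, since the local-finiteness of $\Sigma_{|\O}$ (Lemma \ref{L:loc-fin}) is what allows Lemma \ref{L:locally-finite} to give the needed homeomorphism $\Hgp \cong \O$ in the first place.
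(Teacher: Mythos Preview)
Your proposal is correct and uses exactly the same ingredients as the paper's proof: Lemma~\ref{L:action-GL}, Remark~\ref{r:spec-cases}, Propositions~\ref{p:isstackyfan} and~\ref{p:strat-is-global}, Proposition~\ref{P:Siegel}, and Remark~\ref{R:equiv-abvar}\eqref{R:equiv-abvar2}. The only difference is the order: the paper establishes part~\eqref{P:Siegel-Ag2} first and then reads off part~\eqref{P:Siegel-Ag1} from it (via Proposition~\ref{P:Siegel}\eqref{P:Siegel1} and Remark~\ref{R:equiv-abvar}\eqref{R:equiv-abvar2}), whereas you argue part~\eqref{P:Siegel-Ag1} directly from Definition~\ref{D:trop-Ag} before turning to part~\eqref{P:Siegel-Ag2}; your treatment of part~\eqref{P:Siegel-Ag3} is also somewhat more explicit about the equivariance of $\Phi$ and the construction of the inverse, but the substance is identical.
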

\begin{proof}
The actions of $\GL_g(\Z)$ on the stacky fan $\Hg$ and on the ideal stacky fan $\Hgp$ are
admissible by Lemma \ref{L:action-GL}.
Moreover, the stratified quotient of $\Hg$ (resp.~$\Hgp$) by the group $\GL_g(\Z)$ is clearly isomorphic to the stacky fan $\Ag$ (resp.~to the ideal stacky fan $\Agp$)
again by Remark \ref{r:spec-cases}. Therefore, part \eqref{P:Siegel-Ag2} follows by combining Proposition \ref{p:isstackyfan} and Proposition \ref{p:strat-is-global}.

Part \eqref{P:Siegel-Ag1} follows now from part \eqref{P:Siegel-Ag2} together with Proposition \ref{P:Siegel}\eqref{P:Siegel1} and Remark \ref{R:equiv-abvar}\eqref{R:equiv-abvar2}.

Part \eqref{P:Siegel-Ag3} follows from part \eqref{P:Siegel-Ag2} together with Proposition \ref{P:Siegel}\eqref{P:Siegel2}.

\end{proof}

\section{The tropical period map}\label{S:periodmap}

The aim of this section is to define the tropical period map from the (pure) tropical Teichm\"uller space to the (pure) tropical Siegel space and to show that it descends to the tropical Torelli map studied in \cite{BMV} and \cite{chan}.
The period map will send a marked tropical curve into its marked tropical Jacobian, that we are now going to describe.

\subsection{(Marked) tropical Jacobians}\label{S:trop-Jac}

The tropical Jacobian of a tropical curve was defined in \cite[Sec. 5.1]{BMV}, following the earlier definition of
Mikhalkin-Zharkov in \cite[Sec. 6]{mz} in the case of pure tropical curves.

\begin{defi}\label{D:trop-Jac}
Let $C=(\Gamma,w,l)$ be a tropical curve of genus $g$. The {\em tropical Jacobian} (or simply the Jacobian) of $C$ is the tropical p.p.~abelian variety of dimension $g$
$$J(C):=\left(\frac{H_1(\Gamma,\R)\oplus \R^{|w|}}{H_1(\Gamma,\Z)\oplus \Z^{|w|}}, Q_C\right)
$$
where the quadratic form $Q_C$ is identically zero on $\R^{|w|}$ and it is given on $H_1(\Gamma,\R)$ by
\begin{equation}\label{E:QC}
Q_C\left(\sum_{e\in E(\Gamma)}\alpha_e\cdot e\right)=\sum_{e\in E(\Gamma)}\alpha_e^2 \cdot l(e).
\end{equation}
\end{defi}
\noindent
Note that a tropical curve $C$ is pure (i.e. $w=\un 0$) if and only if its tropical Jacobian $J(C)$ is  pure
(i.e.~$Q_C$ is positive definite).

Corresponding to any marking of a tropical curve is a marking of its Jacobian.

\begin{defi}\label{D:mark-Jac}
Let $(C,h)=(\Gamma,w,l, h)$ be a marked tropical curve of genus $g$. The {\em marked tropical Jacobian} (or simply the marked Jacobian) of $(C,h)$ is the marked tropical p.p.~abelian variety of dimension~$g$
$$J(C,h)=(J(C),\phi_h),$$
where $J(C)$ is the Jacobian of $C$ and
$$\phi_h:\frac{\R^g}{\Z^g} \stackrel{\cong}{\longrightarrow}\frac{H_1(\Gamma,\R)\oplus \R^{|w|}}{H_1(\Gamma,\Z)\oplus \Z^{|w|}}$$
is the marking of $J(C)$ which is induced by the linear isomorphism $$\R^g=H_1(R_g,\R)\xrightarrow[h_*]{\cong} H_1(\Gamma^w,\R)\cong H_1(\Gamma,\R)\oplus \R^{|w|}$$
where the first isomorphism is induced by the marking $h\colon R_g\to \Gamma^w$ and the second isomorphism is induced by the canonical map $\Gamma^w\to \Gamma$ that contracts the virtual loops of $\Gamma^w$ (see Definition \ref{D:virt-graph}).
\end{defi}
It is easy to see that the above defined marking $\phi_h$ on $J(C)$ depends only on the equivalence class of $h$ (see Definition \ref{d:marked-weighted-equ}); therefore, the above definition is well posed. Moreover, it is clear that a marked tropical curve $(C,h)$ is pure if and only if its marked tropical Jacobian $(J(C),\phi_h)$ is pure.

\subsection{The tropical period map}\label{S:per-map}

The tropical period map is defined as it follows.

\begin{lemmadefi}\label{D:periodmap}
The tropical period map is the continuous map
$$\begin{aligned}
{\mathcal P}_g^{tr}: \Tg & \longrightarrow \Ort \\
(C,h)& \mapsto \phi_h^*(Q_C).
\end{aligned}$$
%where, with a slight abuse of notation, we identify $(J(C),\phi_h)$ with the quadratic form $\phi_h^*(Q_C)\in \Ort$
%(see Remark \ref{R:equiv-abvar}\eqref{R:equiv-abvar1}).
\end{lemmadefi}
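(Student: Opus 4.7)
The plan is to verify first that the map is well-defined (lands in $\Ort$ and respects equivalence of marked tropical curves) and then to reduce continuity to a cell-wise computation, invoking the universal property of the quotient topology by which $\Tg$ was defined in Definition \ref{Tg}.

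\textbf{Step 1 (well-definedness).} Fix a representative $(C,h)=(\Gamma,w,l,h)$. The lattice $H_1(\Gamma,\Z)\oplus\Z^{|w|}$ sits inside $H_1(\Gamma,\R)\oplus\R^{|w|}$ as a full-dimensional sublattice, and the null space of $Q_C$ is precisely $\R^{|w|}$, which is defined over this lattice. Hence $(J(C),Q_C)$ is a tropical p.p.~abelian variety in the sense of Definition \ref{D:trop-abvar}\eqref{D:trop-abvar1}. Pulling back along the lattice isomorphism $\phi_h$ of Definition \ref{D:mark-Jac} produces a positive semi-definite quadratic form on $\R^g$ whose null space is defined over $\Z^g\otimes\Q$, so $\phi_h^*(Q_C)\in\Ort$. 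Independence of the equivalence class of marked metric weighted graphs follows because the construction of $\phi_h$ in Definition \ref{D:mark-Jac} already depends only on the equivalence class of $h$ (as noted there), while isometries of $(\Gamma,l)$ preserve $Q_C$ by formula \eqref{E:QC}.

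\textbf{Step 2 (continuity on each closed cell).} By Definition \ref{Tg}, it suffices, via the universal property of the quotient topology, to check that the composite
\[
\coprod_{(\Gamma,w,h)}\ov{\C_{(\Gamma,w,h)}}\longrightarrow \Tg\xrightarrow{\mathcal{P}_g^{tr}}\Ort
\]
is continuous on each summand. Fix a combinatorial type $(\Gamma,w,h)$ and choose a $\Z$-basis of $H_1(\Gamma^w,\Z)\cong H_1(\Gamma,\Z)\oplus\Z^{|w|}$ so that each basis element is expressed as an integral cycle $\sum_{e\in E(\Gamma^w)}\alpha_e^{(i)}e$. The marking $h$ identifies this with the standard basis of $\Z^g$, independently of $l$. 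Then by \eqref{E:QC} and the definition of $\phi_h$, the $(i,j)$-entry of the symmetric matrix $\phi_h^*(Q_C)$ is
\[
\bigl(\phi_h^*(Q_C)\bigr)_{ij}=\sum_{e\in E(\Gamma)}\alpha_e^{(i)}\alpha_e^{(j)}\,l(e),
\]
with virtual loops making no contribution. This is a linear function of $l\in\R_{\ge 0}^{|E(\Gamma)|}$, hence continuous on $\ov{\C_{(\Gamma,w,h)}}$.

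\textbf{Step 3 (compatibility with gluings).} To conclude continuity on $\Tg$, I must verify that the cell-wise maps agree under each gluing $L_\iota\colon \ov{\C_{(\Gamma',w',h')}}\hookrightarrow\ov{\C_{(\Gamma,w,h)}}$ of Lemma \ref{L:spec-ideal-2}. Given a specialization $(\Gamma,w,h)\rightsquigarrow(\Gamma',w',h')$ contracting a subset $S\subseteq E(\Gamma)$, the inclusion $L_\iota$ sends a length function on $E(\Gamma')$ to the length function on $E(\Gamma)$ obtained by assigning $0$ to edges in $S$. One checks separately the two operations making up a specialization: contracting a non-loop edge between distinct vertices (which induces a homotopy equivalence on $\Gamma^w$, so the basis of $H_1$ and thus the formula above are unchanged up to dropping the zero-length contribution), and contracting a loop edge (which converts a non-virtual loop into a virtual loop, increasing $|w|$ by one; the corresponding generator of $H_1$ is preserved, but its length contribution vanishes in both $Q_C$ and $Q_{C'}$ by construction). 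In either case, $\phi_h^*(Q_C)$ and $\phi_{h'}^*(Q_{C'})$ agree on the image of $L_\iota$.

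\textbf{Main obstacle.} The genuinely delicate point is Step 3: the definition of the marking $\phi_h$ involves collapsing the virtual loops of $\Gamma^w$, and one must confirm that this identification is compatible with the basis changes induced by the two types of specialization moves. The key observation is that the virtual loops contribute trivially to $Q$ (since $Q_C\equiv 0$ on $\R^{|w|}$), so that a loop edge shrinking to length zero ``passes through'' the virtual-loop locus continuously. Once this compatibility is checked for each generator of the equivalence relation $\approx$, continuity of $\mathcal{P}_g^{tr}$ follows immediately from the quotient construction.
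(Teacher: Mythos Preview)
Your proof is correct and takes essentially the same approach as the paper: invoke the quotient topology defining $\Tg$ and check that on each closed cell $\ov\C_{(\Gamma,w,h)}$ the map is given by a linear (hence continuous) function of the edge lengths, which is immediate from formula \eqref{E:QC}. The paper's version is much terser, omitting your Steps~1 and~3 entirely; note that your Step~3 is really a well-definedness check (that the cell-wise maps descend to the quotient) rather than a continuity check, since once the map is well-defined on $\Tg$, continuity follows from Step~2 alone via the universal property of the quotient.
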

\begin{proof}
We have to prove that the map ${\mathcal P}_g^{tr}$ is continuous. According to the Definition \ref{Tg} of the tropical Teichm\"uller
space $\Tg$, it is enough to show that the restriction of ${\mathcal P}_g^{tr}$ to the cone $\ov\C_{(\Gamma,w,h)}$, for each stable marked graph $(\Gamma,w,h)$ of genus $g$, is continuous. This follows from the fact that  the quadratic form $Q_C$ on $H_1(\Gamma,\R)$ depends continuously on the lengths $l\in \R_{\geq 0}^{|E(\Gamma)|}$, as is clear from formula \eqref{E:QC}.

\end{proof}

\begin{remark}\label{R:pure-to-pure}
By the observation before Definition \ref{D:trop-Jac}, we have that
$$({\mathcal P}_g^{tr})^{-1}(\O)=\Tgp.$$
\end{remark}

Recall that the tropical Teichm\"uller space $\Tg$ has a natural stacky fan structure (see Proposition \ref{p:tg-is-isf}). On the other hand, the stacky fan structure of $\Ort$ depends on the choice of an admissible decomposition $\Sigma$ of
$\Ort$ (see Definition \ref{D:trop-Sieg}). Some admissible decompositions of $\Ort$ are compatible with the tropical period map
${\mathcal P}_g^{tr}$ in the following sense.

\begin{defi}\label{D:compat-deco}
An admissible decomposition $\Sigma$ of $\Ort$ (see Definition \ref{decompo}) is said to be {\em compatible with the tropical period map} if for each cell $\C^0_{(\Gamma,w,h)}$ of $\Tg$ there exists a cone $\sigma\in \Sigma$ such that
$${\mathcal P}_g^{tr}(\ov\C_{(\Gamma,w,h)})\subseteq \sigma.$$
\end{defi}

Indeed, the two admissible decompositions that we have described in Section \ref{S:exa-deco}, namely the perfect cone
decomposition and the 2nd Voronoi decomposition, are compatible with the tropical period map.

\begin{fact}[Mumford-Namikawa, Alexeev-Brunyate]\label{F:compa-P-V}
The perfect cone decomposition $\Sigma_P$ and the 2nd Voronoi decomposition $\Sigma_V$ are compatible with the tropical period map.
\end{fact}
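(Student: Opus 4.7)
The first step is to make the image $\mathcal{P}_g^{tr}(\overline{\mathcal{C}_{(\Gamma,w,h)}})$ of a closed cell explicit. Fix a $\mathbb{Z}$-basis of $H_1(\Gamma^w,\mathbb{Z}) \cong \mathbb{Z}^g$ which, via the marking $h_\ast$, is identified with the standard basis of $\mathbb{Z}^g = H_1(R_g,\mathbb{Z})$. For each non-virtual edge $e \in E(\Gamma)$, let $\alpha_e \in \mathbb{Z}^g$ be the vector whose $i$-th entry records the signed coefficient of $e$ in the $i$-th basis cycle. Since $Q_C$ vanishes on the $\mathbb{R}^{|w|}$ summand and is given by formula \eqref{E:QC} on $H_1(\Gamma,\mathbb{R})$, a direct computation shows
$$\mathcal{P}_g^{tr}(\Gamma,w,l,h) \;=\; \phi_h^\ast(Q_C) \;=\; \sum_{e \in E(\Gamma)} l(e)\, \alpha_e \alpha_e^t,$$
so that the image of the whole closed cell is the rational polyhedral cone
$$K_{(\Gamma,h)} \;:=\; \mathbb{R}_{\geq 0}\langle \alpha_e \alpha_e^t : e \in E(\Gamma)\rangle \;\subseteq\; \Omega_g^{\mathrm{rt}}.$$
Compatibility of $\Sigma$ with $\mathcal{P}_g^{tr}$ thus reduces to the purely combinatorial statement that each such cone $K_{(\Gamma,h)}$ lies inside a single cone of $\Sigma$.

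For the second Voronoi decomposition $\Sigma_V$, the plan is to prove that the Delone subdivision $\Del(Q)$ is constant as $Q$ ranges over the relative interior of $K_{(\Gamma,h)}$; by Definition~\ref{D:open-secondary} this forces $K_{(\Gamma,h)} \subseteq \sigma_D$ for that common subdivision $D$. This is the tropical incarnation of the classical theorem of Mumford--Namikawa on limits of period matrices along stable degenerations. Concretely, one associates to $(\Gamma,h)$ a candidate subdivision $D_{(\Gamma,h)}$ built from the cut system of the marked graph, and verifies by a direct combinatorial argument that $\Del\bigl(\sum_e l(e)\alpha_e \alpha_e^t\bigr) = D_{(\Gamma,h)}$ for every $l \in \mathbb{R}_{>0}^{E(\Gamma)}$; this is carried out, in our notation, in \cite{BMV}.

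For the perfect cone decomposition $\Sigma_P$, the plan is to exhibit a perfect form $Q_0 \in \Omega_g$ whose minimal vector set $M(Q_0)$ contains every edge vector $\alpha_e$. If such a $Q_0$ exists, then $\sigma[Q_0] = \mathbb{R}_{\geq 0}\langle \xi \xi^t : \xi \in M(Q_0)\rangle$ contains each rank-one generator $\alpha_e \alpha_e^t$ of $K_{(\Gamma,h)}$, and hence contains $K_{(\Gamma,h)}$ itself. This is precisely the content of the theorem of Alexeev--Brunyate, who construct $Q_0$ by perturbing the graphical form $\sum_e \alpha_e \alpha_e^t$ inside an appropriate top-dimensional perfect cone.

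The main obstacle in both parts lies outside the present paper: the non-trivial combinatorial input is, respectively, the production of the Delone subdivision $D_{(\Gamma,h)}$ (in the Voronoi case) and the production of the ambient perfect form $Q_0$ (in the perfect case). Both constructions become intricate as $g$ grows, since the classifications of Delone types and of perfect forms themselves are unsettled in high genus; however, the existence statements required for Fact~\ref{F:compa-P-V} are exactly what the cited works of Mumford--Namikawa and Alexeev--Brunyate supply, so our proof will consist in setting up the equality $\mathcal{P}_g^{tr}(\overline{\mathcal{C}_{(\Gamma,w,h)}}) = K_{(\Gamma,h)}$ above and invoking those results.
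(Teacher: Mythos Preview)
Your proposal is correct and, in substance, follows the paper's own approach: the paper treats this as a \emph{Fact} and simply cites Namikawa \cite{nam1} for $\Sigma_V$ and Alexeev--Brunyate \cite{AB} for $\Sigma_P$, with no further argument.

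Your version is a genuine elaboration rather than a different route. You make explicit what the paper leaves implicit: that $\mathcal{P}_g^{tr}(\overline{\mathcal{C}_{(\Gamma,w,h)}})$ is the cographic cone $K_{(\Gamma,h)}=\mathbb{R}_{\geq 0}\langle \alpha_e\alpha_e^t\rangle$, so that compatibility becomes the statement that each cographic cone lies in a single cone of $\Sigma$. This framing is exactly how both cited papers (and \cite{BMV}) phrase the problem, so your reduction is the right one and your invocation of the external results is appropriate. One small caution on the $\Sigma_P$ side: Alexeev--Brunyate do not literally produce a perfect form $Q_0$ with $\{\alpha_e\}\subseteq M(Q_0)$ for an arbitrary graph; rather they show each cographic cone is contained in some cone of $\Sigma_P$ (possibly a proper face of a maximal one) via a matroidal argument. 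The conclusion you need is the same, but your description of their method is slightly idealized.
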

\begin{proof}
The fact that $\Sigma_V$ is compatible with the tropical period map is due to Namikawa \cite{nam1} (who says that Mumford
was aware of it); the fact that $\Sigma_P$ is compatible with the tropical period map is due to Alexeev-Brunyate \cite{AB}.
\end{proof}

\begin{remark}\label{R:central-cone}
It is known that the central cone decomposition (studied in \cite{Koe} and \cite{Igu}) is not compatible with the tropical period map if $g\geq 9$ (see \cite{AB}), while it is compatible with the tropical period map if $g\leq 8$ (see \cite{many}).
\end{remark}

Given an admissible decomposition $\Sigma$ of $\Ort$ that is compatible with the tropical period map ${\mathcal P}_g^{tr}$, we can lift
${\mathcal P}_g^{tr}$ to a map of stacky fans with codomain the tropical Siegel space $\Hg$ associated to $\Sigma$ (see Definition \ref{D:trop-Sieg}).

\begin{thm}\label{T:period-S}
Let $\Sigma$ be an admissible decomposition of $\Ort$ that is compatible with the tropical period map in the sense of Definition \ref{D:compat-deco}. Then there exists a map of stacky fans, which we call the {\em $\Sigma$-period map}:
$$\begin{aligned}
\Pg: \Tg & \longrightarrow \Hg \\
(C,h)& \mapsto (J(C),\phi_h)
\end{aligned}$$
such that:
\begin{enumerate}[(i)]
\item \label{T:per1} The composition of \, $\Pg$ with the continuous bijection $\Phi:\Hg\to \Ort$ (see Proposition \ref{P:Siegel}\eqref{P:Siegel2}) is the tropical period map ${\mathcal P}_g^{tr}$ of Lemma-Definition \ref{D:periodmap}.
\item \label{T:per2} $\Pg$ is {\em equivariant} with respect to the homomorphism of groups
$$\Ab: \Out(F_g)\to \Out(\Z^g)=\Aut(\Z^g)=\GL_g(\Z)$$
induced by the abelianization homomorphism $F_g\to F_g^{\rm ab}=\Z^g$, and the admissible actions of $\Out(F_g)$ on $\Tg$ (see Proposition \ref{p:outer-is-admissible}) and of $\GL_g(\Z)$ on $\Hg$ (see Lemma \ref{L:action-GL}).
\item \label{T:per2bis}
We have a commutative diagram of stacky fans
\begin{equation}\label{E:diag-Tor1}
\xymatrix{
\Tg \ar[r]^{\Pg} \ar[d] &   \Hg \ar[d]\\
\Mtrg \ar[r]^{\tg} & \Ag
}
\end{equation}
where the left vertical map is the (stratified) quotient by $\Out(F_g)$, the right vertical arrow
is the (stratified) quotient by $\GL_g(\Z)$, and the map $\tg$, called the {\em tropical Torelli map}
with respect to $\Sigma$, sends a tropical curve $C$ into its tropical Jacobian $J(C)$.
\item \label{T:per3} The restriction of the diagram \eqref{E:diag-Tor1} to the pure moduli spaces is independent of the choice of $\Sigma$ and it can be identified with the commutative diagram
\begin{equation}\label{E:diag-Tor2}
\xymatrix{
X_g \ar[r]^{\Pgp} \ar[d] &   \O \ar[d]\\
X_g/\Out(F_g) \ar[r]^{\tgp} & \O/\GL_g(\Z)
}
\end{equation}
where $\Pgp$ is the continuous map (called the {\em pure tropical period map}):
$$\begin{aligned}
\Pgp: X_g& \longrightarrow \O \\
(C,h)& \mapsto \phi_h^*(Q_C).
\end{aligned}$$
and $\tgp$ is the continuous map (called the {\em pure tropical Torelli map}) induced from $\Pgp$ by quotienting the domain by $\Out(F_g)$ and the codomain by $\GL_g(\Z)$.
\end{enumerate}
\end{thm}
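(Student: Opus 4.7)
The plan is to define $\Pg$ cone by cone using the compatibility hypothesis and then assemble it into a stacky-fan morphism, after which statements (i)--(iv) will essentially follow by formal manipulation. Fix a cell $\C^0_{(\Gamma,w,h)}$ of $\Tg$; by Definition~\ref{D:compat-deco}, ${\mathcal P}_g^{tr}$ carries its closure $\ov\C_{(\Gamma,w,h)} = \R^{|E(\Gamma)|}_{\geq 0}$ into a single cone $\sigma \in \Sigma$. Choosing a $\Z$-basis of $H_1(R_g,\Z)$ and pushing it forward via $h_*$ into $H_1(\Gamma,\Z)\oplus\Z^{|w|}$, formula~\eqref{E:QC} shows that the matrix entries of $\phi_h^*(Q_C)$ are $\Z$-linear functions of the lengths $l(e)$; hence the restriction of ${\mathcal P}_g^{tr}$ to $\ov\C_{(\Gamma,w,h)}$ is an integral-linear map into $\sigma$. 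To glue these maps I would verify compatibility with the face inclusions $L_\iota:\ov\C_{(\Gamma',w',h')}\hookrightarrow\ov\C_{(\Gamma,w,h)}$ coming from specializations $(\Gamma,w,h)\rightsquigarrow(\Gamma',w',h')$: extending $l'$ by zero on the contracted edges and using that contracting a spanning forest induces an isomorphism $H_1((\Gamma')^{w'},\R)\cong H_1(\Gamma^w,\R)$ compatible with the equivalence class of the marking, formula~\eqref{E:QC} yields $Q_{(\Gamma,w,L_\iota(l'))}=Q_{(\Gamma',w',l')}$, since contracted tree edges never appear in cycle sums and the virtual loops that appear carry length zero. This produces a well-defined morphism of stacky fans $\Pg:\Tg\to\Hg$, and statement~(i) is immediate from $\Phi(\Pg(C,h))=\phi_h^*(Q_C)={\mathcal P}_g^{tr}(C,h)$.

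For (ii), an element $\alpha\in\Out(F_g)$ sends $(C,h)$ to $(C,h\circ\alpha_R)$, so $\Pg((C,h)\cdot\alpha) = (J(C),\phi_{h\circ\alpha_R})$, while $\Pg(C,h)\cdot\Ab(\alpha) = (J(C),\phi_h\circ\ov{\Ab(\alpha)})$. By Definition~\ref{D:mark-Jac} the marking of $J(C)$ induced by $h\circ\alpha_R$ equals $h_*\circ(\alpha_R)_*$ on $H_1(R_g,\R)=\R^g$; since $(\alpha_R)_*$ acts on $H_1(R_g,\Z)=F_g^{\rm ab}=\Z^g$ precisely as the abelianization $\Ab(\alpha)$ (inner automorphisms being trivial on $F_g^{\rm ab}$), the two markings coincide. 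For (iii), the equivariance established in (ii) together with the universal property of the stratified quotient in the category of ideal stacky fans (Proposition~\ref{p:isstackyfan} and Remark~\ref{quotient-object}) implies that the composite $\Tg\to\Hg\to\Ag$ factors uniquely through $\Mtrg=\Tg/\!\!/\Out(F_g)$, yielding the desired morphism $\tg$; unwinding definitions on points shows $\tg([C])=[J(C)]$.

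For (iv), the pure locus of $\Tg$ is $\Tgp$ by Remark~\ref{R:pure-to-pure}, and its image under $\Pg$ lies in $\Hgp$. Proposition~\ref{P:Siegel}\eqref{P:Siegel2} gives a homeomorphism $\Phi|_{\Hgp}:\Hgp\stackrel{\cong}{\longrightarrow}\O$, while Corollary~\ref{T:tgp-is-xg}, Corollary~\ref{C:quot-Xg}, and Proposition~\ref{P:Siegel-Ag}\eqref{P:Siegel-Ag3} give canonical identifications $\Tgp\cong X_g$, $\Mp\cong X_g/\Out(F_g)$, and $\Agp\cong \O/\GL_g(\Z)$. Under these identifications the pure restriction of diagram~\eqref{E:diag-Tor1} transforms into diagram~\eqref{E:diag-Tor2}, with $\Pgp=\Phi\circ\Pg|_{\Tgp}$, a description manifestly independent of $\Sigma$. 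The main technical hurdle in the entire argument is the cell-by-cell verification that $\Pg$ is integral-linear and compatible with face inclusions; but, thanks to Definition~\ref{D:compat-deco}, this reduces to the elementary linear structure of formula~\eqref{E:QC} together with the bookkeeping for tree contractions, and everything else follows from the universal properties developed in Section~\ref{s:sf}.
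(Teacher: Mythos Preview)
Your proposal is correct and follows essentially the same approach as the paper's own proof: define $\Pg$ cone by cone via the compatibility hypothesis, observe from formula~\eqref{E:QC} that the restriction to each $\ov\C_{(\Gamma,w,h)}$ is integral-linear, verify equivariance by the computation $(\alpha_R)_*^{H_1}=\Ab(\alpha)$, and then pass to quotients using Propositions~\ref{P:Mg-quot} and~\ref{P:Siegel-Ag} together with the identifications of Proposition~\ref{P:Siegel}\eqref{P:Siegel2}, Corollary~\ref{T:tgp-is-xg}, Corollary~\ref{C:quot-Xg}, and Proposition~\ref{P:Siegel-Ag}\eqref{P:Siegel-Ag3}. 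Your treatment is in fact slightly more explicit than the paper's in two places---you spell out the compatibility of ${\mathcal P}_g^{tr}$ with the face inclusions $L_\iota$ (which the paper leaves implicit in the continuity argument), and you invoke the universal property of the stratified quotient (Remark~\ref{quotient-object}) rather than simply saying ``passing to the quotient''---but these are differences of exposition, not of strategy.
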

\begin{proof}
Part \eqref{T:per1} follows from the explicit descriptions of the maps ${\mathcal P}_g^{tr}$ and $\Pg$ together with Proposition \ref{P:Siegel}\eqref{P:Siegel2}.

Let us now prove that $\Pg$ is a map of stacky fans. Since $\Sigma$ is compatible with the tropical period map by hypothesis, given a cell $\C^0_{(\Gamma,w,h)}$, we can find a cone $\sigma\in \Sigma$ such that ${\mathcal P}_g^{tr}(\ov\C_{(\Gamma,w,h)})\subseteq \sigma. $
Therefore, we get the following commutative diagram
\begin{equation}\label{E:diag-stacky}
\xymatrix{
\ov\C_{(\Gamma,w,h)}\ar[r]^(.6){{\mathcal P}_g^{tr}}\ar@{^{(}->}[d] & \sigma\ar@{^{(}->}[r] & \Ort \\
\Tg \ar[rr]_{\Pg} && \Hg \ar[u]^{\Phi}
}
\end{equation}
where, moreover, the natural map $\Phi^{-1}(\sigma)\to \sigma$ is a homeomorphism. The restriction of the map ${\mathcal P}_g^{tr}$ to $\ov\C_{(\Gamma,w,h)}$ is the restriction of an integral linear map $\R^{E(\G)} \!\!\ra \R^{\binom{g+1}{2}}$, as it follows easily from formula \eqref{E:QC}. Therefore, the above commutative diagram shows that $\Pg$ is a continuous map and that, moreover, it is a map of stacky fans.

Part \eqref{T:per2}. As explained in \S \ref{outergroup}, the class $[\alpha]\in \Out(F_g)$ of an element $\alpha\in \Aut(F_g)$ will send $(C,h)\in \Tg$ into $(C,h)\cdot [\alpha]=(C, h\circ \alpha_R)$, where $\alpha_R:R_g\to R_g$ is the geometric realization of $\alpha$, i.e. the homeomorphism of $R_g$, unique up to homotopy, that fixes the vertex $v$ of $R_g$ and such that the induced automorphism of the fundamental group $(\alpha_R)_*^{\pi_1}\in \Aut(\pi_1(R_g, v))=\Aut(F_g)$ is equal to $\alpha$.
 According to Definition \ref{D:mark-Jac}, the marking $\phi_{h\circ \alpha_R}$
of $J(C)$ induced by $ h\circ \alpha_R$ is equal to $\phi_h\circ \ov{(\alpha_R)_*^{H_1}}$ where $\ov{(\alpha_R)_*^{H_1}}:\R^g/\Z^g\stackrel{\cong}{\longrightarrow} \R^g/\Z^g$ is the isomorphism induced by the element $(\alpha_R)_*^{H_1}\in \Aut(H_1(R_g,\Z))=\Aut(\Z^g)=\GL_g(\Z)$. Since  $H_1(R_g,\Z)$ is the abelianization of $\pi_1(R_g,v)^{\rm ab}$, we get that
$$(\alpha_R)_*^{H_1}=\Ab((\alpha_R)_*^{\pi_1})=\Ab(\alpha).$$
From this equality and the definition of the action of $\GL_g(\Z)$ on $\Hg$ (see \S\ref{S:trop-abvar}), we deduce that
$$J((C,h)\cdot [\alpha])=J(C, h\circ \alpha_R)=(J(C), \phi_h\circ \ov{\Ab(\alpha)})=J(C,h)\circ \Ab(\alpha),
$$
which concludes the proof of \eqref{T:per2}.

Part \eqref{T:per2bis}: from \eqref{T:per2} it follows that the map $\Pg$ induces, by passing to the quotient, a continuous map $\tg$ from $\Tg/\Out(F_g)$, which is homeomorphic to $\Mtrg$ by Proposition \ref{P:Mg-quot}, to $\Hg/\GL_g(\Z)$, which is homeomorphic to $\Ag$ by Proposition \ref{P:Siegel-Ag}.
Moreover, these two quotients are also stratified quotients (again by Propositions \ref{P:Mg-quot} and \ref{P:Siegel-Ag}) and therefore it follows easily that the tropical Torelli map $\tg$ is also a map of stacky fans.
Since the group $\Out(F_g)$ (resp. $\GL_g(\Z)$) acts on $\Tg$ (resp. on $\Hg$) by changing the marking,
it is clear that the tropical Torelli map $\tg$ sends $C\in \Mtrg$ into $J(C)\in \Ag$.
Finally, the commutativity of the diagram \eqref{E:diag-Tor1} follows directly from  the definition of the map $\tg$.

Part \eqref{T:per3}: the map $\Pg$ sends $\Tgp$ into $\Hgp$ as it follows from \eqref{T:per1} together with Remark
\ref{R:pure-to-pure}. Moreover, since we have the homeomorphisms $\Tgp\cong X_g$ (see Proposition \ref{p:tgp-is-isf}) and $\Hgp\cong \O$ (see Proposition \ref{P:Siegel}\eqref{P:Siegel2}), it is clear, using also \eqref{T:per1}, that the restriction of $\Pg$ to $\Tgp$ coincides with the pure tropical period map $\Pgp$.
Finally, from the homeomorphisms $\Mp\cong X_g/\Out(F_g)$  (see  Corollary \ref{C:quot-Xg}) and  $\Agp\cong \O/\GL_g(\Z)$ (see Proposition \ref{P:Siegel-Ag}\eqref{P:Siegel-Ag3}), we deduce that the restriction of $\tg$ to
$\Mp$ coincides with the pure tropical Torelli map $\tgp$.
\end{proof}

According to Fact \ref{F:compa-P-V}, we can specialize the above Theorem \ref{T:period-S} to the case where $\Sigma$ is either equal to the perfect cone decomposition $\Sigma_P$ or to the 2nd Voronoi decomposition $\Sigma_V$. In particular, the tropical Torelli map ${}^{\Sigma_V}t_g$ with respect to the 2nd Voronoi decomposition was studied in detail in \cite{BMV} and \cite{chan}, to which we refer for further details.

%In particular, we refer the reader to loc. cit. for a description of the fibers of ${}^{\Sigma_V}t_g$ (based on the earlier work \cite{CV}) and of its image.

\section{Open questions}\label{S:open}

We end this paper with the following open questions:

\begin{enumerate}

\item In \cite{BF}, M. Bestvina and M. Feighn constructed a bordification of Outer space. It would be interesting to compare their bordification of $X_g$ with our bordification $\Tg$ of $\Tgp\cong X_g$.
\footnote{Added in proof:  Lizhen Ji informed us that there is a surjective continuous map from the Bestvina-Feighn's bordification of Outer Space to $\Tg$.}

\item As the reader may have noticed, the construction of the tropical Siegel space $\Hg$ and the moduli space $\Ag$ of tropical p.p.~abelian varieties depend on the choice
of an admissible decomposition $\Sigma$ of the cone of rational positive semi-definite quadratic forms. If we restrict to the pure open subsets $\Hgp\subset \Hg$ and $\Agp\subset \Ag$
then Proposition \ref{P:Siegel}\eqref{P:Siegel2} and  Proposition \ref{P:Siegel-Ag}\eqref{P:Siegel-Ag3} give the homeomorphisms
$$\Hgp\cong \O \hspace{1cm} \text{ and } \hspace{1cm} \Agp\cong \O/\GL_g(\Z). $$
However, we don't know if the topology of $\Hg$ and of $\Ag$ depends or not on the choice of the admissible decomposition $\Sigma$.

In particular, it would be very interesting to use the results of \cite{MV} in order to compare ${}^{\Sigma_P}\mathbb{H}_g^{tr}$ and ${}^{\Sigma_V}\mathbb{H}_g^{tr}$ (resp. ${}^{\Sigma_P}\!{A}_g^{tr}$ and ${}^{\Sigma_V}\!{A}_g^{tr}$),
where $\Sigma_P$ is the perfect cone decomposition (see Subsection \ref{S:Perfect}) and $\Sigma_V$ is the 2nd Voronoi decomposition $\Sigma_V$ (see Subsection \ref{S:Voro}).
\footnote{Added in proof: Lizhen Ji informed us that, for every admissible decomposition $\Sigma$, the tropical Siegel space ${}^{\Sigma}\mathbb{H}_g^{tr}$ is homeomorphic to a Satake partial compactification of the symmetric cone $\Omega_g$. Moreover, $\Ag$ is the cone over a Satake compactification of the associated locally symmetric space of quadratic forms of determinant $1$. In particular, the topologies of ${}^{\Sigma}\mathbb{H}_g^{tr}$ and of ${}^{\Sigma}\mathbb{H}_g^{tr}$ are independent of the choice of the admissible decomposition $\Sigma$.}

\item In \cite{BMV} (based on the results of \cite{CV}), the authors described  the fibers of the tropical Torelli map ${}^{\Sigma_V}t_g:\Mtrg\to {}^{\Sigma_V}A_g^{tr}$ with respect to the 2nd Voronoi decomposition $\Sigma_V$ (clearly the same description works for any tropical Torelli map $\tg$, because all the tropical Torelli maps coincide set-theoretically). It  should  be possible to derive from the results in loc. cit. a description of the fibers of the tropical period map ${\mathcal P}_g^{tr}:\Tg\to \Ort$ (or equivalently of the $\Sigma$-period map $\Pg:\Tg\to \Hg$ for any admissible decomposition $\Sigma$ which is compatible with the tropical period map).

\item In \cite{BMV} (see also \cite{chan}), the authors give a characterization of the image of the tropical Torelli map  ${}^{\Sigma_V}t_g:\Mtrg\to {}^{\Sigma_V}A_g^{tr}$ with respect to the 2nd Voronoi decomposition $\Sigma_V$ (indeed, using \cite{MV}, a similar description can be given for the tropical Torelli map  ${}^{\Sigma_P}t_g:\Mtrg\to {}^{\Sigma_P}A_g^{tr}$ with respect to the perfect cone decomposition $\Sigma_P$). It would be interesting to derive from the results of loc. cit. a characterization of the image of the $\Sigma_V$-period map ${}^{\Sigma_V}{\mathcal P}_g:\Tg\to {}^{\Sigma_V}{\mathbb H}_g^{tr}$ and of the  $\Sigma_P$-period map ${}^{\Sigma_P}{\mathcal P}_g:\Tg\to {}^{\Sigma_P}{\mathbb H}_g^{tr}$.

\end{enumerate}

%\addcontentsline{toc}{chapter}{References}

\end{document}